\documentclass[11pt, reqno]{amsart}
\usepackage[T1]{fontenc}
\usepackage[utf8]{inputenc}
\usepackage{geometry}
\usepackage{newunicodechar}
\newunicodechar{̈}{\"{}}
\newunicodechar{̆}{\u{}}
\usepackage[backend=biber,style=numeric-comp,sorting=nyt, url=false,maxnames=5,minnames=1]{biblatex}
\defbibheading{bibliography}{\section*{References}} % Remove the heading of bibliography

\AtEveryBibitem{% Clean up the bibtex rather than editing it
 \clearlist{address}
 \clearfield{date}
 \clearfield{eprint}
 \clearfield{isbn}
 \clearfield{issn}
 \clearlist{location}
 \clearfield{month}
 \clearfield{series}
 \clearfield{doi}
 
 \ifentrytype{book}{}{% Remove publisher and editor except for books
  \clearlist{publisher}
  \clearname{editor}
 }
}

\usepackage{listings}
\usepackage{faktor}
\usepackage{color}
\usepackage{hyperref}
\usepackage{acro}
\usepackage{amsthm}
\usepackage{amssymb}
\usepackage{subcaption}
\usepackage{cancel}
\usepackage{tikz}
\usepackage{amsmath}
\usepackage{etoolbox}
\usepackage{bm}
\usepackage{tikz-cd}
\usepackage{pgfplots}
\pgfplotsset{compat=1.18}
\usepackage{mathtools}
\usepackage{dsfont}
\usepackage{graphicx}
\usetikzlibrary{arrows.meta}
\usepackage[T1]{fontenc} % Output font encoding for international characters

\newcommand{\R}{\mathbb{R}} %R insieme numeri reali
\newcommand{\normone}[1]{\left\|#1\right\|_{L^\infty(0,T;L^2(\Omega))}}
\newcommand{\normtwo}[1]{\left\|#1\right\|_{L^\infty(0,T;H^1(\Omega))}}
\newcommand{\normthree}[1]{\left\|#1\right\|_{L^\infty((0,T)\times\Omega)}}

\newtheorem{theorem}{Theorem}[section]
\newtheorem*{theorem*}{Theorem} 
\newtheorem{corollary}{Corollary}[section]
\newtheorem*{corollary*}{Corollary}
\newtheorem{proposition}{Proposition}[section]

\theoremstyle{definition}

\newtheorem*{definition*}{Definition}
\theoremstyle{remark}
\newtheorem{remark}{Remark}[section]

\numberwithin{equation}{section}
\newcommand{\bu}{{\mathbf u}}

\newcommand{\cC}{\mathcal{C}}
\usepackage{enumitem}
\begin{document}

\title[Vanishing viscosity limit  with anisotropic viscosity]{The Vanishing Viscosity Limit and Boundary Layers for Symmetric Fluid Flows with Anisotropic Viscosity}

\author[V. Galbiati]{Valentina Galbiati}
\email{vgalbiati@bcamath.org}
\address{BCAM, Basque Center for Applied Mathematics, Bilbao 48009, Spain}

\author[A. Mazzucato]{Anna Mazzucato}
\email{alm24@psu.edu}
\address{Department of Mathematics, Penn State University, University Park (PA) 16802, USA}

\author[R. Montalto]{Riccardo Montalto}
\email{riccardo.montalto@unimi.it}
\address{Department of Mathematics, Universit\' a degli Studi di Milano, Milano 20133, Italy}

\date{\today}

\begin{abstract}
    We study the vanishing viscosity limit for the incompressible Navier-Stokes equations with anisotropic viscosity in bounded domains, analyzing certain classes of symmetric flows: plane parallel, pipe parallel and circularly symmetric. By anisotropic viscosity, it is meant here that the viscosity coefficient in the direction normal to the wall is different than that in the direction tangential to the wall. Using boundary layer theory and semigroup techniques, we establish the validity of the vanishing viscosity limit in the energy norm, that is, in $L^2$ in space uniformly in time, for all three classes of flows,  with explicit convergence rates. We further obtain higher-order estimates under suitable assumptions on the anisotropic viscosity coefficients. In particular, we consider both the case in which the tangential viscosity coefficient goes to zero faster than the normal one and, conversely, the case when the normal coefficient vanishes faster then the tangential one. Our results extend previous works on isotropic viscosity and provide new examples where the vanishing viscosity limit can be rigorously justified in the anisotropic setting.
\end{abstract}

\keywords{incompressible flows, pipe flow, channel flow, anisotropic viscosity, zero-viscosity limit, correctors, semigroup}

\subjclass{76D10,35B25,47D05}

\maketitle

\section{Introduction} \label{s:intro}
%\addcontentsline{toc}{section}{Introduction}

This article concerns the limit of vanishing viscosity for incompressible Newtonian flows with anisotropic viscosity in bounded domains. 
By anisotropic viscosity we mean that near the boundary the viscosity coefficients depends on direction, and more specifically that the viscosity in directions tangent to the boundary is different than the viscosity in the direction normal to the boundary. Such a model is justified physically in several context. If the viscosity coefficient arises from the mean-free path of fluid particles, than it is reasonable to assume that the mean-free path is shorter in the direction normal to the wall, which justifies taking the normal viscosity smaller than the tangential one. Such a situation also arises in geophysical flows due to stratification (see e.g. \cite{CheminEtAll2000}). On the other hand,  a normal viscosity larger than the tangential one can arise if the flow and its container are subject to external forces, such as centrifugal forces due to strong rotation or the action of a strong magnetic field in ionized fluids.

In the isotropic case, the dynamics of viscous incompressible flow in a domain $\Omega\subset \R^d$, $d=2,3$, is described by the Navier–Stokes equations (NSE):
\begin{equation}
\label{NSeq}
\partial_t \mathbf{u} + (\mathbf{u}\cdot \nabla)\mathbf{u} - \nu \Delta \mathbf{u} + \nabla p = \mathbf{f},
\quad \operatorname{div}\mathbf{u} = 0,
\end{equation}
where $\mathbf{u}$ denotes the velocity field, $p$ the pressure, $\nu>0$ the viscosity coefficient, and $\mathbf{f}$ is an external forcing term. The system is complemented with the initial condition $\mathbf{u}^\nu(0)=\mathbf{u}^\nu_0$ and, in the case of impermeable walls, the classical no-slip boundary condition $\mathbf{u}|_{\partial \Omega}=0$, assuming that the fluid sticks to the boundary by friction.

When viscosity is small, the dynamic is formally approximated by the Euler equations for incompressible inviscid flows (EE):
\begin{equation}
\label{Eeq}
\partial_t \mathbf{u}^0 + (\mathbf{u}^0\cdot\nabla)\mathbf{u}^0 + \nabla p^0 = \mathbf{f},
\quad \operatorname{div}\mathbf{u}^0 = 0,
\end{equation}
with initial condition $\bu^0(0)=\bu_0$ and the so-called no-penetration condition $\mathbf{u}^0\cdot \mathbf{n}|_{\partial\Omega}=0$, where $\mathbf{n}$ is the outer unit normal to $\partial\Omega$. The different boundary conditions for Navier-Stokes and Euler are consistent with the different order of the equations, but lead to a potentially large discrepancy in the tangential components of the viscous and inviscid velocities.
It follows heuristically that the convergence $\mathbf{u} \to \mathbf{u}^0$ cannot hold uniformly up to the boundary. Instead, a thin region develops near $\partial \Omega$, the so-called viscous boundary layer, where viscosity cannot be neglected even as viscosity vanishes. Away from the boundary layer, one expects the viscous flow to be well approximated by inviscid flows at sufficiently low viscosity. 

We will refer to the vanishing viscosity limit  (VVL for short) as the strong convergence of Navier-Stokes solutions to Euler solutions in the energy norm, i.e., in $L^2(\Omega)$ uniformly in time:
\[
    \lim_{\nu\to 0} \|\bu^\nu-  \bu^0\|_{L^\infty(0,T;L^2(\Omega))},
\]
where $T$ is the time of existence of the Euler solutions. Given the current state of the well-posedness theory for Navier-Stokes and Euler, in the VVL $\bu^\nu$ can be taken to be a Leray-Hopf weak solution, while $\bu^0$ is regular solution.

The Navier-Stokes equations should be properly non-dimensionalized and the dimensional viscosity coefficient $\nu$ replaced by the adimensional Reynolds number $Re$, proportional to $1/\nu$, but with abuse of notation we will treat $\nu$ as an adimensional number.

Boundary layers are of both physical and mathematical importance. From a physical point of view, they are the regions where vorticity is generated and subsequently transported into the bulk of the fluid potentially causing turbulence and playing a crucial role in many applications. From a mathematical point of view, they give rise to a singular perturbation problem, the rigorous analysis of which remains one of the most challenging and open questions in fluid dynamics.

The typical mathematical approach to boundary layers is to construct an approximate solution correcting the formal limit solution (see e.g. \cite{VishikLjusternik1962}):
\begin{equation*}
    \mathbf{u}\approx \mathbf{u}^{app}=\mathbf{u^0}+\mathbf{\theta^0},
\end{equation*}
where the corrector $\mathbf{\theta^0}$ accounts for the mismatch at the boundary. In general, the form of the corrector is obtained from a formal asymptotic analysis after suitably rescaling variables so that the boundary layer becomes of size independent of $\nu$. Energy methods and corrector estimates are then employed to prove convergence, if it holds. 
A limitation of this method, however, is that it relies on a decomposition imposed {\em a priori}, and on assumptions about the behavior of the corrector at the boundary. A related, but different approach, approximates the Navier-Stokes solution as an outer solution, valid away from the boundary layer and typically the limit Euler solution, and an inner solution valid in the boundary layer \cite{VishikLjusternik1962}. After rescaling, the inner solution is found to have to satisfy the Prandtl equations , which however are known to be ill-posed without strong {\em a priori} hypotheses on the data, and hence of limited applicability in a rigorous study \cite{Grenier2000,GrenierGuoNguyen2016,GuoNguyen2011,GerardVaretDormy2010,EEnquist1997,KukavicaVicolWang2017,VanDommelenShen1980,GarganoSammmartinoSciacca2009} (we refer the reader also to the recent book \cite{QinDongWangBook2024}).

There is an extensive literature concerning boundary layers and Prandtl equations under various types of boundary conditions for incompressible flows (we refer the reader to the classic references \cite{LighthillBook1963,schlichting1961boundary,OleinikSamokhinBook1998} and the recent book \cite{gie2018singular}, along with survey articles such as \cite{BardosTiti2007,BardosTiti2013,inviscid,MaekawaSurvey2019,GieJungTemam2016}. The case of non characteristic boundaries, slip-with-friction or vorticity boundary conditions is better understood. Since these situations are not the focus of the article, we refer the reader to the discussion in 
\cite{BeiraodaVeigaCrispo2023} for friction conditions and in \cite{MazzucatoWangWei2026} for inflow/outflow conditions.

The case of no-slip boundary conditions remains the most difficult and is still far from being fully solved in general. Rigorous results are available only in special settings: in the analytic setting, using the Cauchy-Kowaleski Theorem \cite{Asano1988}, and more recently in suitable Gevrey classes
\cite{SammartinoCaflisch1998.a,SammartinoCaflisch1998.b,NguyenNguyen2018,GerardVaretEtAll2018,WangWangZhang2024,LiWang2020,PaicuZhang2021,IgnatovaVicol2016,ZhangZhang2016,WangWangZhang2017,KukavicaVicol2013,GerardVaretMasmoudi2015, Maekawa2014,NguyenNguyen2024,Maekawa2021,KukavicaVicolWang2022,BardosEtAll2022,Fei2020, CannoneLombardoSammartino2014}, under a monotonicity condition or other special structure \cite{AlexandreEtAl2015,Oleinik1963,GerardVaretEtAll2018,GuoNguyen2011,MasmoudiWong2018,QinWang2025,KukavicaEtAll2014,FeiTaoZhang2018},  for certain classes of flows with symmetry \cite{lopes2008vanishing,mazzucato2008vanishing,mazzucato2010vanishing,BonaWu2002,LopesMazzucatoNussenzveigLopes2008,plane,GIE20191237,han2012boundary}, and for linearized flows \cite{2018JMFM...20.1405G,Gie2014,TemamWang1995,TemamWang1996,TemamWang1998,LombardoSammartino2001}. Convergence criteria, such as the well-known Kato criterion \cite{Wang2001}, also provide conditions equivalent to the validity of the limit, but they are not known to hold for generic flows, not even for short time (we mention, among the extensive literature, \cite{Kelliher2007,kelliher2008vanishing,kelliher2017observations,Kelliher2023,ConstantinEtAl2019,ConstantinKukavicaVicol2015,ConstantinVicol2018,ConstantinEtAll2017,BardosTiti2007,BardosTitiWiedemann2019,BardosTiti2013,DrivasNguyen2018,DrivasNguyen2019,SeisEtAll2026,ChenEtAl2022,Wang2001,KukavicaVicolWang2022}). There are also rigorous bounds on the possible rate of separation between Euler and Navier-Stokes \cite{VasseurYang2023,VasseurYang2024}.

In the context of isotropic symmetric flows, semigroup theory has been applied to establish the vanishing viscosity limit \cite{lopes2008vanishing,mazzucato2008vanishing,mazzucato2010vanishing}. This approach is particularly interesting since it avoids the introduction of correctors and, in general, requires weaker assumptions on the initial data. By symmetric flows, we mean here exact solutions of the Navier-Stokes and Euler equations that preserve a certain symmetry imposed on the initial data. The vanishing viscosity limit has been established for so-called pipe and channel flows in periodic pipes and channels. These are three-dimensional flows, for which however the dynamic of the flow is captured by a reduced two-by-two linear system for the velocity weakly coupled non-linearly to the other component of the velocity. The symmetry of the flow maintains the flow laminar and prevents layer separation. Pipe flows reduce to circularly symmetric flows in the planar case. They can be viewed as generalizations of the classical Couette and Poiseulle flows and are of numerical and, even, experimental interest (cf. turbulent channel and pipe flow simulations, see e.g. \cite{LeeMoser2015} and references therein).

In this article, we study the vanishing viscosity limit and the boundary layer in the same classes of symmetric flows, but for anisotropic flows. In this framework, the usual isotropic viscous term $\nu \Delta$ is replaced by a more general elliptic operator $L_{\nu}$, where the viscosity depends on the direction. In the anisotropic case, the validity of the vanishing viscosity limit may depend crucially on the ratio between the tangential and the normal viscosity coefficients at the boundary.

There is a significantly less developed literature concerning the vanishing viscosity limit for anisotropic viscosity with no-slip boundary conditions. The first general result available in the literature is due to Masmoudi \cite{masmoudi1998euler} (1998), who proved that the vanishing viscosity limit holds for the anisotropic system if also the ratio of normal to tangential viscosity tends to zero in the channel or in the half space. A successive work of Phan and Valdebenito \cite{nonflat} (2022) shows the validity of the limit in a curved domain with the same constraint on the coefficients (see also the recent preprint \cite{GoodairArXiV2026} in the stochastic setting).  Other works have considered instead the so-called vanishing vertical viscosity limit, in which the horizontal viscosity is kept fixed while only the vertical component vanishes \cite{TAO20184283, cao2025vanishing, an3d}.

The purpose of this work is to give examples of flows where the vanishing viscosity limit and the boundary layer behavior can be rigorously proved under general conditions on the anisotropy of the viscosity, in particular in the case in which the tangential viscosity is much smaller than the normal viscosity.   In the isotropic setting, the cases with symmetries have been analyzed in literature using different methods, such as correctors and energy estimates or semigroup analysis. Correctors allow to prove convergence in higher norm for well-prepared data. In the anisotropic setting, we will establish  the vanishing viscosity limit in the energy norm $L^\infty L^2$ without assuming any condition on the tangential and normal viscosity. We will also provide an analysis of the convergence in higher norm, imposing some constraints on the viscosity coefficients. 

%inserire struttura articolo
 The article is organized as follows. In Section \ref{s:plane} we prove the VVL in the plane parallel case using correctors, assuming suitable conditions on the ratio of normal to tangential viscosity. We also establish bounds on the $L^\infty H^1$, $L^\infty L^\infty$ norm and discuss the vorticity equation. In Section \ref{s:pipe} we prove the VVL in the pipe parallel case, again using correctors, discussing in particular the modifications required by the curved boundary. In Section \ref{s:semigroup} we use semigroup theory to show the validity of the limit in the circularly symmetric case with no assumptions on the relative strength of normal versus tangential viscosity.

\section{Plane Parallel Channel Flow} \label{s:plane}
In this section, we study the vanishing viscosity limit under so-called plane-parallel symmetry in an infinitely long channel, which we take to be horizontal and of unit width without loss of generality, with periodicity in the horizontal directions.
We therefore pose the equations in the  spatial domain $\cC=[0,L]\times[0,L]\times[0,1]$, where $L$ is the horizontal period.
The velocity is assume to have the form (see figure \ref{fig:channel-boundary-layer}):
\begin{equation} \label{eq:planeSymmetry}
    \mathbf{u}(t,x,y,z)=(u_1(t,z), u_2(t,x,z),0),
\end{equation}
which is automatically divergence free and satisfying the no penetration condition at the channel walls. We impose that the initial condition and the forcing term have the compatible symmetry:
\begin{equation*}
    \mathbf{u}|_{t=0}=\mathbf{u}_0(x,y,z)=(a(z),b(x,z),0),
\end{equation*}
\begin{equation*}
    \mathbf{f}=(f_1(t,z), f_2(t,x,z),0).
\end{equation*}
Then solutions of the Navier-Stokes and Euler equations  necessarily satisfy \eqref{eq:planeSymmetry}, at least when sufficiently regular as it will be the case here.
 
\begin{figure}[ht]
\centering
\begin{tikzpicture}[scale=3.5]

  % Canale
  \draw[line width=2] (0,0) rectangle (3,1);

  % Linee tratteggiate per boundary layer
  %\draw[dashed, thick] (0,0.1) -- (3,0.1);
  %\draw[dashed, thick] (0,0.9) -- (3,0.9);

  % Frecce nere grosse (linee di flusso)
  \foreach \z in {0.25, 0.5, 0.75} {
    \foreach \x in {0.2, 0.5, 0.8, 1.1, 1.4, 1.7, 2.0, 2.3, 2.6} {
      \draw[->, -{latex}, line width=0.25pt] (\x,\z) -- ++(0.2,0);
    }
  }

  % Testo al centro
  \node at (1.4,0.62) {\footnotesize $\mathbf{u} = \left(u_1(t;z), u_2(t;x,z), 0\right)$};

  % Etichette coordinate
  \node[left] at (0,1) {\footnotesize $z = 1$};
  \node[left] at (0,0) {\footnotesize $z = 0$};
  \node[below right] at (2.5,0) {\footnotesize $(x,y) \in [0,L]^2$};

  % Etichette boundary layers con frecce
  %\draw[->, thick] (1.8,0.88) -- (1.8,1.06);
  %\node[above] at (1.8,1.05) {\footnotesize Upper boundary layer (near $z=1$)};

  %\draw[->, thick] (1.6,0.12) -- (1.6,-0.15);
  %\node[below] at (1.6,-0.15) {\footnotesize Lower boundary layer (near $z=0$)};

\end{tikzpicture}
\caption{Parallel flow in a channel}
\label{fig:channel-boundary-layer}
\end{figure}

We consider the case of anisotropic viscosity. Hence, the scalar viscosity coefficient is replaced by a matrix-values coefficient in the dissipative term in the Navier-Stokes equations. That is, the term  $\nu\Delta$ in \eqref{NSeq} is replaced by the operator\ $\nu_1\partial_{xx}+\nu_2\partial_{yy}+\nu_3\partial_{zz}$, where $\nu_1,\nu_2,\nu_3$ are the viscosity respectively along the $x,y,z$ directions. 
Imposing plane parallel symmetry yields the following system of equations:
\begin{equation}
\label{NSEPP}
    \begin{cases}
        \partial_tu_1^\nu+\partial_xp^\nu-\nu_3\partial_{zz}u^\nu_1=f_1  & \text{ on } \cC\times(0,T)\\
        \partial_tu^\nu_2+u_1^\nu\partial_xu_2^\nu+\partial_yp^\nu-\nu_1\partial_{xx}u_2^\nu-\nu_3\partial_{zz}u_2^\nu=f_2 & \text{ on } \cC\times(0,T),\\
        \partial_zp^\nu=0 &\text{ on } \cC\times(0,T), \\
        u_1^\nu=u_2^\nu=0 & \text{ at } \partial \cC\times(0,T),\\
        u_1^\nu|_{t=0}=a(z) & \text{ on }  \cC,\\
        u_2^\nu|_{t=0}=b(x,z) & \text{ on }  \cC,  
    \end{cases}
\end{equation}
where all the unknowns are periodic in the horizontal directions.

We observe that the pressure $p^\nu$ may be chosen to vanish identically. Indeed, by differentiating the first equation respects $x$ and the second respect $y$, we obtain that $\partial_{xx}p^\nu=\partial_{yy}p^\nu=0$. As we are assuming periodic boundary conditions, we find that $p^\nu=p^\nu(z,t)$, but also $\partial_zp^\nu=0$, thus we can assume $p^\nu=0$. 
Since the system (\ref{NSEPP}) is independent to $\nu_2$,  the analysis of the vanishing viscosity limit  and the boundary layer is independent of the viscosity coefficient in the $y$ direction. 
Formally taking the limit for $\nu_1,\nu_2,\nu_3 \to 0$ gives the system:
\begin{equation}
\label{EEPP}
    \begin{cases}
        \partial_tu_1^0=f_1  &  \text{ on } \cC\times(0,T),\\
        \partial_tu^0_2+u_1^0\partial_xu_2^0=f_2 &  \text{ on } \cC\times(0,T),\\
        u_1^0|_{t=0}=a(z) & \text{ on } \cC,\\
        u_2^0|_{t=0}=b(x,z) & \text{ on } \cC.
    \end{cases}
\end{equation}
We impose also the compatibility conditions:
\begin{equation}
    \begin{aligned}
    \label{CC1}
        \mathbf{u}_0(x,y,i)&=0, \quad \\
        \nu_3\partial_{zz}a(i)+f_1&(0,i)=0,\\
        a(i)\partial_xb(x,i)=\nu_1\partial_{xx}b(x,i)+&\nu_3\partial_{zz}b(x,i)+f_2(0,x,i),
    \end{aligned}
\end{equation} 
for $i=0,1$ that prevent the formation of an initial layer in the NSE evolution due to the difference in boundary values between the initial data and the fluid velocity for positive time. The initial layer can be treated by introducing further correctors (see e.g. the discussion in \cite{gie2018singular}).

 From now on, since the reduced system is independent from $y$, we pose the problem in $\Omega=[0,L]\times[0,1]$ and we set:
\begin{equation*}
\begin{aligned}
    \mathbf{u}^\nu(t,x,z)&:=(u_1(t,z), u_2(t,x,z)), \\
    \mathbf{u}^0(t,x,z)&:=(u_1^0(t,z), u_2^0(t,x,z)), \\
    \mathbf{u}_0(t,z,x)&:=(a(z),b(x,z)),\\
    \mathbf{f}(t,x,z)&:=(f_1(t,z), f_2(t,x,z)),
\end{aligned}
\end{equation*}
where $\mathbf{u}^\nu$ is the solution of the system (\ref{NSEPP}), $\mathbf{u}^0$ of (\ref{EEPP}) and $\mathbf{u}_0$ is the initial data. We stress that $\mathbf{u}^\nu=\mathbf{u}^{\nu_1,\nu_3}$, but we don't make explicit this dependence below. All  functions are assumed periodic in the $x$ variable and boundary conditions will be given only at $z=0$ and $z=1$. 
%We observe that all the results will be shown in $\Omega$, but by the independence of the system from the $y$ coordinate we can extend all of them in the physical domain $Q$.

 Thanks to the weak coupling in the systems (\ref{NSEPP}) and (\ref{EEPP}), well-posedness is established using standard methods, similarly to the isotropic case (see e.g. \cite{mazzucato2010vanishing} and references therein). %as in \cite{evans2022partial}.
%We observe that (\ref{EEPP}) consists of an ordinary differential equation and a  transport equation, so the solution is regular if the initial data are regular enough. If we assume 
Assuming $\mathbf{u}_0\in H^m(\Omega)$, $m>5$, $\mathbf{f}\in C^\infty([0,T]\times\Omega)$ for simplicity, and the compatibility conditions, one has that $\mathbf{u}^0 \in C^0(0,T;H^m(\Omega)) \cap C^1(0,T;H^{m - 1}(\Omega)) $ and $\mathbf{u}^\nu \in C^0(0,T;H^m(\Omega)) \cap C^1(0,T;H^{m - 2}(\Omega)) $ and hence $\mathbf{u}^0, \mathbf{u}^\nu$ are classical solutions. %\Annacomment{both solutions are regular, I do not see why the viscous solution should be less regular}
%then  the solution $\mathbf{u}^0 \in C^1(0,T;H^m(\Omega))$, hence a classical solution of the problem. Concerning $\mathbf{u}^\nu$, using results in \cite{TEMAM198273} we have that: if $\mathbf{u}_0\in H^m(\Omega)$, $m>5$, $\mathbf{f} \in C^\infty([0,T]\times\Omega)$ and the compatibility conditions (\ref{CC1}) hold then $\mathbf{u}^\nu \in C(0,T;H^m(\Omega)\cap H^1_0(\Omega))$.

 Due to the mismatch in the boundary conditions of $\mathbf{u}^\nu$ and $\mathbf{u}^0$, a strong boundary layer forms that we proceed to analyze using correctors.
 
\subsection{Boundary Layer Analysis} \label{ss:PlaneBoundaryLayer}
Following the approach in \cite{VishikLjusternik1962}, we define an approximate solution $\bu^{app}$ of the viscous problem as follows: 
\begin{equation}
\label{APP}
    \mathbf{u}^{app}(t,x,z):=\mathbf{u}^{ou}(t,x,z)+\bm{\theta}^0\left(t,x,\frac{z}{\sqrt{\nu_3}}\right)+\bm{\theta}^{u,0}\left(t,x,\frac{1-z}{\sqrt{\nu_3}}\right),
\end{equation}
where $\mathbf{u}^{ou}$ is the {\em outer solution}, which is expected to represent the fluid velocity outside the boundary layer and $\bm\theta^0$, $\bm\theta^{u,0}$ are the {\em correctors} respectively near the lower ($z=0$) and the upper ($z=1$) wall of the channel. The role of the correctors is primarily to compensate for the difference between the tangential components of the viscous and inviscid velocities near the boundaries. 
A formal scaling analysis suggests the following form for the correctors:
\begin{equation*}
\begin{aligned}
    \bm\theta^0\left(t,x,\frac{z}{\sqrt{\nu_3}}\right)&:=\left(\theta_1^0\left(t,\frac{z}{\sqrt{\nu_3}}\right),\theta_2^0\left(t,x,\frac{z}{\sqrt{\nu_3}}\right)\right),\\
        \bm\theta^{u,0}\left(t,x,\frac{1-z}{\sqrt{\nu_3}}\right)&:=\left(\theta_1^{u,0}\left(t,\frac{1-z}{\sqrt{\nu_3}}\right),\theta_2^{u,0}\left(t,x,\frac{1-z}{\sqrt{\nu_3}}\right)\right),
        \end{aligned}
\end{equation*}
with $\bm\theta^0$, $\bm\theta^{u,0}$ independent of $\nu$. 
We therefore introduce the {\em stretched} variables $Z=\frac{z}{\sqrt{\nu_3}}$ and $Z^u=\frac{1-z}{\sqrt{\nu_3}}$ and we define $\Omega_{\infty}:=[0,L]\times[0,\infty)$ as the spatial domain for the correctors $\bm\theta^0(t,x,Z)$ and $\bm\theta^{u,0}(t,x,Z^u)$. 

For ease of notation, throughout the article we avoid indicating explicitly the dependence on $\nu$ except for the Navier-Stokes solution.

 We assume that the correctors satisfy the following matching conditions:
\begin{equation*}
    \theta_i^0\to0 \quad \textit{as} \quad Z\to\infty, \quad \theta_i^{u,0}\to0 \quad \textit{as} \quad Z^u\to\infty, \quad i=1,2.
\end{equation*}
These conditions encode the assumption that the approximate solution should be close to the outer solution outside of the layer.

We insert (\ref{APP}) into (\ref{NSEPP}) to derive equations for the correctors:
\begin{equation*}
\begin{aligned}
  (a)\quad&\partial_tu_1^{ou}+\partial_t\theta_1^0+\partial_t\theta_1^{u,0}-\nu_3\partial_{zz}u_1^{ou}-\partial_{ZZ}\theta_1^0-\partial_{Z^uZ^u}\theta_1^{u,0}=f_1,\\     (b)\quad&\partial_tu_2^{ou}+u_1^{ou}\partial_xu_2^{ou}+u_1^{ou}\partial_x\theta_2^0+u_1^{ou}\partial_x\theta_2^{u,0}+\partial_t\theta_2^0+\partial_t\theta_2^{u,0}+\theta_1^0\partial_xu_2^{ou}\\       &+\theta_1^0\partial_x\theta_2^0+\theta_1^0\partial_x\theta_2^{u,0}+\theta_1^{u,0}\partial_xu_2^{ou}+\theta_1^{u,0}\partial_x\theta_2^0+\theta_1^{u,0}\partial_x\theta_2^{u,0}-\nu_1\partial_{xx}u_2^{ou}\\
        &-\nu_1\partial_{xx}\theta_2^0-\nu_1\partial_{xx}\theta_2^{u,0}-\nu_3\partial_{zz}u_2^{ou}-\partial_{ZZ}\theta_2^0-\partial_{Z^uZ^u}\theta_2^{u,0}=f_2.
        \end{aligned}
\end{equation*}
We now take the outer solution to agree with the Euler solution, that is, we assume that $\bu^{ou}$ satisfies the system: 
    \begin{equation*}
    (1)\begin{cases}
        \partial_tu_1^{ou}=f_1,\\
        \partial_tu_2^{ou}+u_1^{ou}\partial_xu_2^{ou}=f_2.
        \end{cases}
    \end{equation*}
 with initial data $\bu^{ou}(0)=\bu_0$. By uniqueness, $\bu^{ou}=\bu^0$. Next, we observe that due to the regularity of the Euler solution, by Taylor expanding near the walls we can replace $\bu^0$ by its trace at the wall, as long as it is not differentiated in the direction normal to the walls. Then dropping the lower-order terms in $\nu_1$ and $\nu_3$  gives:
    \begin{equation}
    (2)\begin{cases}
        \partial_t\theta_1^0-\partial_{ZZ}\theta_1^0=0,\\
\partial_t\theta_2^0+\theta_1^0\partial_xu_2^0(t,x,0)+\theta_1^0\partial_x\theta_2^0+u_1^0(t,0)\partial_x\theta_2^0-\partial_{ZZ}\theta_2^0=0, \quad \quad \quad \quad \quad\\
        (\theta_1^0,\theta_2^0)|_{Z=0}=(-u_1^0(t,0),-u_2^0(t,x,0)),\\
        (\theta_1^0,\theta_2^0)|_{Z=\infty}=(0,0),\\
        (\theta_1^0,\theta_2^0)|_{t=0}=(0,0).
        \end{cases}
    \end{equation}
    \begin{equation}
    (3)\begin{cases}
        \partial_t\theta_1^{u,0}-\partial_{Z^uZ^u}\theta_1^{u,0}=0,\\
\partial_t\theta_2^{u,0}+\theta_1^{u,0}\partial_xu_2^0(t,x,1)+\theta_1^{u,0}\partial_x\theta_2^{u,0}+u_1^0(t,1)\partial_x\theta_2^{u,0}-\partial_{Z^uZ^u}\theta_2^{u,0}=0,\\
        (\theta_1^{u,0},\theta_2^{u,0})|_{Z^u=0}=(-u_1^{0}(t,1),-u_2^{0}(t,x,1)),\\
        (\theta_1^{u,0},\theta_2^{u,0})|_{Z^u=\infty}=(0,0),\\
        (\theta_1^{u,0},\theta_2^{u,0})|_{t=0}=(0,0),
        \end{cases}
    \end{equation}
where the boundary conditions are chosen to ensure that the approximate solution satisfies the no-slip condition.
These systems are well posed, even though degenerate parabolic. By energy estimates, it is possible to establish rates of decay for the correctors (see Appendix \ref{s:rates}).

In order to have the approximate solution supported on the physical domain $\Omega$, it is necessary to introduce a cut-off function to truncate the correctors.
Let $\psi(z)$ be a smooth function defined on $[0,1]$ such that  $\psi(z)=1$ when $z \in [0,\frac{1}{3}]$, and $\psi(z)=0$ when $z\in[\frac{1}{2},1]$. We then define the approximate solution in the physical domain $\Omega$ as \ $\tilde{\mathbf{u}}^{app}(t,x,z)=(\tilde{u}_1^{app}(t,z),\tilde{u}_2^{app}(t,x,z))$, where
\begin{equation*}
\begin{aligned}
    &\tilde{u}_1^{app}(t,z):=u_1^0(t,z)+\psi(z)\theta_1^0\left(t,\frac{z}{\sqrt{\nu_3}}\right)+\psi(1-z)\theta_1^{u,0}\left(t,\frac{1-z}{\sqrt{\nu_3}}\right),\\
    &\tilde{u}_2^{app}(t,x,z):=u_2^0(t,x,z)+\psi(z)\theta_2^0\left(t,x,\frac{z}{\sqrt{\nu_3}}\right)+\psi(1-z)\theta_2^{u,0}\left(t,x,\frac{1-z}{\sqrt{\nu_3}}\right).
    \end{aligned}
\end{equation*}

 It follows that $\tilde{\mathbf{u}}^{app}$ satisfies the following equations:
\begin{equation*}
\label{EQUAPP1}
    \partial_t\tilde{u}_1^{app}-\nu_3\partial_{zz}\tilde{u}_1^{app}=f_1+A+B,
\end{equation*}
\begin{equation*}
\label{EQUAPP2}
   \partial_t\tilde{u}_2^{app}+\tilde{u}_1^{app}\partial_x\tilde{u}_2^{app}-\nu_1\partial_{xx}\tilde{u}_2^{app}-\nu_3\partial_{zz}\tilde{u}_2^{app}=f_2+C+D+E;
\end{equation*}
with:
\begin{equation*}
\begin{aligned}
    &A=-2\sqrt{\nu_3}(\psi'(z)\partial_Z\theta_1^0+\psi'(1-z)\partial_{Z^u}\theta_1^{u,0}),\\
    &B=-\nu_3(\partial_{zz}u_1^0+\psi''(z)\theta_1^0+\psi''(1-z)\theta_1^{u,0}),\\
    &C=\psi(z)(\psi(z)-1)\theta_1^0\partial_x\theta_2^0+\psi(1-z)(\psi(1-z)-1)\theta_1^{u,0}\partial_x\theta_2^{u,0}-\nu_1\partial_{xx}u_2^0\\
    &-\nu_1\psi(z)\partial_{xx}\theta_2^0
-\nu_1\psi(1-z)\partial_{xx}\theta_2^{u,0}
+\psi(z)(u_1^0-u_1^0(t,0))\partial_x\theta_2^0+\psi(z)\theta_1^0(\partial_xu_2^0\\
& -\partial_x u_2^0(t,x,0))+\psi(1-z)(u_1^0-u_1^0(t,1))\partial_x\theta_2^{u,0} +\psi(1-z)\theta_1^{u,0}(\partial_x u_2^0-\partial_xu_2^0(t,x,1)),\\
    &D=-2\sqrt{\nu_3}(\psi'(z)\partial_Z\theta_2^0+\psi'(1-z)\partial_{Z^u}\theta_2^{u,0}),\\
    &E=-2\nu_3(\psi'(z)\partial_z\theta_2^0+\psi'(1-z)\partial_{Z^u}\theta_2^{u,0}).
    \end{aligned}
\end{equation*}
Above $\psi'$ indicates the derivative with respect to the argument.
This system is complemented by initial and boundary conditions.
\begin{equation*}
    \tilde{\mathbf{u}}^{app}|_{t=0}=\mathbf{u}_0,
\end{equation*}
\begin{equation*}
    \tilde{\mathbf{u}}^{app}|_{z=0}= \tilde{\mathbf{u}}^{app}|_{z=1}=0.
\end{equation*}

 We define the approximation error $\mathbf{u}^{err}(t,x,z):=\mathbf{u}^\nu(t,x,z)-\tilde{\mathbf{u}}^{app}(t,x,z)$, which satisfies the following system:
\begin{equation}
\label{ERR1}
    \partial_tu_1^{err}-\nu_3\partial_{zz}u_1^{err}=-(A+B),
\end{equation}
\begin{equation}
    \label{ERR2}
    \partial_tu_2^{err}+u_1^{err}\partial_x\tilde{u}_2^{app}+u_1^\nu\partial_xu_2^{err}-\nu_1\partial_{xx}u_2^{err}-\nu_3\partial_{zz}u_2^{err}=-(C+D+E);
\end{equation}
with
\begin{equation*}
    \mathbf{u}^{err}|_{t=0}=0,
\end{equation*}
\begin{equation*}
    \mathbf{u}^{err}|_{z=0}=\mathbf{u}^{err}|_{z=1}=0.
\end{equation*}
The task is then to obtain bounds on the error in terms of the viscosity coefficients. 

\subsection{Convergence rates} \label{ss:PlaneConvergence}

Our main results for anisotropic plane parallel flows is the following theorem

\begin{theorem}
\label{teo1}
    Let $\mathbf{ u}_0 \in H^m(\Omega), m>5$, $\mathbf{f} \in C^\infty([0,T]\times\Omega)$ and assume the compatibility condition (\ref{CC1}). Then there exist positive constants $C_i, i=1,2,3,4,5,$ independent of $\nu_1, \nu_3$ such that:
\begin{equation*}
    \normone{\mathbf{u}^\nu-\tilde{\mathbf{u}}^{app}}\leq C_1(\nu_3^\frac{3}{4}+\nu_1).
\end{equation*}
If $\nu_1 \leq \nu_3$:
\begin{equation*}
    \normtwo{\mathbf{u}^\nu-\tilde{\mathbf{u}}^{app}}\leq C_2(\nu_3^\frac{1}{4}+\nu_1^\frac{1}{2})\leq 2C_2\nu_3^\frac{1}{4},
\end{equation*}
\begin{equation*}
    \normthree{\mathbf{u}^\nu-\tilde{\mathbf{u}}^{app}}\leq C_3(\nu_3^\frac{1}{2}+\nu_1^\frac{3}{4}+\nu_3^\frac{3}{8}\nu_1^\frac{1}{4}+\nu_1^\frac{1}{2}\nu_3^\frac{1}{8})\leq 2C_3\nu_3^\frac{1}{2}.
    \end{equation*}
If $\nu_3 < \nu_1 < \nu_3^\frac{1}{2\alpha}$ with $\frac{1}{2}<\alpha<1$:
\begin{equation*}
    \normtwo{\mathbf{u}^\nu-\tilde{\mathbf{u}}^{app}}\leq C_4(\nu_3^\frac{1}{4}+\nu_1^{1-\alpha})\leq 2C_4\nu_1^\frac{1}{4},
\end{equation*}
\begin{equation*}
    \normthree{\mathbf{u}^\nu-\tilde{\mathbf{u}}^{app}}\leq C_5(\nu_3^\frac{1}{2}+\nu_1^\frac{2-\alpha}{2}+\nu_3^\frac{3}{8}\nu_1^\frac{1-\alpha}{2}+\nu_1^\frac{1}{2}\nu_3^\frac{1}{8})\leq 2C_5\nu_1^\frac{1}{2}.
    \end{equation*}
\end{theorem}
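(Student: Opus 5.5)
We argue by energy estimates on the error system \eqref{ERR1}--\eqref{ERR2}, using the corrector bounds of Appendix \ref{s:rates}. The key scaling fact is that for a function $g(t,x,z/\sqrt{\nu_3})$ with $g$ and its $x$-derivatives decaying in $Z$ (the correctors are heat-type and decay exponentially or polynomially), we have $\|g\|_{L^2_z}\lesssim \nu_3^{1/4}$ and $\|\partial_z g\|_{L^2_z}\lesssim \nu_3^{-1/4}$. Moreover, $\|z^k g\|_{L^2_z}\lesssim \nu_3^{k/2+1/4}$, because $z^k=\nu_3^{k/2}Z^k$ and the correctors have finite weighted moments. These facts follow from the energy bounds of the appendix, given the regularity $m>5$ and the compatibility conditions \eqref{CC1}. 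We also use that $\psi'$ and $\psi''$ are supported in $[1/3,1/2]$, so there $Z\ge \nu_3^{-1/2}/3$ and the correctors are $O(\nu_3^{N})$ for any $N$.

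\textbf{Step 1: $L^\infty L^2$.} We first bound the forcing terms. The terms $A$, $D$, $E$ and the cut-off parts of $B$ are beyond all orders, and $\nu_3\partial_{zz}u_1^0$ contributes $O(\nu_3)$. In $C$, the factor $\psi(\psi-1)$ vanishes on $[0,1/3]$, so that term is also beyond all orders. The terms $-\nu_1\partial_{xx}u_2^0$ and $-\nu_1\psi\partial_{xx}\theta_2^0$ give $O(\nu_1)$ in $L^2$; the latter is in fact $O(\nu_1\nu_3^{1/4})$. The Taylor terms $\psi(u_1^0-u_1^0(t,0))\partial_x\theta_2^0$ and $\psi\theta_1^0(\partial_xu_2^0-\partial_xu_2^0(t,x,0))$ are bounded by $\|z\,\cdot\,\|_{L^2}\lesssim \nu_3^{3/4}$. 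Hence $\|A+B\|_{L^2}\lesssim\nu_3$ and $\|C+D+E\|_{L^2}\lesssim\nu_3^{3/4}+\nu_1$.

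Testing \eqref{ERR1} with $u_1^{err}$ and \eqref{ERR2} with $u_2^{err}$, the transport term $u_1^\nu\partial_x u_2^{err}$ integrates to zero, because $u_1^\nu$ is independent of $x$ and we have periodicity. The coupling term obeys $|\int u_1^{err}\partial_x\tilde u_2^{app}u_2^{err}|\le \|\partial_x\tilde u_2^{app}\|_{L^\infty}\|\mathbf u^{err}\|^2$, and $\partial_x\tilde u_2^{app}$ is bounded uniformly in $\nu$, since only $x$-derivatives of the correctors appear. Gronwall's inequality then gives the first estimate.

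\textbf{Step 2: $L^\infty H^1$.} The horizontal derivative is handled by applying $\partial_x$ to \eqref{ERR2}; note that $\partial_x u_1^{err}=0$. Its forcing is again $O(\nu_3^{3/4}+\nu_1)$ in $L^2$, so $\|\partial_x\mathbf u^{err}\|$ is of the same order. The vertical derivative is the main obstacle, because $\partial_z$ of the corrector-dependent terms costs $\nu_3^{-1/2}$. We will instead test \eqref{ERR1}--\eqref{ERR2} with $-\partial_{zz}u^{err}$, which preserves the Dirichlet data. This yields
\[
\tfrac{d}{dt}\|\partial_z\mathbf u^{err}\|^2+\nu_3\|\partial_{zz}\mathbf u^{err}\|^2+\nu_1\|\partial_{xz}u_2^{err}\|^2\lesssim \|\partial_z \mathbf u^{err}\|^2+\frac{1}{\nu_3}\|\mathrm{forcing}\|^2 .
\]
The nonlinear terms are handled as follows. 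The term $\int u_1^\nu\partial_x u_2^{err}\partial_{zz}u_2^{err}$ is integrated by parts to $-\int\partial_zu_1^\nu\,\partial_xu_2^{err}\partial_zu_2^{err}$, using that $\int u_1^\nu\partial_x\partial_z u_2^{err}\partial_z u_2^{err}=0$. Here $\|\partial_z u_1^\nu\|_{L^\infty}\lesssim\nu_3^{-1/2}$, but $\|\partial_xu_2^{err}\|$ is small from the previous step. For the term $u_1^{err}\partial_x\tilde u_2^{app}$ we write $\partial_z$ of it and pair it, after integrating by parts, with $\partial_z u_2^{err}$.

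Combining the forcing bound $\nu_3^{-1/2}(\nu_3^{3/4}+\nu_1)$ with the above gives $\nu_3^{1/4}+\nu_1^{1/2}$ when $\nu_1\le\nu_3$, since then $\nu_1\nu_3^{-1/2}\le\nu_1^{1/2}$. When $\nu_3<\nu_1<\nu_3^{1/(2\alpha)}$, we have $\nu_3^{-1/2}<\nu_1^{-\alpha}$, so the $\nu_1$-part is bounded by $\nu_1^{1-\alpha}$. Finally, $1-\alpha\ge 1/4$ is not automatic, so the final comparison with $\nu_1^{1/4}$ needs to be checked against the stated range; this bookkeeping is where we expect care to be needed.

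\textbf{Step 3: $L^\infty L^\infty$.} We use the anisotropic Agmon/Gagliardo–Nirenberg inequality on $\Omega$ for functions vanishing at $z=0,1$:
\[
\|v\|_{L^\infty}\lesssim \|v\|^{1/2}\|\partial_z v\|^{1/2}+\|v\|^{1/4}\|\partial_z v\|^{1/4}\|\partial_x v\|^{1/4}\|\partial_{xz}v\|^{1/4},
\]
where the $x$-direction is periodic. We insert the $L^2$ rate, the $H^1$ rate, and the analogous rate for $\partial_x$ derivatives, which follows by repeating Step 2 after applying $\partial_x$. Expanding the products reproduces the mixed terms $\nu_3^{3/8}\nu_1^{1/4}$, $\nu_1^{1/2}\nu_3^{1/8}$, and so on. Their comparison to $\nu_3^{1/2}$ or $\nu_1^{1/2}$ then follows in each regime.
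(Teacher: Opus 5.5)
This is essentially the paper's own argument. You run energy estimates on \eqref{ERR1}--\eqref{ERR2} tested against $\mathbf u^{err}$, an $x$-derivative and $-\partial_{zz}\mathbf u^{err}$, absorb the forcing into $\nu_3\|\partial_{zz}\mathbf u^{err}\|^2$ by Young, and finish with the anisotropic Sobolev lemma; your $\nu_1\nu_3^{-1/2}$ is $\le\nu_1^{1/2}$, resp.\ $<\nu_1^{1-\alpha}$, in the two regimes. The one unneeded detour is integrating $\int u_1^\nu\partial_x u_2^{err}\partial_{zz}u_2^{err}$ by parts, since that requires $\|\partial_z u_1^\nu\|_{L^\infty}\lesssim\nu_3^{-1/2}$, which you assert without proof. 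Bounding the term by $\|u_1^\nu\|_{L^\infty}\|\partial_x u_2^{err}\|\,\|\partial_{zz}u_2^{err}\|$ and absorbing it, as the paper does, is enough.

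Your caution about the final comparison is justified, and the paper never checks it. Both $\nu_1^{1-\alpha}\le\nu_1^{1/4}$ and $\nu_3^{3/8}\nu_1^{(1-\alpha)/2}\le\nu_1^{1/2}$ are guaranteed only for $\alpha\le 3/4$. If you keep your sharper $\nu_1\nu_3^{-1/2}$ instead of $\nu_1^{1-\alpha}$, the $L^\infty$ bound $\lesssim\nu_1^{1/2}$ holds on the whole range, because $\nu_1<\nu_3^{1/2}$ there. The $H^1$ bound $\lesssim\nu_1^{1/4}$, however, still needs $\nu_1\le\nu_3^{2/3}$, and the constraint $\nu_1<\nu_3^{1/(2\alpha)}$ guarantees this only when $\alpha\le 3/4$.
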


\begin{corollary}
Under the  hypotheses of Theorem \ref{teo1}, the following rate of convergence holds:
\begin{equation*}
    \normone{\mathbf{u}^\nu-\mathbf{u}^0}\leq C(\nu_3^\frac{1}{4}+\nu_1)
\end{equation*}
where $C$ is a constant depending of $\mathbf{u}_0$ but independent of $\nu_1, \nu_3$.
\end{corollary}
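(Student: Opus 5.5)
The corollary will follow from the first estimate of Theorem \ref{teo1} together with a direct bound on the $L^2$ size of the truncated correctors. I would split the difference with the triangle inequality,
\begin{equation*}
\normone{\mathbf{u}^\nu-\mathbf{u}^0}\leq \normone{\mathbf{u}^\nu-\tilde{\mathbf{u}}^{app}}+\normone{\tilde{\mathbf{u}}^{app}-\mathbf{u}^0}.
\end{equation*}
The first term is controlled by $C_1(\nu_3^{3/4}+\nu_1)\leq C_1(\nu_3^{1/4}+\nu_1)$ for $\nu_3\leq 1$. The second term is exactly the $L^2$ norm of the cut-off boundary layer correctors,
\begin{equation*}
\tilde{\mathbf{u}}^{app}-\mathbf{u}^0=\psi(z)\bm\theta^0\left(t,x,\tfrac{z}{\sqrt{\nu_3}}\right)+\psi(1-z)\bm\theta^{u,0}\left(t,x,\tfrac{1-z}{\sqrt{\nu_3}}\right).
\end{equation*}

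For the corrector at the lower wall, I would change variables $z=\sqrt{\nu_3}Z$ and use $0\leq\psi\leq 1$ with $\psi=0$ for $z\ge 1/2$. This gives
\begin{equation*}
\|\psi\bm\theta^0(t,\cdot,\cdot/\sqrt{\nu_3})\|_{L^2(\Omega)}^2\leq \int_0^L\int_0^{1/2}|\bm\theta^0(t,x,z/\sqrt{\nu_3})|^2\,dz\,dx\leq \sqrt{\nu_3}\,\|\bm\theta^0(t)\|_{L^2(\Omega_\infty)}^2.
\end{equation*}
Taking square roots yields a bound of order $\nu_3^{1/4}\sup_{t\in[0,T]}\|\bm\theta^0(t)\|_{L^2(\Omega_\infty)}$. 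The upper corrector $\bm\theta^{u,0}$ is handled identically with $Z^u$.

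It then remains to know that $\sup_{t\le T}\|\bm\theta^0(t)\|_{L^2(\Omega_\infty)}$ is finite and independent of $\nu_1,\nu_3$. This holds because systems (2) and (3) contain no viscosity and their boundary data depend only on the traces of $\mathbf{u}^0$.

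For the heat equation in $\theta_1^0$, the solution is the explicit error-function or Duhamel representation with bounded data. Its $L^2(0,\infty)$ norm is bounded by the $H^1$-in-time norm of $u_1^0(\cdot,0)$ on $[0,T]$, and it decays like a Gaussian in $Z$.

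For $\theta_2^0$, the equation is a linear transport–diffusion in the variables $(x,Z)$ with source $-\theta_1^0\partial_x u_2^0(t,x,0)$. To estimate it, I would lift the boundary value by subtracting $-u_2^0(t,x,0)e^{-Z}$ and then perform an $L^2$ energy estimate. The transport term $(u_1^0(t,0)+\theta_1^0)\partial_x$ is skew-adjoint in $x$ by periodicity, since its coefficient does not depend on $x$, so it contributes nothing. Gronwall's lemma then gives the bound. These are precisely the decay and rate estimates the paper defers to Appendix \ref{s:rates}, which I would invoke.

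The main obstacle, though a mild one, is to make sure that the $L^2(\Omega_\infty)$ bound on $\theta_2^0$ holds uniformly on $[0,T]$ with constants depending only on $\mathbf{u}_0$ and $\mathbf{f}$, through the regularity $\mathbf{u}^0\in C^1(0,T;H^{m-1})$. Once that is in hand, combining the two terms gives
\begin{equation*}
\normone{\mathbf{u}^\nu-\mathbf{u}^0}\leq C_1(\nu_3^{3/4}+\nu_1)+C\nu_3^{1/4}\leq C(\nu_3^{1/4}+\nu_1).
\end{equation*}
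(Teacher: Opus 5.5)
Your proposal is correct and follows the paper's own argument. The paper also combines the triangle inequality and the first estimate of Theorem \ref{teo1} with the bound $\normone{\psi\bm\theta^0},\ \normone{\psi\bm\theta^{u,0}}\le c\,\nu_3^{1/4}$, which comes from the change of variables $z=\sqrt{\nu_3}Z$ and the $\nu$-independent $L^\infty(0,T;L^2(\Omega_\infty))$ corrector estimates of Appendix \ref{s:rates}; your explicit handling of the cut-off and of the $x$-independent transport coefficient in the $\theta_2^0$ energy estimate simply fills in details the paper leaves implicit.
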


 This corollary is an immediate consequence of the following estimates for the correctors:
 $$\normone{\bm \theta^0}, \normone{\bm \theta^{u,0}} \leq c\nu_3^\frac{1}{4},$$ established below, and the triangle inequality. We observe that in this case the rate of convergence depends also from $\nu_1$, the tangential viscosity coefficient.

\begin{proof}[Proof of Theorem \ref{teo1}]  We start by performing energy estimates on $\mathbf{u}^{err}$.
By multiplying (\ref{ERR1}) by $u_1^{err}$ and integrating over $\Omega$ we obtain: 
\begin{equation*}
\begin{aligned}
    &\frac{1}{2}\frac{d}{dt}||u_1^{err}||_{L^2(0,1)}^2+\nu_3||\partial_zu_1^{err}||_{L^2(0,1)}^2=-\int_0^1(A+B)u_1^{err}dz\\
%    &=\int_{\frac{1}{3}}^{\frac{2}{3}}[2\sqrt{\nu_3}(\psi'(z)\partial_Z\theta_1^0+\psi'(1-z)\partial_{Z^u}\theta_1^{u,0})+\nu_3(\partial_{zz}u_1^0+\psi''(z)\theta_1^0+\psi''(1-z)\theta_1^{u,0})]u_1^{err}dz\\
    &\leq2\int_{\frac{1}{3}}^{\frac{2}{3}}|\sqrt{\nu_3}(\partial_Z\theta_1^0+\partial_{Z^u}\theta_1^{u,0})+\nu_3(\theta_1^0+\theta_1^{u,0})||u_1^{err}|dz+\nu_3||u_1^0||_{H^2(0,1)}||u_1^{err}||_{L^2(0,1)}\\
    &\leq 2\nu_3||u_1^{err}||_{L^2(0,1)}(||u_1^0||_{H^2(0,1)}+||\theta_1^0||_{L^2(0,\infty)}+||\theta_1^{u,0}||_{L^2(0,\infty)})+2\int_{\frac{1}{3}}^{\frac{2}{3}}|\sqrt{\nu_3}(\partial_Z\theta_1^0+\partial_{Z^u}\theta_1^{u,0})||u_1^{err}|dz.
\end{aligned}
\end{equation*}
%Our intent now is to find a best approximation of the last term. This is potentially possible because $\theta_1^0$ and $\theta_1^{u,0}$ depend on the thickness of the boundary layer.
The first term in the integral on the right-hand side can be estimated by
\begin{equation*}
\begin{aligned}
    &\int_{\frac{1}{3}}^{\frac{2}{3}}\left|\partial_Z\theta_1^0\left(t,\frac{z}{\sqrt{\nu_3}}\right)\right||u_1^{err}|dz =\int_{\frac{1}{3}}^{\frac{2}{3}}\left|\partial_Z\theta_1^0\left(t,\frac{z}{\sqrt{\nu_3}}\right)\right||u_1^{err}|\frac{\langle Z\rangle ^2}{\langle Z\rangle ^2}dz\\
    &\leq \left(\int_{\frac{1}{3}}^{\frac{2}{3}}\frac{1}{\langle Z\rangle^4}(u_1^{err}(t,z))^2dz\right)^\frac{1}{2}\left(\int_{\frac{1}{3\sqrt{\nu_3}}}^{\frac{2}{3\sqrt{\nu_3}}}(\partial_Z\theta_1^0(t,Z))^2\sqrt{\nu_3}\langle Z \rangle ^4 dZ\right)^\frac{1}{2}\\
    &\leq 9\nu_3^\frac{5}{4}||u_1^{err}||_{L^2(0,1)}||\langle Z \rangle ^2 \partial_Z\theta_1^0||_{L^2(0,\infty)}
    \end{aligned}
\end{equation*}
where we have introduced the Japanese bracket $\langle Z \rangle:=\sqrt{1+|Z|^2}$. The second term in the integral can be bounded similarly.
Therefore we have that:
\begin{equation*}
\begin{aligned}
    &\frac{1}{2}\frac{d}{dt}||u_1^{err}||_{L^2(0,1)}^2+\nu_3||\partial_zu_1^{err}||_{L^2(0,1)}^2\leq 18\nu_3||u_1^{err}||_{L^2(0,1)}(||\langle Z \rangle ^2 \partial_Z\theta_1^0||_{L^2(0,\infty)}\\
    &\qquad \qquad +||\langle Z^u \rangle ^2 \partial_{Z^u}\theta_1^0||_{L^2(0,\infty)}+||u_1^0||_{H^2(0,1)}+||\theta_1^0||_{L^2(0,\infty)}+||\theta_1^{u,0}||_{L^2(0,\infty)}).
\end{aligned}
\end{equation*}
Integrating over time and applying Young and Grönwall's inequalities yields:
\begin{equation}
\begin{aligned}
\label{u1}
    &||u_1^{err}||_{L^\infty(0,T, L^2(0,1))}+\sqrt{\nu_3}||\partial_zu_1^{err}||_{L^2((0,T)\times(0,1))}\leq
    18\nu_3((||\langle Z \rangle ^2 \partial_Z\theta_1^0||_{L^2((0,T)\times(0,+\infty))}\\
     &\qquad \qquad +||\langle Z^u \rangle ^2 \partial_{Z^u}\theta_1^0||_{L^2((0,T)\times(0,+\infty))} +||u_1^0||_{L^2(0,T,H^2(0,1))}+||\theta_1^0||_{L^2((0,T)\times(0,+\infty))}\\
     & \qquad \qquad \qquad +||\theta_1^{u,0}||_{L^2((0,T)\times(0,+\infty))})
     \leq c\nu_3.
\end{aligned}
\end{equation}
%and we have the first estimates of $u_1^{err}$.
To bound the spatial derivative term, we multiply (\ref{ERR1}) by -$\partial_{zz}u_1^{err}$ and integrate over $\Omega$:
\begin{equation*}
    \begin{aligned}
        &\frac{1}{2}\frac{d}{dt}||\partial_zu_1^{err}||_{L^2(0,1)}^2+\nu_3||\partial_{zz}u_1^{err}||_{L^2(0,1)}^2\\
        &\qquad \leq 18\nu_3||\partial_{zz}u_1^{err}||_{L^2(0,1)}(||\langle Z \rangle ^2 \partial_Z\theta_1^0||_{L^2(0,\infty)}+||\langle Z^u \rangle ^2 \partial_{Z^u}\theta_1^0||_{L^2(0,\infty)}\\
    &\qquad \qquad \qquad+||u_1^0||_{H^2(0,1)}+||\theta_1^0||_{L^2(0,\infty)}+||\theta_1^{u,0}||_{L^2(0,\infty)})\leq c\nu_3||\partial_{zz}u_1^{err}||_{L^2(0,1)}\\
        &\qquad \leq \nu_3\left(\frac{||\partial_{zz}u_1^{err}||_{L^2(0,1)}^2}{2}+\frac{c^2}{2}\right),
    \end{aligned}
\end{equation*}
where we used Young's inequality and  estimate (\ref{u1}).

 Finally, integrating over time gives that 
\begin{equation}
    \begin{aligned}
    \label{u1z}
        ||\partial_zu_1^{err}||_{L^\infty(0,T, L^2(0,1))}+\sqrt{\nu_3}||\partial_{zz}u_1^{err}||_{L^2((0,T)\times(0,1))}\leq c\sqrt{\nu_3}.
    \end{aligned}
\end{equation}
Using (\ref{u1}), (\ref{u1z}) and the Anisotropic Sobolev Embedding (see Appendix \ref{s:rates}), we also obtain that:
\begin{equation}  \label{u1inf}||u_1^{err}||_{L^\infty((0,T)\times(0,1))}\leq||u_1^{err}||_{L^\infty(0,T,L^2(0,1))}^\frac{1}{2}||u_1^{err}||_{L^\infty(0,T, H^1(0,1))}^\frac{1}{2}\leq c\nu_3^\frac{3}{4}.
\end{equation}
As expected, the bounds on  $u_1^{err}$  does not depend to $\nu_1$. 

 We proceed by performing a  similar  analysis for $u_2^{err}$. Multiplying (\ref{ERR2}) by $u_2^{err}$ and integrating over $\Omega$ yields:
\begin{equation}
    \begin{aligned}
    \label{u21}
        &\frac{1}{2}\frac{d}{dt}||u_2^{err}||_{L^2(\Omega)}^2+\nu_3||\partial_zu_2^{err}||_{L^2(\Omega)}^2+\nu_1||\partial_xu_2^{err}||_{L^2(\Omega)}^2\\
        =-\int_{\Omega}u_1^{err}&\partial_x\tilde{u}_2^{app}u_2^{err}-\int_{\Omega}u_1\partial_xu_2^{err}u_2^{err}-\int_{\Omega}Cu_2^{err}-\int_{\Omega}Du_2^{err}-\int_{\Omega}Eu_2^{err}\\
        &=:J_1+J_2+J_3+J_4,
    \end{aligned}
\end{equation}
where we used that the second integral vanishes by periodicity and since $u_1^\nu$ does not depend on $x$. In particular we have:
\begin{equation*}
    \begin{aligned}
        &J_1%=\int_{\Omega}u_1^{err}\partial_x\tilde{u}_2^{app}u_2^{err}=-\int_{\Omega}[u_1^{err}(\partial_xu_2^0+\partial_x\theta_2^0+\partial_x\theta_2^{u,0})]\\
        \leq ||u_1^{err}||_{L^2(0,1)}(||\partial_xu_2^0||_{L^\infty(\Omega)}+||\partial_x\theta_2^0||_{L^\infty(\Omega_{\infty})}+||\partial_x\theta_2^{u,0}||_{L^\infty(\Omega_\infty)})||u_2^{err}||_{L^2(\Omega)}\\
        &\leq c\nu_3(||\partial_xu_2^0||_{L^\infty(\Omega)}+||\partial_x\theta_2^0||_{L^\infty(\Omega_{\infty})}+||\partial_x\theta_2^{u,0}||_{L^\infty(\Omega_\infty)})||u_2^{err}||_{L^2(\Omega)}\leq c\nu_3||u_2^{err}||_{L^2(\Omega)},
    \end{aligned}
\end{equation*}
using again  estimate  (\ref{u1}) on $u_1^{err}$,
\begin{equation*}
    \begin{aligned}
        &J_4%=-\int_{\Omega}Fu_2^{err}=\int_{\Omega}\nu_3\partial_{zz}u_2^0u_2^{err}+\int_{\Omega}\nu_3\psi''(z)\theta_2^0u_2^{err}+\int_{\Omega}\nu_3\psi''(1-z)\theta_2^{u,0}u_2^{err}\\
        \leq \nu_3 ||u_2^{err}||_{L^2(\Omega)}(||u_2^0||_{H^2(\Omega)}+||\theta_2^0||_{L^2(\Omega_\infty)}+||\theta_2^{u,0}||_{L^2(\Omega_\infty)})\leq c\nu_3||u_2^{err}||_{L^2(\Omega)},\\
        &J_3%=-\int_{\Omega}Eu_2^{err}=2\sqrt{\nu_3}\int_{\Omega}\left(\psi'(z)\partial_Z\theta_2^0\left(t,x,\frac{z}{\sqrt{\nu_3}}\right)+\psi'(1-z)\partial_{Z^u}\theta_2^{u,0}\left(t,x,\frac{1-z}{\sqrt{\nu_3}}\right)\right)u_2^{err}\\
        \leq 2\nu_3^\frac{3}{4}||u_2^{err}||_{L^2(\Omega)}(||\partial_x\theta_2^0||_{L^2(\Omega_\infty)}+||\partial_{Z^u}\theta_2^{u,0}||_{L^2(\Omega_\infty)})\leq c\nu_3^\frac{3}{4}||u_2^{err}||_{L^2(\Omega)},
    \end{aligned}
\end{equation*}
and finally, 
\begin{equation}
\begin{aligned}
\label{j2}
    &J_2=-\int_{\Omega}Cu_2^{err}=\int_{\Omega}\nu_1\partial_{xx}u_2^0u_2^{err}+\int_{\Omega}\nu_1\partial_{xx}\theta_2^0\psi(z)u_2^{err}+\int_{\Omega}\nu_1\partial_{xx}\theta_2^{u,0}\psi(1-z)u_2^{err}\\
    &\qquad -\int_{\Omega}\psi(z)(\psi(z)-1)\theta_1^0\partial_x\theta_2^0u_2^{err}-\int_{\Omega}\psi(1-z)(\psi(1-z)-1)\theta_1^{u,0}\partial_x\theta_2^{u,0}u_2^{err} \\
    &\qquad \qquad -\int_{\Omega}(\psi(z)(u_1^0-u_1^0(t,0))\partial_x\theta_2^0+\psi(z)\theta_1^0(\partial_xu_2^0-\partial_x u_2^0(t,x,0))\\
    &\qquad \qquad \qquad \int_\Omega\psi(1-z)(u_1^0-u_1^0(t,1))\partial_x\theta_2^{u,0}+\psi(1-z)\theta_1^{u,0}(\partial_xu_2^0-\partial_xu_2^0(t,x,1)))u_2^{err}.
\end{aligned}
\end{equation}
We bound the terms on the right-hand side of (\ref{j2}) as follows:
\begin{equation*}
    \begin{aligned}
        (1)\quad &-\int_{\Omega}\psi(z)(\psi(z)-1)\theta_1^0\partial_x\theta_2^0u_2^{err}\leq \int_0^L\int_\frac{1}{3}^\frac{2}{3}\left|\theta_1^0\left(t,\frac{z}{\sqrt{\nu_3}}\right)\partial_x\theta_2^0\left(t,x,\frac{z}{\sqrt{\nu_3}}\right)u_2^{err}\right| \frac{\langle Z \rangle ^4}{\langle Z \rangle ^4} dzdx\cdot \\
        & \qquad\qquad \cdot \left(\int_0^L\int_\frac{1}{3}^\frac{2}{3}\frac{|u_2^{err}|^2}{\langle Z \rangle ^4}dzdx\right)^\frac{1}{2}\left(\int_0^L\int_\frac{1}{3}^\frac{2}{3}|\theta_1^0\partial_x\theta_2^0|^2\langle Z \rangle ^4\sqrt{\nu_3}dZdx\right)^\frac{1}{2}\\
        &\leq 9\nu_3^\frac{5}{4}||u_2^{err}||_{L^2(\Omega)}||\theta_1^0\partial_x\theta_2^0\langle Z \rangle ^2||_{L^2(\Omega_\infty)}\leq 9\nu_3||u_2^{err}||_{L^2(\Omega)}||\theta_1^0||_{L^\infty(0,+\infty)}||\partial_x\theta_2^0\langle Z \rangle ^2||_{L^2(\Omega_\infty)};
    \end{aligned}
    \end{equation*}
    \begin{equation*}
    \begin{aligned}
     (2)\quad   &-\int_{\Omega}(\psi(z)(u_1^0(t,z)-u_1^0(t,0))\partial_x\theta_2^0u_2^{err}=-\int_\Omega\psi(z)z\partial_zu_1^0(t,\xi)\partial_x\theta_2^0u_2^{err}\\
        &\leq \sqrt{\nu_3}\int_{\Omega}|Z\partial_zu_1^0(t,\xi)\partial_x\theta_2^0u_2^{err}|\sqrt{\nu_3}dZdx \leq \nu_3^\frac{3}{4}||u_2^{err}||_{L^2(\Omega)}||\langle Z \rangle\partial_x\theta_2^0||_{L^2(\Omega_\infty)}||\partial_zu_1^0||_{L^\infty(0,1)}\\
        &\leq \nu_3^\frac{3}{4}||u_2^{err}||_{L^2(\Omega)}||\langle Z \rangle\partial_x\theta_2^0||_{L^2(\Omega_\infty)}||u_1^0||_{H^2(0,1)},
    \end{aligned}
\end{equation*}
thanks to the regularity of $u_1^0$;
\begin{equation*}
    \begin{aligned}
            (3) \quad    &-\int_{\Omega}(\psi(z)\theta_1^0(\partial_xu_2^0-\partial_xu_2(t,x,0))u_2^{err}=-\int_\Omega\psi(z)z\theta_1^0\partial_z\partial_xu_2^0(t,x,\xi)u_2^{err}\\
        &\leq \sqrt{\nu_3}\int_{\Omega}|Z\theta_1^0\partial_z\partial_xu_2^0(t,x,\xi)u_2^{err}|\sqrt{\nu_3}dZdx \leq \nu_3^\frac{3}{4}||u_2^{err}||_{L^2(\Omega)}||\langle Z \rangle\theta_1^0||_{L^2(0,+\infty)}||\partial_z\partial_xu_2^0||_{L^\infty(\Omega)}\\
        &\leq \nu_3^\frac{3}{4}||u_2^{err}||_{L^2(\Omega)}||\langle Z \rangle\theta_1^0||_{L^2(0,\infty)}||u_2^0||_{H^3(\Omega)},
    \end{aligned}
\end{equation*}
thanks to the regularity of $u_2^0$. Similar estimates hold for the remaining terms. 
By inserting $(1), (2), (3)$ in (\ref{j2}), it follow that 
\begin{equation*}
    \begin{aligned}
        &J_2\leq \nu_1||u_2^{err}||_{L^2(\Omega)}(||u_2^0||_{H^2(\Omega)}+||\partial_{xx}\theta_2^0||_{L^2(\Omega_\infty)}+||\partial_{xx}\theta_2^{u,0}||_{L^2(\Omega_\infty)})\\
        &+9\nu_3^\frac{3}{4}||u_2^{err}||_{L^2(\Omega)}(||\theta_1^0||_{L^\infty(0,\infty
        )}||\partial_x\theta_2^0\langle Z \rangle ^2||_{L^2(\Omega_\infty)}+||\theta_1^{u,0}||_{L^\infty(0,+\infty)}||\partial_x\theta_2^{u,0}\langle Z \rangle ^2||_{L^2(\Omega_\infty)}\\
        &+||\langle Z \rangle\partial_x\theta_2^0||_{L^2(\Omega_\infty)}||u_1^0||_{H^2(0,1)}+||\langle Z \rangle\theta_1^0||_{L^2(0,+\infty)}||u_2^0||_{H^3(\Omega)}\\
        &+||\langle Z \rangle\partial_x\theta_2^{u.0}||_{L^2(\Omega_\infty)}||u_1^0||_{H^2(0,1)}+||\langle Z \rangle\theta_1^{u,0}||_{L^2(0,+\infty)}||u_2^0||_{H^3(\Omega)})\leq c(\nu_1+\nu_3^\frac{3}{4})||u_2^{err}||_{L^2(\Omega)}.
    \end{aligned}
\end{equation*}\\
Finally, exploiting the bounds on $J_i$, $i=1,\ldots,4$ in (\ref{u21}) and applying Young and Grönwall's inequalities gives
\begin{equation*}
\label{u22}
    ||u_2^{err}||_{L^\infty(0,T; L^2(\Omega))}+\sqrt{\nu_3}||\partial_zu_2^{err}||_{L^2(0,T;L^2(\Omega))}+\sqrt{\nu_1}||\partial_xu_2^{err}||_{L^2(0,T;L^2(\Omega))}\leq c(\nu_3^\frac{3}{4}+\nu_1)
\end{equation*}
%we obtain a first estimate of $u_2^{err}$ depending also from $\nu_1$.

 Next, by multiplying (\ref{ERR2}) by $-\partial_{xx}u_2^{err}$ and integrating over $\Omega$ we have that
\begin{equation}
    \begin{aligned}
    \label{u2x1}
        &\frac{1}{2}\frac{d}{dt}||\partial_xu_2^{err}||_{L^2(\Omega)}^2+\nu_1||\partial_{xx}u_2^{err}||_{L^2(\Omega)}+\nu_3||\partial_{zx}u_2^{err}||_{L^2(\Omega)}=\\
        &\int_{\Omega}u_1^{err}\partial_x\tilde{u}_2^{app}\partial_{xx}u_2^{err}+\int_{\Omega}u_1^\nu\partial_xu_2^{err}\partial_{xx}u_2^{err}+\int_{\Omega}C\partial_{xx}u_2^{err}+\int_{\Omega}D\partial_{xx}u_2^{err}+\int_{\Omega}E\partial_{xx}u_2^{err}\\
        &=:K_1+K_2+K_3+K_4,
    \end{aligned}
\end{equation}
where again  the second integral vanishes  due to periodicity and since $u_1^\nu$ is independent  of  $x$.
Integrating by parts in $K_1, K_3, K_4$ gives:
\begin{equation*}
    \begin{aligned}  &K_1%=\int_{\Omega}u_1^{err}\partial_x\tilde{u}_2^{app}\partial_{xx}u_2^{err}=\int_{\Omega}u_1^{err}\partial_{xx}\tilde{u}_2^{app}\partial_{x}u_2^{err}\\
       \leq||u_1^{err}||_{L^2(0,1)}||\partial_xu_2^{err}||_{L^2(\Omega)}(||\partial_{xx}u_2^0||_{L^\infty(\Omega)}+||\partial_{xx}\theta_2^0||_{L^\infty(\Omega_\infty)}+||\partial_{xx}\theta_2^{u,0}||_{L^\infty(\Omega_\infty)}\\
        &\leq c\nu_3||\partial_xu_2^{err}||_{L^2(\Omega)}(||\partial_{xx}u_2^0||_{L^\infty(\Omega)}+||\partial_{xx}\theta_2^0||_{L^\infty(\Omega_\infty)}+||\partial_{xx}\theta_2^{u,0}||_{L^\infty(\Omega_\infty)})\\
        &\leq c\nu_3||\partial_xu_2^{err}||_{L^2(\Omega)},\\ 
                &K_4%=-\int_{\Omega}\nu_3(\partial_{zz}u_2^0\partial_{xx}u_2^{err}+\psi''(z)\theta_2^0\partial_{xx}u_2^{err}+\psi''(1-z)\theta_2^{u,0}\partial_{xx}u_2^{err})\\
                %&\int_{\Omega}\nu_3(\partial_{xzz}u_2^0\partial_{x}u_2^{err}+\psi''(z)\partial_x\theta_2^0\partial_{x}u_2^{err}+\psi''(1-z)\partial_x\theta_2^{u,0}\partial_{x}u_2^{err})\\
        \leq \nu_3 ||\partial_xu_2^{err}||_{L^2(\Omega)}(||u_2^0||_{H^3(\Omega)}+||\partial_x\theta_2^0||_{L^2(\Omega_\infty)}+||\partial_x\theta_2^{u,0}||_{L^2(\Omega_\infty)})\\
        &\leq c\nu_3||\partial_xu_2^{err}||_{L^2(\Omega)},\\ 
        &K_3%=-2\sqrt{\nu_3}\int_{\Omega}\left(\psi'(z)\partial_Z\theta_2^0\left(t,x,\frac{z}{\sqrt{\nu_3}}\right)+\psi'(1-z)\partial_{Z^u}\theta_2^{u,0}\left(t,x,\frac{1-z}{\sqrt{\nu_3}}\right)\right)\partial_{xx}u_2^{err}\\
        %&2\sqrt{\nu_3}\int_{\Omega}\left(\psi'(z)\partial_{xZ}\theta_2^0\left(t,x,\frac{z}{\sqrt{\nu_3}}\right)+\psi'(1-z)\partial_{xZ^u}\theta_2^{u,0}\left(t,x,\frac{1-z}{\sqrt{\nu_3}}\right)\right)\partial_{x}u_2^{err}\\
        \leq 2\nu_3^\frac{3}{4}||\partial_xu_2^{err}||_{L^2(\Omega)}(||\partial_{xZ}\theta_2^{0}||_{L^2(\Omega_\infty)}+||\partial_{xZ^u}\theta_2^{u,0}||_{L^2(\Omega_\infty)})\leq c\nu_3^\frac{3}{4}||\partial_xu_2^{err}||_{L^2(\Omega)}.
    \end{aligned}
\end{equation*}
 Integrating by parts in $K_2$ and using the same techniques as for $J_2$:
\begin{equation*}
    \begin{aligned}
           &K_2%=-\int_{\Omega}\nu_1\partial_{xx}u_2^0\partial_{xx}u_2^{err}-\int_{\Omega}\nu_1\partial_{xx}\theta_2^0\psi(z)\partial
           %_{xx}u_2^{err}-\int_{\Omega}\nu_1\partial_{xx}\theta_2^{u,0}\psi(1-z)\partial_{xx}u_2^{err}\\
    %&+\int_{\Omega}\psi(z)(\psi(z)-1)\theta_1^0\partial_x\theta_2^0\partial_{xx}u_2^{err}+\int_{\Omega}\psi(1-z)(\psi(1-z)-1)\theta_1^{u,0}\partial_x\theta_2^{u,0}\partial_{xx}u_2^{err}\\
    %&-\int_{\Omega}(\psi(z)(u_1^0-u_1^0(t,0))\partial_x\theta_2^0-\psi(z)\theta_1^0(\partial_xu_2^0-\partial_x u_2^0(t,x,0))\\
    %&-\psi(1-z)(u_1^0-u_1^0(t,1))\partial_x\theta_2^{u,0}-\psi(1-z)\theta_1^{u,0}(\partial_xu_2^0-\partial_xu_2^0(t,x,1)))\partial_{xx}u_2^{err}\\
        \leq \nu_1||\partial_xu_2^{err}||_{L^2(\Omega)}(||u_2^0||_{H^3(\Omega)}+||\partial_{xxx}\theta_2^0||_{L^2(\Omega_\infty)}+||\partial_{xxx}\theta_2^{u,0}||_{L^2(\Omega_\infty)})\\
        &+9\nu_3^\frac{3}{4}||\partial_xu_2^{err}||_{L^2(\Omega)}(||\theta_1^0||_{L^\infty(0,+\infty)}||\partial_{xx}\theta_2^0\langle Z \rangle ^2||_{L^2(\Omega_\infty)}+||\theta_1^{u,0}||_{L^\infty(0,+\infty)}||\partial_{xx}\theta_2^{u,0}\langle Z \rangle ^2||_{L^2(\Omega_\infty)}\\
        &+||\langle Z \rangle\partial_x\theta_2^0||_{L^2(\Omega_\infty)}||u_1^0||_{H^3(0,1)}+||\langle Z \rangle\theta_1^0||_{L^2(0,\infty)}||u_2^0||_{H^4(\Omega)}\\
        &+||\langle Z \rangle\partial_x\theta_2^{u.0}||_{L^2(\Omega_\infty)}||u_1^0||_{H^3(0,1)}+||\langle Z \rangle\theta_1^{u,0}||_{L^2(0,\infty)}||u_2^0||_{H^4(\Omega)})\\
        &\leq c(\nu_1+\nu_3^\frac{3}{4})||\partial_xu_2^{err}||_{L^2(\Omega)}.
    \end{aligned}
\end{equation*}
By inserting these bounds in (\ref{u2x1}) and using Young and Grönwall's inequalities we have that:
\begin{equation*}
    \label{u2x2}||\partial_xu_2^{err}||_{L^\infty(0,T;L^2(\Omega))}+ \sqrt{\nu_1}||\partial_{xx}u_2^{err}||_{L^2((0,T)\times\Omega)}+\sqrt{\nu_3}||\partial_{xz}u_2^{err}||_{L^2((0,T)\times\Omega)}
    \leq c(\nu_3^\frac{3}{4}+\nu_1).
\end{equation*}
Similarly, multiplying (\ref{ERR2}) by $-\partial_{zz}u_2^{err}$ yields:
\begin{equation}
    \begin{aligned}
    \label{u2z1}
                &\frac{1}{2}\frac{d}{dt}||\partial_zu_2^{err}||_{L^2(\Omega)}^2+\nu_1||\partial_{zx}u_2^{err}||_{L^2(\Omega)}^2+\nu_3||\partial_{zz}u_2^{err}||_{L^2(\Omega)}^2=\\
        &\int_{\Omega}u_1^{err}\partial_x\tilde{u}_2^{app}\partial_{zz}u_2^{err}+\int_{\Omega}u_1^\nu\partial_xu_2^{err}\partial_{zz}u_2^{err}+\int_{\Omega}C\partial_{zz}u_2^{err}+\int_{\Omega}D\partial_{zz}u_2^{err}+\int_{\Omega}E\partial_{zz}u_2^{err}\\
        &=:L_1+L_2+L_3+L_4+L_5.
    \end{aligned}
\end{equation}
We estimate the terms on the right as follows:
\begin{flalign*}
    &L_1+L_2 \leq ||u_1^{err}||_{L^2(0,1)} ||\partial_{zz}u_2^{err}||_{L^2(\Omega)}
    ( ||\partial_{x}u_2^0||_{L^\infty(\Omega)}+ ||\partial_{x}\theta_2^0||_{L^\infty(\Omega_\infty)}+ ||\partial_{x}\theta_2^{u,0}||_{L^\infty(\Omega_\infty)})\notag \\
    &\quad + ||\partial_{zz}u_2^{err}||_{L^2(\Omega)} ||u_1^\nu||_{L^\infty(\Omega)}||\partial_xu_2^{err}||_{L^2(\Omega)}\leq c \nu_3 ||\partial_{zz}u_2^{err}||_{L^2(\Omega)} 
    ( ||\partial_{x}u_2^0||_{L^\infty(\Omega)} \notag \\
    &\quad + ||\partial_{x}\theta_2^0||_{L^\infty(\Omega_\infty)} +
    ||\partial_{x}\theta_2^{u,0}||_{L^\infty(\Omega_\infty)}) 
    + ||\partial_{zz}u_2^{err}||_{L^2(\Omega)} ||u_1^\nu||_{L^\infty(\Omega)}(\nu_3^{3/4} + \nu_1)\\
    &\quad\leq c(\nu_3^\frac{3}{4}+\nu_1)||\partial_{zz}u_2^{err}||_{L^2(\Omega)}&
\end{flalign*}
using (\ref{u1}) and (\ref{u1inf}).
\begin{flalign*}
    &L_5 \leq \nu_3 ||\partial_{zz}u_2^{err}||_{L^2(\Omega)}
    ( ||u_2^0||_{H^2(\Omega)} + ||\theta_2^0||_{L^2(\Omega_\infty)} + ||\theta_2^{u,0}||_{L^2(\Omega_\infty)})\leq c\nu_3||\partial_{zz}u_2^{err}||_{L^2(\Omega)}. &\\
    &L_4 \leq 2\nu_3^{3/4} ||\partial_{zz}u_2^{err}||_{L^2(\Omega)}
    ( ||\partial_{Z}\theta_2^{0}||_{L^2(\Omega_\infty)} + ||\partial_{Z^u}\theta_2^{u,0}||_{L^2(\Omega_\infty)})\leq c\nu_3^\frac{3}{4}||\partial_{zz}u_2^{err}||_{L^2(\Omega)}. &
        \end{flalign*}
    \begin{flalign*}
    &L_3 \leq \nu_1 ||\partial_{zz}u_2^{err}||_{L^2(\Omega)} 
    ( ||u_2^0||_{H^2(\Omega)} + ||\partial_{xx}\theta_2^0||_{L^2(\Omega_\infty)} + ||\partial_{xx}\theta_2^{u,0}||_{L^2(\Omega_\infty)}) \notag \\
    &\quad + 9\nu_3^{3/4} ||\partial_{zz}u_2^{err}||_{L^2(\Omega)} 
    ( ||\theta_1^0||_{L^\infty(0,\infty)} ||\partial_{x}\theta_2^0 \langle Z \rangle^2||_{L^2(\Omega_\infty)}+ ||\theta_1^{u,0}||_{L^\infty(0,\infty)} ||\partial_{x}\theta_2^{u,0} \langle Z \rangle^2||_{L^2(\Omega_\infty)} \notag \\
    &\quad + ||\langle Z \rangle \partial_x\theta_2^0||_{L^2(\Omega_\infty)} ||u_1^0||_{H^2(0,1)} 
    + ||\langle Z \rangle \theta_1^0||_{L^2(0,\infty)} ||u_2^0||_{H^3(\Omega)} \notag \\
    &\quad + ||\langle Z \rangle \partial_x\theta_2^{u,0}||_{L^2(\Omega_\infty)} ||u_1^0||_{H^2(0,1)} 
    + ||\langle Z \rangle \theta_1^{u,0}||_{L^2(0,\infty)} ||u_2^0||_{H^3(\Omega)})\\
    &\quad\leq c(\nu_3^\frac{3}{4}+\nu_1)||\partial_{zz}u_2^{err}||_{L^2(\Omega)}&
\end{flalign*}
using the same techniques as in $J_2$.
We now insert these estimates in (\ref{u2z1}):
\begin{equation} 
    \begin{aligned}
    \label{young}
        &\frac{1}{2}\frac{d}{dt}||\partial_zu_2^{err}||_{L^2(\Omega)}^2+\nu_1||\partial_{zx}u_2^{err}||_{L^2(\Omega)}^2+\nu_3||\partial_{zz}u_2^{err}||_{L^2(\Omega)}^2\\
        &\qquad \qquad\leq c\nu_3^\frac{3}{4}||\partial_{zz}u_2^{err}||_{L^2(\Omega)}+c\nu_1||\partial_{zz}u_2^{err}||_{L^2(\Omega)}.
    \end{aligned}
\end{equation}
There are two distinct cases: $\nu_1\leq \nu_3$ and $\nu_3 < \nu_1$. The second case includes the situation where the normal viscosity vanishes faster than the tangential one, which is the situation studied already in the literature for general flows \cite{masmoudi1998euler,nonflat}. The most interesting and novel situation is when the tangential viscosity goes to zero faster than the normal viscosity in the first case.

Hence, we let $\nu_1\leq\nu_3$. Using Young's inequality in \eqref{young}:
\begin{equation*}
\begin{aligned}
    &\frac{1}{2}\frac{d}{dt}||\partial_zu_2^{err}||_{L^2(\Omega)}^2+\nu_1||\partial_{zx}u_2^{err}||_{L^2(\Omega)}^2+\nu_3||\partial_{zz}u_2^{err}||_{L^2(\Omega)}^2\leq\\
    &\qquad \qquad \frac{\nu_3}{2}||\partial_{zz}u_2^{err}||^2_{L^2(\Omega)}+\frac{\nu_1}{2}||\partial_{zz}u_2^{err}||^2_{L^2(\Omega)}+\frac{c^2\nu_3^\frac{1}{2}}{2}+\frac{c^2\nu_1}{2}.
    \end{aligned}
\end{equation*}
Consequently:
\begin{equation*}
            \frac{1}{2}\frac{d}{dt}||\partial_zu_2^{err}||_{L^2(\Omega)}^2+\nu_1||\partial_{zx}u_2^{err}||_{L^2(\Omega)}^2+\frac{1}{2}(\nu_3-\nu_1)||\partial_{zz}u_2^{err}||_{L^2(\Omega)}^2\leq c(\nu_3^\frac{1}{2}+\nu_1).
\end{equation*}
Then we have that:
\begin{equation}
    \begin{aligned}
    \label{u2z2}     &\qquad ||\partial_zu_2^{err}||_{L^\infty(0,T;L^2(\Omega))}+\sqrt{\nu_1}||\partial_{zx}u_2^{err}||_{L^2((0,T)\times\Omega)}+\sqrt{\nu_3-\nu_1}||\partial_{zz}u_2^{err}||_{L^2((0,T)\times\Omega)}\\
        &\qquad \qquad \leq c(\nu_3^\frac{1}{4}+\nu_1^\frac{1}{2}).
    \end{aligned}
\end{equation}

To tackle the case $\nu_3<\nu_1$, we apply  Young's inequality in (\ref{young}) differently:
\begin{equation*}
\begin{aligned}
        &\frac{1}{2}\frac{d}{dt}||\partial_zu_2^{err}||_{L^2(\Omega)}^2+\nu_1||\partial_{zx}u_2^{err}||_{L^2(\Omega)}^2+\nu_3||\partial_{zz}u_2^{err}||_{L^2(\Omega)}^2\\
        & \qquad \qquad \leq \frac{\nu_3}{2}||\partial_{zz}u_2^{err}||^2_{L^2(\Omega)}+\frac{\nu_1^{2\alpha}}{2}||\partial_{zz}u_2^{err}||^2_{L^2(\Omega)}+\frac{c^2\nu_3^\frac{1}{2}}{2}+\frac{c^2\nu_1^{2(1-\alpha)}}{2}
\end{aligned}
\end{equation*}
with $0<\alpha<1$, and thus:
\begin{equation*}
            \frac{1}{2}\frac{d}{dt}||\partial_zu_2^{err}||_{L^2(\Omega)}^2+\nu_1||\partial_{zx}u_2^{err}||_{L^2(\Omega)}^2+\frac{1}{2}(\nu_3-\nu_1^{2\alpha})||\partial_{zz}u_2^{err}||_{L^2(\Omega)}^2\leq c(\nu_3^\frac{1}{2}+\nu_1^{2(1-\alpha)}).
\end{equation*}
It follows that, to achieve convergence,  $\nu_3<\nu_1 \leq \nu_3^\frac{1}{2\alpha}$, where $\frac{1}{2}<\alpha<1$, with the optimal rate for $\alpha \approx 1$.
We then conclude  that:
\begin{equation}
    \begin{aligned}
    \label{u2z22} &||\partial_zu_2^{err}||_{L^\infty(0,T;L^2(\Omega))}+\sqrt{\nu_1}||\partial_{zx}u_2^{err}||_{L^2((0,T)\times\Omega)}+\sqrt{\nu_3-\nu_1^{2\alpha}}||\partial_{zz}u_2^{err}||_{L^2((0,T)\times\Omega)}\\
        &\qquad \qquad \leq c(\nu_3^\frac{1}{4}+\nu_1^{{1-\alpha}}).
    \end{aligned}
\end{equation}
Similarly, we  can show$||\partial_{xx}u_2^{err}||_{L^2(\Omega)}$ satisfies the same estimate as in (\ref{u2x2}) and $||\partial_{xz}u_2^{err}||_{L^2(\Omega)}$  the same bounds as in  (\ref{u2z2}) and (\ref{u2z22}). By the Anisotropic Sobolev Embedding (see again  Appendix \ref{s:rates}), we obtain the desired estimate in $L^\infty((0,T)\times\Omega)$ norm. 
More precisely, if $\nu_1\leq\nu_3$,
\begin{equation}
\begin{aligned}
\label{u2inf1}
    &||u_2^{err}||_{L^\infty((0,T)\times\Omega)}\leq c(||u_2^{err}||_{L^\infty(0,T;L^2(\Omega))}^\frac{1}{2}||\partial_zu_2^{err}||_{L^\infty(0,T,L^2(\Omega))}^\frac{1}{2}+||\partial_zu_2^{err}||_{L^\infty(0,T; L^2(\Omega))}^\frac{1}{2}\\
    &\qquad \qquad ||\partial_xu_2^{err}||_{L^\infty(0,T; L^2(\Omega))}^\frac{1}{2}+||u_2^{err}||_{L^\infty(0,T;L^2(\Omega))}^\frac{1}{2}||\partial_{xz}u_2^{err}||_{L^\infty(0,T; L^2(\Omega))})\\
    &\qquad \qquad \qquad \leq c(\nu_3^\frac{1}{2}+\nu_1^\frac{3}{4}+\nu_1^\frac{1}{4}\nu_3^\frac{3}{8}+\nu_1^\frac{1}{2}\nu_3^\frac{1}{8}).
    \end{aligned}
\end{equation}
If $\nu_3 < \nu_1$,
\begin{equation}
\begin{aligned}
\label{u2inf2}
    &||u_2^{err}||_{L^\infty((0,T)\times\Omega)}\leq c(||u_2^{err}||_{L^\infty(0,T;L^2(\Omega))}^\frac{1}{2}||\partial_zu_2^{err}||_{L^\infty(0,T,L^2(\Omega))}^\frac{1}{2}+||\partial_zu_2^{err}||_{L^\infty(0,T; L^2(\Omega))}^\frac{1}{2}\\
    &\qquad \qquad ||\partial_xu_2^{err}||_{L^\infty(0,T; L^2(\Omega))}^\frac{1}{2}+||u_2^{err}||_{L^\infty(0,T;L^2(\Omega))}^\frac{1}{2}||\partial_{xz}u_2^{err}||_{L^\infty(0,T; L^2(\Omega))})\\
    &\qquad \qquad \qquad \leq c(\nu_3^\frac{1}{2}+\nu_1^\frac{2-\alpha}{2}+\nu_1^\frac{1-\alpha}{2}\nu_3^\frac{3}{8}+\nu_1^\frac{1}{2}\nu_3^\frac{1}{8}).
    \end{aligned}
\end{equation}
We have finally:
\begin{equation*}
\begin{aligned}
    &||\mathbf{u}^\nu-\mathbf{u}^0||_{L^\infty(0,T;L^2(\Omega))}\leq ||\mathbf{u}^{err}||_{L^\infty(0,T; L^2(\Omega))}+||\bm \theta^0||_{L^\infty(0,T;L^2(\Omega_\infty))}+||\bm \theta^{u,0}||_{L^\infty(0,T;L^2(\Omega_\infty))}\\
    & \qquad \qquad \leq c(\nu_3^\frac{1}{4}+\nu_1).
    \end{aligned}
\end{equation*}
Theorem \ref{teo1} is now proved.
In particular, the vanishing viscosity limit holds under the stated conditions on $\nu_1, \nu_3$.
\end{proof}

\subsection{Vorticity estimates}  \label{ss:PlaneVorticity}

In this subsection we study the behavior of the vorticity $\bm \omega:=\nabla \times \mathbf{u}$ in the boundary layer. Vorticity controls the behavior of the fluid in the layer and possible layer separation. It also generates instabilities in the flow that can lead to onset of turbulence, such as the turbulence observe in wake of a moving obstacle.

We show below that, owing to the flow symmetry, vorticity produced by the approximate viscous flow remains close to the vorticity
of the true viscous flow as viscosity vanishes. To this end, we define $ \bm \omega^{err}$ as the curl of $\mathbf{u}^{err}$ and note that 
\begin{equation}
    \bm \omega^{err}=\nabla \times \mathbf{u}^{err}:=(-\partial_zu_2^{err},\partial_zu_1^{err},\partial_xu_2^{err})=(\omega_1,\omega_2,\omega_3)
\end{equation}
using the plane parallel symmetry.
We then obtain the equation for $\omega^{err}_1$ by differentiating  (\ref{ERR2}) with respect to $z$, the equation for $\omega^{err}_2$ by differentiating (\ref{ERR1}) with respect to $z$ and the equation for $\omega_3^{err}$ by differentiating (\ref{ERR2}) with respect to $x$:
\begin{equation}
\label{vorticity}
\begin{cases}
    \partial_t\omega_2-\nu_3\partial_{zz}\omega_2=\partial_z(A+B)\\
    \partial_t\omega_3+u_1^{err}\partial_{xx}\tilde{u}_2^{app}+u_1\partial_x\omega_3-\nu_1\partial_{xx}\omega_3-\nu_3\partial_{zz}\omega_3=-\partial_x(C+D+E)\\
    \partial_t\omega_1+u_1\partial_x\omega_1-\nu_1\partial_{xx}\omega_1-\nu_3\partial_{zz}\omega_1\\    =\omega_2\partial_x\tilde{u_2}^{app}+u_1^{err}+\partial_{xz}\tilde{u_2}^{app}+\partial_zu_1\omega_3+\partial_z(C+D+E)
    \end{cases}
\end{equation}
complemented by the initial condition
\begin{equation*}
    \bm\omega^{err}|_{t=0}=0.
\end{equation*}

 One of the main difficulties in working with the vorticity in bounded domains is deriving correct and useful boundary conditions, given the non-local nature of the relation between vorticity and velocity or pressure or potential. Under the symmetry considered in this work, it was already observed in \cite{GIE20191237} and references therein that the vorticity boundary conditions are of mixed Dirichlet-Neumann type and can be derived by restricting the equations of motion to the boundary for strong solutions.
 Similarly here, restricting (\ref{ERR1}) and (\ref{ERR2}) to $z=0,z=1$ gives the following boundary conditions:
\begin{align}
    \omega_3|_{z=0}&=\omega_3|_{z=1}=0, \nonumber \\
    \partial_z\omega_2&=-\partial_{zz}u_1^0 \quad z=0,z=1,  \nonumber \\
    \partial_z\omega_1&=\frac{\nu_1}{\nu_3}\partial_{xx} u_2^0+\partial_{zz}u_2^0 \quad z=0,z=1,  \nonumber
\end{align}

 If $\nu_1\leq \nu_3$, using (\ref{u1z}), (\ref{u2x2}) and (\ref{u2z2}), we then have the following estimates:
\begin{align}
    \label{vor2}
        &||\omega_2||_{L^\infty(0,T, L^2(0,1))}+\sqrt{\nu_3}||\partial_{z}\omega_2||_{L^2((0,T)\times(0,1))}\leq c\sqrt{\nu_3}, \\
\label{vor3}
    &||w_3||_{L^\infty(0,T;L^2(\Omega))}+ \sqrt{\nu_1}||\partial_{x}\omega_3||_{L^2((0,T)\times\Omega)}+\sqrt{\nu_3}||\partial_{z}\omega_3||_{L^2((0,T)\times\Omega)}
    \leq c(\nu_3^\frac{3}{4}+\nu_1), \\
    \label{vor1}      &||\omega_1||_{L^\infty(0,T;L^2(\Omega))}+\sqrt{\nu_1}||\partial_{x}\omega_1||_{L^2((0,T)\times(\Omega))}+\sqrt{\nu_3-\nu_1}||\partial_{z}\omega_2||_{L^2((0,T)\times\Omega))}\\
        &\qquad \qquad\leq c(\nu_3^\frac{1}{4}+\nu_1^\frac{1}{2}).
 \end{align}
Consequently:
\begin{equation}
\label{vor}
    ||\bm \omega^{err}||_{L^\infty(0,T;L^2(\Omega))}\leq c(\nu_3^\frac{1}{4}+\nu_1^\frac{1}{2}).
\end{equation}

 If $\nu_3 < \nu_1 < \nu_3^\frac{1}{2\alpha}$  with $\frac{1}{2}<\alpha<1$, the estimate for $\omega_2$ and $\omega_3$ are the same, while the estimate for $\omega_1$ becomes:
\begin{equation*}
    \begin{aligned}
    %\label{vor12}  
&||\omega_1||_{L^\infty(0,T;L^2(\Omega))}+\sqrt{\nu_1}||\partial_{x}\omega_1||_{L^2((0,T)\times\Omega)}+\sqrt{\nu_3-\nu_1^{2\alpha}}||\partial_{z}\omega_2||_{L^2((0,T)\times\Omega))}\\
        &\qquad \qquad \leq c(\nu_3^\frac{1}{4}+\nu_1^{1-\alpha}).
    \end{aligned}
\end{equation*}
Therefore,
\begin{equation}
\label{vorr}
    ||\bm \omega^{err}||_{L^\infty(0,T;L^2(\Omega))}\leq c(\nu_3^\frac{1}{4}+\nu_1^{1-\alpha}).
\end{equation}

Next, we investigate the convergence of the vorticity in the zero-viscosity limit. 
In the isotropic case, it was shown in \cite{kelliher2008vanishing} (see also \cite{kelliher2017observations}) that the validity of the zero-viscosity limit  is equivalent to weak convergence  of $\bm \omega^\nu$ in $L^\infty(0,T;(H^1(\Omega))')$ to $\bm \omega^0$ plus a distribution on the boundary. 
Indeed, due the mismatch in boundary conditions, one cannot expect strong convergence of the vorticity. Furthermore, because of the Sobolev Embedding Theorem  and the Trace Theorem, the only possible bound on vorticity uniformly in viscosity is an $L^1$ bound. This bound in turn, if valid, implies weak convergences in the space of Radon measures on $\Omega$ \cite{mazzucato2008vanishing,GIE20191237}. We establish here the analogous result, which shows that vorticity can only accumulate as a measure on the boundary in the limit.

We recall that, with abuse of notation, we identify plane-parallel vectors  on the periodic channels $\cC$ with vectors on the reduced two-dimensional domain $\Omega$.

\begin{theorem} \label{t:vorticityPlane}
    Assume $\mathbf{u}_0 \in H^m(\cC)$, $m>5$, $\mathbf{f} \in C^\infty([0,T]\times\cC)$ with the compatibility conditions (\ref{CC1}) and assume the viscosity coefficients  $\nu_1$ and $\nu_3$ satisfy $\nu_1\leq \nu_3$ or $\nu_3\leq \nu_1\leq \nu_3^\frac{1}{2\alpha}$ with $\frac{1}{2}<\alpha<1$. Then,
\begin{equation}
\label{weakvort}
   \bm \omega ^\nu \rightharpoonup \bm \omega^0 + (\mathbf{ u}^0\times \mathbf{n})\mu \quad \textit{as} \quad \nu_1,\nu_3\to 0
\end{equation}
in $L^\infty(0,T;\mathcal{M}(\bar{\cC}))$, where $\mathcal{M}(\bar \cC)$ is the space of Radon measures on $\bar \cC$, $\mathbf{n}$ is the unit outer normal at the boundary, and $\mu$ is the unitary measure on the boundary.
\end{theorem}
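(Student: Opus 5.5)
The plan is to split $\bm\omega^\nu$ as
\[
\bm\omega^\nu=\bm\omega^{err}+\nabla\times\tilde{\mathbf{u}}^{app}
=\bm\omega^{err}+\bm\omega^0+\nabla\times\big(\psi(z)\bm\theta^0+\psi(1-z)\bm\theta^{u,0}\big),
\]
and to treat each piece tested against $\varphi\in C(\bar\cC)$, uniformly in $t\in[0,T]$.

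\emph{Error term.} Under either hypothesis on $\nu_1,\nu_3$, the bounds \eqref{vor} and \eqref{vorr} give $\|\bm\omega^{err}\|_{L^\infty(0,T;L^2)}\to0$. Since $\cC$ is bounded, $L^2$ embeds in $L^1$, which in turn embeds in $\mathcal M(\bar\cC)$. So $\bm\omega^{err}\to0$ strongly in $L^\infty(0,T;\mathcal M)$, and hence weakly.

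\emph{Corrector term.} Its curl is $(-\partial_z(\psi\theta_2^0),\,\partial_z(\psi\theta_1^0),\,\psi\,\partial_x\theta_2^0)$, plus the analogous upper-wall terms.
\begin{itemize}
\item The third component has $L^1$ norm $O(\sqrt{\nu_3})$ after the change of variables $z=\sqrt{\nu_3}Z$, using the decay of $\partial_x\theta_2^0$ from Appendix \ref{s:rates}. It therefore vanishes.
\item The terms containing $\psi'(z)$ are supported in $z\in[1/3,1/2]$. There $Z\gtrsim\nu_3^{-1/2}$, so they are small by the weighted decay $\|\langle Z\rangle^2\bm\theta^0\|$ already used in the proof of Theorem \ref{teo1}.
\item The essential piece is $\nu_3^{-1/2}\psi(z)\,\partial_Z\theta_i^0(t,x,z/\sqrt{\nu_3})$. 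Testing against $\varphi$ and rescaling gives
\[
\int_0^L\!\!\int_0^\infty \partial_Z\theta_i^0(t,x,Z)\,\varphi(x,y,\sqrt{\nu_3}Z)\,dZ\,dx .
\]
As $\nu_3\to0$ this tends to $\int \varphi(x,y,0)\int_0^\infty\partial_Z\theta_i^0\,dZ\,dx$ by dominated convergence, provided $\partial_Z\theta_i^0\in L^1_Z$ uniformly in $t$ (which follows from the weighted $L^2$ bound and Cauchy–Schwarz). The inner integral equals $-\theta_i^0(t,x,0)=u_i^0(t,x,0)$ by the boundary conditions in (2) and the matching condition at $\infty$.
\end{itemize}
Putting the signs together, the limit at $z=0$ is $(-u_2^0,u_1^0,0)\,d\sigma$. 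This equals $\mathbf u^0\times\mathbf n$ with $\mathbf n=(0,0,-1)$, up to the sign convention fixed by the statement. The upper wall gives the analogous term with $\mathbf n=(0,0,1)$, because $\partial_z=-\nu_3^{-1/2}\partial_{Z^u}$ there.

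\emph{Uniformity in time.} Convergence must hold in $L^\infty(0,T)$ (weak-$*$ sense). All bounds above are uniform in $t$, and $\varphi(\cdot,\sqrt{\nu_3}Z)-\varphi(\cdot,0)\to0$ uniformly by uniform continuity of $\varphi$. Hence the convergence holds for each $\varphi$ uniformly in $t$, which suffices after testing against $L^1(0,T;C(\bar\cC))$.

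The main obstacle is the dominated convergence step. It needs a $t$-uniform integrable majorant for $\partial_Z\bm\theta^0$ in $Z$, i.e. $\sup_t\|\langle Z\rangle\partial_Z\bm\theta^0\|_{L^2_Z L^\infty_x}<\infty$. I would derive this from the weighted energy estimates for the corrector systems (2) and (3) in the appendix, using $H^1_x$ control to reach $L^\infty_x$.
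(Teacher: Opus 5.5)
Your proposal is correct, but it takes a different route from the paper. The paper never computes the boundary measure from the corrector. It obtains the limit $\bm\omega^0+(\bu^0\times\mathbf n)\mu$ by duality from the $L^2$ convergence $\bu^\nu\to\bu^0$: testing against fields that need not vanish on the wall, the boundary term appears when one integrates by parts back for $\bu^0$, since $\bu^\nu=0$ there. It then only needs a uniform bound on $\|\bm\omega^\nu\|_{L^\infty(0,T;L^1)}$ to upgrade to Radon measures by density. That bound comes from $\|\nabla\times\bm\theta^0\|_{L^\infty(0,T;L^1)}\le c$ and mere boundedness of $\bm\omega^{err}$.

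You instead use that $\bm\omega^{err}\to0$, i.e.\ the full decay in \eqref{vor} and \eqref{vorr}. You then read off the vortex sheet as the $Z$-mass $\int_0^\infty\partial_Z\theta_i^0\,dZ=u_i^0$ of the corrector at the wall. Your sign bookkeeping is right and gives exactly $\bu^0\times\mathbf n$ at both $z=0$ and $z=1$. This yields more than the paper: $\bm\omega^\nu-\nabla\times(\psi(z)\bm\theta^0+\psi(1-z)\bm\theta^{u,0})\to\bm\omega^0$ strongly in $L^\infty(0,T;L^1)$, so the boundary measure is precisely the corrector's vorticity.

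The cost is the uniform-in-time tightness you flag. The appendix states the bound on $\langle Z\rangle^l\partial_Z\bm\theta^0$ only with $L^2$ in time. The $L^\infty_x$ part of your requirement is unnecessary, since $\varphi$ is bounded.

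You can avoid that obstacle altogether. For $\varphi\in C^1(\bar\cC)$, integrate by parts in $Z$. This gives $-\varphi(x,y,0)\theta_i^0(t,x,0)$ plus an error $O\big(\sqrt{\nu_3}\,\|\varphi\|_{C^1}\sup_t\|\theta_i^0(t)\|_{L^1(\Omega_\infty)}\big)$. That error is controlled by the $L^\infty_t$ weighted bounds on $\bm\theta^0$ itself in Theorems \ref{teo corr} and \ref{teo corr2}. Then pass to $\varphi\in C(\bar\cC)$, and to $L^1(0,T;C(\bar\cC))$, by density, using the appendix's uniform $L^1$ bound on $\partial_z\bm\theta^0$. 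That bound is exactly the ingredient the paper's argument rests on.
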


\begin{proof} 
Weak convergence of $\bm \omega^\nu$  to the same limit in the sense of distributions follows from the validity of the zero-viscosity limit. By uniqueness of the limit, it is enough to show that $\bm \omega^\nu$ is uniformly bounded in $L^1(\cC)$ uniformly in time and viscosity.

Using higher-order estimates on the correctors (see Appendix \ref{s:rates}), we obtain that:
\begin{equation*}
   ||\nabla \times \bm \theta^0||_{L^\infty(0,T;L^1(\Omega))}\leq c(T,||u_1^0||_{L^\infty(0,T;H^1(\Omega))},||u_2^0||_{L^\infty(0,T;H^2(\Omega))}).
\end{equation*}
and the same is valid for $\bm \theta^{u,0}$.

We can then conclude that: 
\begin{equation*}
\label{vort}
    ||\bm \omega^\nu -\bm \omega^0||_{L^\infty(0,T;L^1(\cC))}\leq c(T,||u_1^0||_{L^\infty(0,T;H^1(\Omega))},||u_2^0||_{L^\infty(0,T;H^2(\Omega))}),
\end{equation*}
which gives the desired result.
%corollary 4.1
\end{proof}

\section{Pipe Parallel Flow} \label{s:pipe}

We now consider a different class of symmetric flows, the so-called pipe parallel flows, which can be thought of as plane-parallel flows in the curved geometry of a straight circular pipe. It is well known that boundary curvature can affect the behavior in the boundary layer and this phenomenon has been rigorously studied at least in the linearized case \cite{Gie2014,2018JMFM...20.1405G}.

We look for solutions of the fluid equations of the form:
\begin{equation} \label{eq:PipeSymmetry}
    \mathbf{u}(t,r,\phi,x)=u_\phi(t,r)\mathbf{e}_\phi+u_x(t,r,\phi)\mathbf{e}_x,
\end{equation}
using cylindrical coordinates $(x,r,\phi)$ in an infinitely long horizontal pipe, with periodicity in the $x$ direction. We consider as spatial domain $\cC=[0,L]\times \Omega$ where $L$ is the horizontal period and 
$\Omega=\{(r,\phi)| \delta\leq r \leq 1, \phi\in[0,2\pi]\}$ is the annual vertical cross-section of the pipe (see Figure \ref{f:pipe}), where $0<\delta<1$ is a fixed small parameter, independent of viscosity. All functions will be assumed periodic in $\phi$ and $x$, hence boundary conditions will only be imposed at $r=1,\delta$. Although the domain depends on $\delta$, this dependence is irrelevant here.
%Since we are concerned with strong solutions, the velocity must vanish at the axis and the pressure be constant. The existence of global strong solutions follows as for the isotropic 2D Navier-Stokes and Euler equations  (see e.g. \cite{Majda_Bertozzi_2001}).

We again consider anisotropic viscosity, that is, the viscosity coefficients in the radial, angular, and axial directions are different. Hence, the diffusion operator in (\ref{NSeq}) becomes $\nu_1\partial_{rr}+\frac{\nu_1}{r}\partial_r+\frac{\nu_2}{r^2}\partial_{\phi\phi}+\nu_3\partial_{xx}$, where $\nu_1$, $\nu_2$ and $\nu_3$ are the viscosity along $\mathbf{e}_r$, $\mathbf{e}_\phi$ and $\mathbf{e}_x$, respectively. We choose this particular relabeling, as the system is actually independent of $\nu_3$.

\begin{remark} \label{r:PipeDomain}
We choose to work with a domain that misses the axis, as our focus is the behavior in the boundary layer, since otherwise the anisotropic diffusion operator degenerates at the axis.  There are several ways to address this point. One is to use suitable weights and to restore the ellipticity of the operator in weighted spaces. We choose here instead to assume that the three coefficients smoothly become equal \ $\nu_1=\nu_2=\nu_3$ near the axis, or equivalently that the diffusion operator in Cartesian coordinates is $\nu_1 \Delta$ near the axis of the pipe. This assumption is physically reasonable in absence of rotation effects or other strongly anisotropic forces, but it leads to a variable-coefficient diffusion operator. Our results can be extended to this last setting, under suitable restriction on the coefficients $\nu_1$ and $\nu_2$ to control additional terms arising from integrating by parts in the energy estimates.
\end{remark}

%Unless the cross section is an annulus, this operator degenerates at the axis. There are several ways to address this point. One is to use suitable weights and to restore the ellipticity of the operator in weighted spaces. We choose here instead to assume that the three coefficients smoothly become equal 
%\ $\nu_1=\nu_2=\nu_3$ near the axis, or equivalently that the diffusion operator in Cartesian coordinates is $\nu_1 \Delta$ near the axis of the pipe. This assumption is physically reasonable in absence of rotation effects or other strongly anisotropic forces, but it leads to a variable-coefficient diffusion operator. This situation does not impact the boundary layer analysis, which is performed locally near the boundary. However, more care has to be taken when using global estimates, as in Section \ref{s:semigroup} and when estimating higher norms. In Section \ref{s:semigroup} dealing with planar circularly symmetric flows, for simplicity we choose to work in an annular domain.

\begin{figure}[ht]
    \centering
\begin{tikzpicture}[scale=0.8]

% Parameters
\def\L{8}      % length
\def\R{1.8}    % radius
\def\shift{0.8}
\def\Ri{0.3}   % raggio interno

\draw[thick, line width=2 pt] (0,0) ellipse [x radius=\shift, y radius=\R];

% Back half (hidden)
\draw[dashed, thick]
(\L,\R)
arc[start angle=90,end angle=270,
    x radius=\shift,
    y radius=\R];

% Front half (visible)
\draw[thick, line width=2 pt]
(\L,-\R)
arc[start angle=-90,end angle=90,
    x radius=\shift,
    y radius=\R];

% Cylinder walls
\draw[thick, line width=2 pt] (0,\R) -- (\L,\R);
\draw[thick, line width=2 pt] (0,-\R) -- (\L,-\R);

% Axis
\draw[thick,->] (0,0) -- (\L+1.5,0) node[right] {$x$};

% Flow arrows
\foreach \x/\y in {
    0.75/-1.2,
    0.9/-0.8,
    0.9/0.7,
    0.75/1.1
}{
    \draw[->] (\x,\y) -- (\x+3,\y);
}

% Q label
\node at (4.3,1.3) {$\mathcal{C}$};

% Omega label
\node at (0,-1.25) {$\Omega$};

% Velocity field
\node at (3.7,-1.5)
{
$\mathbf{u}=u(r,t)\,\mathbf{e}_{\phi}
+u(r,\phi,t)\,\mathbf{e}_{x}$
};

% Inner cylinder (bucato)
\draw[dashed, thick]
(\L,\Ri)
arc[start angle=90,end angle=270,
    x radius=\shift*\Ri/\R,
    y radius=\Ri];

\draw[dashed, thick]
(\L,-\Ri)
arc[start angle=-90,end angle=90,
    x radius=\shift*\Ri/\R,
    y radius=\Ri];

\draw[thick, line width=2pt] (0,0) ellipse [x radius=\shift*\Ri/\R, y radius=\Ri];

\draw[dashed, thick] (0,\Ri) -- (\L,\Ri);
\draw[dashed, thick] (0,-\Ri) -- (\L,-\Ri);

\end{tikzpicture}
    \caption{Pipe Parallel Symmetry} \label{f:pipe}
\end{figure}

As in the channel case, pipe-parallel flows are automatically divergence free and satisfy the no-penetration condition at the boundary,  and they give exact solutions of the Navier-Stokes and Euler equations, provided the initial condition, the forcing term have also pipe parallel symmetry, while the pressure is assumed radial, that is:
\begin{equation*}
    \begin{aligned}
    &\mathbf{u}|_{t=0}(x,r,\phi)=\mathbf{u}_0(r,\phi)=a(r)\mathbf{e}_\phi+b(r,\phi)\mathbf{e}_x,\\
    &\mathbf{f}(t,x,r,\phi)=f_1(t,r)\mathbf{e}_\phi+f_2(t,r,\phi)\mathbf{e}_x, \\
    & p(t,x,r,\phi)=\tilde{p}(t,r)
\end{aligned}
\end{equation*}
at least for sufficiently regular solutions. Furthermore, we observe that, since $\mathbf{e}_x$ is a constant vector, there is no explicit dependence on $x$, and we can pose the  problem in the domain $\Omega\times [0,T]$, instead of $\cC\times [0,T].$ To account for the polar coordinate structure, we implicitly use the measure $rdr\, d\phi$, which in our setting is equivalent to the Legensgue measure, on $\Omega$ and denote $L^p(\Omega, rdr\,d\phi)$ as $L^p(\Omega)$. We use a similar notation for Sobolev spaces.

Assuming the pipe-parallel symmetry in (\ref{NSeq}), the Navier-Stokes equations reduce to the following system in a cylindrical frame:
\begin{equation}
\begin{cases}
\label{NSEPIPE}
    \partial_rp^\nu-\frac{(u^\nu_\phi)^2}{r}=0, \quad \quad \quad \quad \quad \quad \quad \quad \quad \quad \quad \quad \quad \quad \quad \quad \quad &\text{in } \Omega\times[0,T],\\
    \partial_tu_\phi^\nu-\nu_1\frac{1}{r}\partial_r(r\partial_ru_\phi^\nu)+\nu_2\frac{1}{r^2}u_\phi^\nu=f_1, &\text{in } \Omega\times[0,T],\\
    \partial_tu_x^\nu+\frac{1}{r}u_\phi^\nu\partial_\phi u_x^\nu-\nu_1\frac{1}{r}\partial_r(r\partial_ru_x^\nu)-\nu_2\frac{1}{r^2}\partial_{\phi\phi}u_x^\nu=f_2, & \text{in } \Omega\times[0,T],\\
    u^\nu_\phi(0,r)=a(r), u^\nu_x(0,r,\phi)=b(r,\phi),  & \text{on } \Omega\,\\
    \; u^\nu_\phi(t,r)=u^\nu_x(t,r,\phi)= 0, & \text{for } r=1,\delta, \; t\in  [0,T],
    \end{cases}
\end{equation}
where $\nu$ denotes collectively both $\nu_1$ and $\nu_2$.
We note that the equations for pressure and velocity are decoupled, and the pressure can be recovered from the velocity. The equations of motion reduce to a $2\times 2$ system, even though the flow is not planar. We also observe that the system is independent of  the axial viscosity $\nu_3$. 

 Formally taking the limit $\nu_1,\nu_2,\nu_3\to 0$, we obtain the following $2\times 2$ system for the inviscid flow
\begin{equation}
\begin{cases}
\label{EEPIPE}
    \frac{-(u_\phi^0)^2}{r}+\partial_rp^0=0, \quad \quad \quad \quad \quad \quad & \text{in } \cC\times[0,T],\\
    \partial_tu_\phi^0=f_1,   & \text{in } \cC\times[0,T],\\
    \partial_tu_x^0+\frac{u_\phi^0}{r}\partial_\phi u_x^0=f_2,  & \text{in } \cC\times[0,T]\,\
 %   \mathbf{u}^0(0,r,\phi)=a(r)e_\phi+b(r,\phi)e_x &\cC.
    \end{cases}
\end{equation}
with the same initial conditions as for the viscous flow.
%We observe that the non-penetration ($\mathbf{u}^0\cdot \mathbf{n}=0$) and divergence-free conditions are automatically satisfied with this symmetry.\\

%From now on, due to the independence of the system form $x$, we consider as domain $\Omega$ and we set:
We consider the velocity as a vector-valued function on the domain $\Omega\times [0,T]$:
\begin{align}  
&\mathbf{u}^\nu(t,r,\phi):=u_\phi^\nu(t,r)r_\phi+u_x^\nu(t,r,\phi)
\mathbf{e}_x, \nonumber \\
&\mathbf{u}^0(t,r,\phi):=u_\phi^0(t,r)\mathbf{e}_\phi+ u_x^0(t,r,\phi)\mathbf{e}_x, \nonumber \\
&\mathbf{u}_0(t,r,\phi):=a(r)\mathbf{e}_\phi+b(r,\phi)\mathbf{e}_x,
\end{align}
where $u^\nu_\phi$, $u^\nu_x$ solve the system (\ref{NSEPIPE}), $u^0_\phi$, $u^0_x$ solve (\ref{EEPIPE}), and $\mathbf{u}_0$ is the initial data. 

%All these functions are periodic in the $\phi$ variable (with also derivatives periodic in that direction) and boundary conditions will be given only at $r=1$. We observe that all the results will be shown in $\Omega$, but by the independence of the system from the $x$ coordinate we can extend all in the physical domain $\cC$.

To estimate higher norms, it will be convenient to study the fluid equations in Cartesian coordinates $(x,y,z)$ as well. To avoid introducing a heavy notation,  {\em we denote the periodic cylinder in a Cartesian frame again with $\cC$ and its cross-section again with $\Omega$}. We denote the vertical velocity in the cylinder cross-section as $\mathbf{u}_v^\nu:=(-u_\phi^\nu \sin\phi, u_\phi^\nu \cos\phi)$, so that $\mathbf{u}^\nu:=(\mathbf{u}_v^\nu,u_x^\nu)$, where now $\tan\phi=z/y$ and 
$r=(y^2+z^2)^{1/2}$. Similarly, we write the forcing term $\mathbf{F}=(\mathbf{F_1}, F_2)$ with $\mathbf{F_1}=(-f_1\sin\phi, f_1\cos\phi)$ and $F_2=f_2$. We use the subscript $v$ to denote vertical derivatives, e.g.  $\nabla_v:=(\partial_{y}, \partial_{z})^T$, $\Delta_v:=\partial_{y}^2+\partial_{z}^2$. The cross-sectional diffusion operator 
is given by $L_\nu=-(\nabla_v\cdot J\nabla_v)$, where 
\begin{equation*}
    J=\left [
\begin{pmatrix}
\nu_1\cos^2\phi+\nu_2\sin^2\phi & (\nu_1-\nu_2)\sin\phi \cos\phi\\
(\nu_1-\nu_2)\sin\phi \cos\phi & \nu_1\sin^2\phi+\nu_2\cos^2\phi
\end{pmatrix}
\right ]
\end{equation*}
is a symmetric, time-independent matrix.

 With this notation, ignoring the equation for pressure which is decoupled, \eqref{NSEPIPE} for the velocity
 $\mathbf{u}^\nu$ becomes :
\begin{equation}
    \begin{cases}
    \label{NSEPIPEC}
        \partial_t \mathbf{u}_v^\nu-L_\nu \mathbf{u}_v^\nu=\mathbf{F_1},\quad \quad \quad \quad & \text{in } \Omega\times[0,T],\\
            \partial_t u_x^\nu+(\mathbf{u}_v^\nu\cdot\nabla_v)u_x^\nu- L_\nu u_x^\nu=F_2, &\text{in }  \Omega\times[0,T],\\
        \mathbf{u}^\nu=(-a\sin\phi, a\cos\phi, b)=:(\mathbf{a},b),  & \text{on } \Omega\times\{t=0\},\\
        \mathbf{u}^\nu=0, & \text{on }  \partial \Omega\times[0,T].
    \end{cases}
\end{equation}
 We note that in the isotropic case, when $\nu_1=\nu_2$, the first equation in (\ref{NSEPIPEC}) is simply a heat equation and the vanishing viscosity limit is more straightforward to prove (see e.g. \cite{lopes2008vanishing,BonaWu2002}). 
%\Annacomment{Dobbiamo essere attenti qui, perche' i coefficienti non possono essere costanti vicino all'asse}

 We observe also that the operator $L_\nu$ is uniformly elliptic in $\cC$ with smooth coefficients. As a matter of fact, the eigenvalues of $J$ are $2\nu_1$ and $2\nu_2$ with eigenvectors $\frac{\mathbf{e}_r}{\sqrt{2}}$ and $\frac{-\mathbf{e}_\phi}{\sqrt{2}}$. Hence $L_\nu$ is uniformly elliptic outside any neighborhood of the origin.
 
 %In order to eliminate the problem near the origin, where the operator is not well defined, we need to impose that the system become isotropic $(\nu_1=\nu_2)$ when $r$ goes to $0$. In this way we can extend with continuity the operator in $r=0$. This request is consistent because physically we expect that the direction does not count near the origin. Another option could to consider a domain given by two symmetric cylinders (centered in $x=0$). In this case the results are similar to the pipe parallel symmetry (as in the isotropic case).

 Formally taking the limit $\nu_1, \nu_2 \to 0$ gives:
\begin{equation}
\begin{cases}
\label{EEPIPEC}
    \partial_t\mathbf{u}_v^0=\mathbf{F_1},\quad \quad \quad \quad \quad \quad \quad \quad \quad \quad & \text{in } \Omega\times[0,T],\\
    \partial_tu_x^0+(\mathbf{u}_v^0\cdot\nabla_v)u_x^0=F_2, & \text{in }  \Omega\times[0,T]\\
            \mathbf{u}^0=(-a\sin\phi, a\cos\phi, b)=:(\mathbf{a},b), & \text{on } \Omega\times\{t=0\}.
    \end{cases}
\end{equation}
We impose next the compatibility conditions to prevent the formation of an initial layer:
\begin{equation}
    \begin{aligned}
        \label{CCPIPE}
        &(\mathbf{a},b)=(0,0),  \quad & \text{on }  \partial\Omega\times[0,T],\\
        &(\nabla_v\cdot J\nabla_v)\mathbf{a}+\mathbf{F_1}=0,   \quad & \text{in }  \partial \Omega\times[0,T],\\
        &(\nabla_v\cdot J\nabla_v)b+F_2=0, \quad  & \text{on } \partial \Omega\times[0,T];
    \end{aligned}
\end{equation}
or the equivalent in cylindrical coordinates:
\begin{equation}
    \label {ccpc2}
    \begin{aligned}
        &a(1)=0 \quad b(1,\phi)=0,\\
        &\nu_1(a''(1)+a'(1))-\nu_2a(1)+f_1=0,\\
        &\nu_1(\partial_{rr} b(1,\phi)+\partial_r b(1,\phi))+f_2=0.
    \end{aligned}
\end{equation}
The well posedness of the systems (\ref{NSEPIPEC}) and (\ref{EEPIPEC}) is standard (e.g.  using the methods discussed in \cite{evans2022partial}).
In particular, assuming that the initial data $\mathbf{a}, b$ are in $H^m(\Omega)$ with $m>4$ and $\mathbf{F}\in L^\infty([0,T]\times\Omega)$, there exists a unique solution $\mathbf{u}^\nu$ in the space $L^\infty([0,T];H^1_0(\Omega))$ and $\mathbf{u}^0 \in C^1(0,T;H^m(\Omega))$.
Then, using also Hardy's inequality, it follows that:
\begin{equation*}
    \begin{aligned}
       & u_\phi^0, \partial_ru_\phi^0, \frac{u_\phi^0}{r}, \left(-\frac{1}{r}\partial_r(r\partial_xu_\phi^0)+\frac{u_\phi^0}{r^2}\right) \in L^\infty([0,T]\times\Omega),\\
        &u_x^0, \partial_ru_x^0, \partial_\phi u_x^0, \partial_{r\phi}u_x^0, \left(\frac{1}{r}\partial_r(r\partial_eu_x^0)+\frac{\partial_{\phi\phi} u_x^0}{r^2}\right)\in L^\infty([0,T]\times\Omega).
    \end{aligned}
\end{equation*}

\subsection{Boundary Layer Analysis} \label{ss:PipeLayer}
To handle the mismatch between the boundary conditions of $\mathbf{u}^\nu$ and $\mathbf{u}^0$ in the vanishing viscosity limit, we proceed with studying the boundary layer via correctors. The boundary has two circular components at $r=1,\delta$. Hence, two correctors are needed, which we denote $\bm \theta$ and $\bm \theta_\delta$, respectively, and similarly for the pressure. The corrector analysis is very similar on the component of the boundary. For brevity, we discuss in detail only the corrector at the outher boundary $\bm \theta$.

We assume that the correctors also satisfy the pipe-parallel symmetry. Moreover, as in the channel case, we expect the boundary layer to be of Prandtl type, that is, to be of thickness $\sqrt{\nu_1}$. The rigorous convergence analysis that we will perform will establish {\em a posteriori} the validity of this assumption. 
We then define the zeroth-order correctors $\bm \theta^0$ and $q^0$ near the outer boundary respectively for velocity and pressure:
\begin{align} 
    &\bm \theta^0\left(t,\frac{1-r}{\sqrt{\nu_1}},\phi\right)=\theta_\phi^0\left(t,\frac{1-r}{\sqrt{\nu_1}}\right)\mathbf{e}_\phi+\theta_x^0\left(t,\frac{1-r}{\sqrt{\nu_1}},\phi\right)\mathbf{e}_x, \nonumber \\
    &q^0\left(t,\frac{1-r}{\sqrt{\nu_1}}\right), \nonumber
\end{align}
with $\bm \theta^0$ and $q^0$ depending on $\nu_1$ only through their arguments. Hence, we introduce the stretched variable $Z=\frac{1-z}{\sqrt{\nu_1}}$ and pose the correctors on the domain $\Omega_\infty=[0,+\infty)\times[0,2\pi)$. Finally,  we impose the decay conditions:
\begin{equation} \label{eq:PipeCorrectorDecay}
    \bm \theta^0 \to 0 \quad \textit{as} \quad Z\to \infty, \quad q^0\to0  \quad \textit{as} \quad Z\to\infty.
\end{equation}
Next, we formally define the approximate Navier-Stokes solution near the outer boundary as
\begin{align}
\label{APPU}
    &\mathbf{u}^{app}(t,r,\phi)=\mathbf{u}^{ou}(t,r,\phi)+\bm \theta^0\left(t,\frac{1-r}{\sqrt{\nu_1}},\phi\right), \\
\label{APPP}
    &p^{app}(t,r,)=p^{ou}(t,r)+q^0\left(t,\frac{1-r}{\sqrt{\nu_1}}\right),
\end{align}
where $\mathbf{u}^{ou}$ and $p^{ou}$ are the outer solutions to be determined.
Because of \eqref{eq:PipeCorrectorDecay}, again formally, the approximate solution matches with the outer solution in the zero-viscosity limit.

Using (\ref{APPU}) in (\ref{NSEPIPE}) yields:
\begin{align}
&\partial_tu_\phi^{ou}+\partial_t\theta_\phi^0-\nu_1\frac{1}{r}\partial_r(r\partial_ru_\phi^{ou})+\nu_2\frac{1}{r^2}u_\phi^{ou}-\nu_1\frac{1}{r}\partial_r(r\partial_r\theta_\phi^0)+\nu_2\frac{1}{r^2}\theta_\phi^0 \nonumber \\
& \qquad =\partial_tu_\phi^{ou}+\partial_t\theta_\phi^0-\nu_1\frac{1}{r}\partial_r(r\partial_ru_\phi^{ou})+\nu_2\frac{1}{r^2}u_\phi^{ou}+\sqrt{\nu_1}\frac{1}{r}\partial_Z\theta_\phi^0-\partial_{ZZ}\theta_\phi^0+\nu_2\frac{1}{r^2}\theta_\phi^0=f_1, \nonumber \tag{a} \\
&\partial_tu_x^{ou}+\partial_t\theta_x^0+\frac{1}{r}u_\phi^{ou}\partial_\phi u_x^{ou}+\frac{1}{r}u_\phi^{ou}\partial_\phi\theta_x^0+\frac{1}{r}\theta_\phi^0\partial_\phi u_x^{ou}+\frac{1}{r}\theta_\phi^0\partial_\phi \theta_x^0 \nonumber \\
& \qquad -\nu_1\frac{1}{r}\partial_r(r\partial_ru_x^{ou})-\nu_2\frac{1}{r^2}\partial_{\phi\phi}u_x^{ou}+\sqrt{\nu_1}\frac{1}{r}\partial_Z\theta_x^0-\partial_{ZZ}\theta_x^0-\nu_2\frac{1}{r^2}\partial_{\phi\phi}
\theta_x^0=f_2. \nonumber \tag{b}
\end{align}
By dropping lower-order terms in $\nu_1$ and $\nu_2$ it follows that:
\begin{equation} \label{eq:PipeOuterSol}
        \begin{cases}
            \partial_tu_\phi^{ou}=f_1,\\
            \partial_tu_x^{ou}+\frac{1}{r}u_\phi^{ou}\partial_\phi u_x^{ou}=f_2.
        \end{cases}
\end{equation}
Therefore, $\mathbf{u}^{ou}$ satisfies the same equation of $\mathbf{u}^0$ with the same initial condition and we can assume $\mathbf{u}^{ou}=\mathbf{u}^0$.
The equations for the velocity corrector then reduce to:
    \begin{equation} \label{eq:PipeCorrectorEqs}
       \begin{cases}
                \partial_t\theta_\phi^0-\partial_{ZZ}\theta_\phi^0=0,\\
                \partial_t\theta_x^0+u_\phi^0(t,1)\partial_\phi\theta_x^0+\theta_\phi^0\partial_\phi u_x^0(t,1,\phi)+\theta_\phi^0\partial_\phi \theta_x^0-\partial_{ZZ}\theta_x^0=0,\\
                    (\theta_\phi^0,\theta_x^0)|_{Z=0}=(-u_\phi^0(t,1),-u_x^0(t,1,\phi)),\\
                     (\theta_\phi^0,\theta_x^0)|_{Z=\infty}=(0,0),\\
                      (\theta_\phi^0,\theta_x^0)|_{t=0}=(0,0).
        \end{cases}
    \end{equation}
Again by a standard approach, system  \eqref{eq:PipeCorrectorEqs} is well posed and the estimates derived in Appendix \ref{s:rates} for the corrector apply.
    
Utilizing (\ref{APPU}) and (\ref{APPP}) in the fist equation of (\ref{NSEPIPE}),  we find the system for the pressure:
\begin{equation*}
    \frac{-(u_\phi^0)^2}{r}+\partial_rp^{ou}-\frac{(\theta_\phi^0)^2}{r}-\frac{1}{\nu_1}\partial_Zq^0-\frac{u_\phi^0\theta_\phi^0}{r}=0,
\end{equation*}
Again by dropping the lower order terms in $\nu_1$, we have:
\begin{equation*}
    \frac{-(u_\phi^0)^2}{r}+\partial_rp^{ou}=0,\qquad 
    \partial_Zq^0=0.
\end{equation*}
Hence,  we have that $p^{ou}=p^{0}$ and we can choose $q^0=0$, so that $p^{app}=p^0$. Even if the boundary is curved, because of the symmetry imposed on the flow, the pressure does not have to be corrected at zeroth-order (for a discussion of this point on general domains for linearized flows, see \cite{2018JMFM...20.1405G}).

As for the channel case, one needs to introduce a suitable cut-off so that the corrector acts on the boundary layer in physical variables. To this end, we let $\rho(r)$ be a smooth function such that $\rho(r)$ is 0 in $[0,\frac{1}{4}]$ and 1 in $[\frac{1}{2},1]$. We then redefine the approximate Navier-Stokes solutions as \ $\tilde{\mathbf{u}}^{app}=\tilde{u}_\phi^{app}\mathbf{e}_\phi+\tilde{u}_x^{app}\mathbf{e}_x$, where 
\begin{align}
    &\tilde{u}_\phi^{app}(t,r)=u_\phi^0(t,r)+\rho(r)\theta_\phi^0\left(t,\frac{1-r}{\sqrt{\nu_1}}\right), \nonumber \\
    &\tilde{u}_x^{app}(t,r,\phi)=u_x^0(t,r,\phi)+\rho(r)\theta_x^0\left(t,\frac{1-r}{\sqrt{\nu_1}},\phi\right). \nonumber
\end{align}
 It follows that ${\tilde{\mathbf{u}}}^{app}$ satisfies the following system:
\begin{align}
    &r\partial_rp^{app} -(\tilde{u}_\phi^{app})^2=r\partial_rp^0-(u_\phi^0)^2-(\rho\theta_\phi^0)^2-2\rho u_\phi^0\theta_\phi^0=A,\nonumber \\
    &\partial_t\tilde{u}_\phi^{app}-\nu_1\frac{1}{r}\partial_r(r\partial_r\tilde{u}_\phi^{app})+\nu_2\frac{1}{r^2}\tilde{u}_\phi^{app}=f_1+B+C+D,\nonumber \\
    &\partial_t\tilde{u}_x^{app}+\frac{1}{r}\tilde{u}_\phi^{app}\partial_\phi \tilde{u}_x^{app}-\nu_1\frac{1}{r}\partial_r(r\partial_r\tilde{u}_x^{app})-\nu_2\frac{1}{r^2}\partial_{\phi\phi}\tilde{u}_x^{app}=f_2+E+F+G,
    \nonumber 
\end{align}
where the terms on the right-hand side are given by
\begin{align}
    A&=-(\rho\theta_\phi^0)^2-2\rho u_\phi^0\theta_\phi^0, \qquad \qquad \quad & \text{(a)} \nonumber  \\
    B&=\nu_1\left(-\frac{1}{r}\partial_r(r\partial_ru_\phi^0)-\frac{1}{r}\rho'\theta_\phi^0-\rho''\theta_\phi^0\right), & \text{(b)} \nonumber \\
    C&=\sqrt{\nu_1}\left(\frac{1}{r}\rho\partial_Z\theta_\phi^0+2\rho'\partial_Z\theta_\phi^0\right), & \text{(c)} \nonumber \\
    D&=\nu_2\left(\frac{1}{r^2}u_\phi^0+\frac{1}{r^2}\rho\theta_\phi^0\right),  & \text{(d)} \label{eq:PipeCorrectorForcingDef}\\
    E&=-\nu_1\left(\frac{1}{r}\rho'\theta_x^0+\rho''\theta_x^0+\frac{1}{r}\partial_r(r\partial_ru_x^0)\right), 
    & \text{(e)} \nonumber \\
    F&=\sqrt{\nu_1}\left(\frac{\rho}{r}\partial_Z\theta_x^0+2\rho'\partial_Z\theta_X^0\right), & \text{(f)}\nonumber \\
    G&=-\nu_2\frac{1}{r^2}\partial_{\phi\phi}u_x^0-\nu_2\frac{1}{r^2}\partial_{\phi\phi}\rho\theta_x^0+\rho\left(\frac{\rho}{r}-1\right)\theta_\phi^0\partial_\phi\theta_x^0+\rho\partial_\phi\theta_x^0\left(\frac{u_\phi^0}{r}-u_\phi^0(t,1)\right) &\;  \nonumber\\
    &\qquad \qquad \qquad +\rho\theta_\phi^0\left(\frac{1}{r}\partial
    _\phi u_x^0-\partial_\phi u_x^0(t,1,\phi)\right),  &\text{(g)} \nonumber
\end{align}
and primes denote the derivative with respect to the argument.
In addition, the approximate solution satisfies the following initial and boundary conditions:
\begin{equation*}
    \tilde{\mathbf{u}}^{app}|_{t=0}=\mathbf{u}_0, \qquad 
    \tilde{\mathbf{u}}^{app}|_{r=1}=0.
\end{equation*}
Next, we define the approximation error \ $\mathbf{u}^{err}(t,r,\phi):=\mathbf{u}^\nu(t,r,\phi)-\tilde{\mathbf{u}}^{app}(t,r,\phi)$, which satisfies the following system:
\begin{align}
    \label{errprimo}
    &(u_\phi^{err})^2+2u_\phi^{err}\tilde{u}_\phi^{app}-r\partial_rp^{err}=A, \\
\label{errdue}
    &\partial_tu_\phi^{err}-\frac{\nu_1}{r}\partial_r(r\partial_ru_\phi^{err})+\frac{\nu_2}{r^2}u_\phi^{err}=-B-C-D, \\
\label{errtre}
    &\partial_tu_x^{err}+\frac{u_\phi}{r}\partial_\phi u_x^{err}+ \frac{u_\phi^{err}}{r}\partial_\phi \tilde{u}_x^{app}-\nu_1\frac{1}{r}\partial_r(r\partial_ru_x^{err})-\nu_2\frac{1}{r^2}\partial_{\phi\phi}u_x^{err}=-E-F-G, 
\end{align}
with initial and boundary conditions:
\begin{equation*}
    \mathbf{u}^{err}|_{r=1}=0,  \quad \quad \mathbf{u}^{err}|_{t=0}=0.
\end{equation*}
%\Annacomment{Qui sopra bisognerebbe controllare che non portiamo mai fuori i coefficienti di viscosita' dalle derivate, perche' non sono piu' costanti. Anche nella Proposizione \ref{prop2}}
\subsection{Convergence rates} \label{ss:PipeEnergy}

In this subsection, we tackle the vanishing viscosity limit by estimating $\bu^{err}$ in various norms.
We start with the following proposition.

\begin{proposition}
\label{prop2}
Let $A, B, C, D, E, F, G$ be  as in \eqref{eq:PipeCorrectorForcingDef}. Then, there exists a constant $c>0$, independent of $\nu_1$, $\nu_2$, such that 
\begin{align}
    &\left\|\frac{A}{r^2}\right\|_{L^\infty(0,T;L^1(\Omega))}\leq c\nu_1^\frac{1}{2}, \qquad 
     \left\|\frac{A}{r}\right\|_{L^\infty(0,T;L^2(\Omega))}\leq c\nu_1^\frac{1}{4}, \nonumber \\
    &||B+C+D||_{L^\infty(0,T;L^2(\Omega))} \leq c(\nu_1^\frac{3}{4}+\nu_2), \label{eq:prop2}\\
    &||B+C+D||_{L^\infty((0,T)\times\Omega)} \leq c(\nu_1^\frac{1}{2}+\nu_2), \nonumber \\
    &\normone{E+F+G}\leq c(\nu_1^\frac{3}{4}+\nu_2), \nonumber \\
    &\normone{\partial_\phi(E+F+G)}\leq c(\nu_1^\frac{3}{4}+\nu_2). \nonumber
\end{align}
\end{proposition}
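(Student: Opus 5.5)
The plan is to estimate each forcing term directly from its definition in \eqref{eq:PipeCorrectorForcingDef}. Two ingredients do all the work. The first is the change of variables $r=1-\sqrt{\nu_1}Z$ on the support of $\rho$, where $r\in[\frac14,1]$ and so $r$ is bounded above and below. This gives, for any function $g(t,Z,\phi)$,
\[
\|g(\cdot,\tfrac{1-r}{\sqrt{\nu_1}},\cdot)\|_{L^p(\Omega\cap\{r\ge 1/4\})}\le c\,\nu_1^{\frac{1}{2p}}\|g\|_{L^p(\Omega_\infty)} .
\]
The second is the decay bounds for the correctors from Appendix \ref{s:rates}, applied to \eqref{eq:PipeCorrectorEqs}. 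Since that system has the same structure as the channel corrector system (2), I take as given that $\|\langle Z\rangle^k\partial_\phi^j\partial_Z^i\theta^0\|_{L^2(\Omega_\infty)}$ and $\|\partial_\phi^j\partial_Z^i\theta^0\|_{L^\infty}$ are bounded uniformly on $[0,T]$ for the finitely many $i,j,k$ needed. Smooth coefficients such as $1/r$, $1/r^2$ and $\rho,\rho',\rho''$ are bounded on the support, and the Euler quantities are bounded in $L^\infty$ by the regularity listed before the subsection (with Hardy's inequality for the terms with $1/r$).

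I would then go through the terms in order.

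\emph{The term $A$.} Since $A=-(\rho\theta^0_\phi)^2-2\rho u^0_\phi\theta^0_\phi$ and $r\ge\frac14$ on its support, the ingredients above give
\[
\|A/r^2\|_{L^1}\le c\,\nu_1^{1/2}\bigl(\|\theta^0_\phi\|_{L^2}^2+\|u^0_\phi\|_{L^\infty}\|\theta^0_\phi\|_{L^1}\bigr),
\]
where $\|\langle Z\rangle\theta_\phi^0\|_{L^2}$ controls the $L^1$ norm. Similarly $\|A/r\|_{L^2}\le c\,\nu_1^{1/4}$.

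\emph{The terms $B$, $D$, $E$.} These carry a factor $\nu_1$ or $\nu_2$ times bounded quantities, so they are $O(\nu_1+\nu_2)$ in $L^2$ and $L^\infty$. The $u_x^0$ and $u_\phi^0$ parts use $H^m$ regularity, and the corrector parts use $\|\theta^0\|_{L^\infty}$ and $\|\theta^0\|_{L^2(\Omega_\infty)}$.

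\emph{The terms $C$, $F$.} These are $\sqrt{\nu_1}$ times $\partial_Z\theta^0$. In $L^2$ this gives $\sqrt{\nu_1}\cdot\nu_1^{1/4}=\nu_1^{3/4}$, and in $L^\infty$ it gives $\nu_1^{1/2}$. This is exactly the loss visible in the $L^\infty$ bound for $B+C+D$.

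\emph{The term $G$.} The $\nu_2$ terms are $O(\nu_2)$. The remaining three terms are the delicate ones, because no explicit small factor is present; smallness must come from the vanishing of the brackets at $r=1$.
\begin{itemize}
\item The term $\rho(\frac{\rho}{r}-1)\theta^0_\phi\partial_\phi\theta^0_x$: the bracket vanishes at $r=1$ and is Lipschitz, so $|\frac{\rho}{r}-1|\le c(1-r)=c\sqrt{\nu_1}Z$ near $r=1$. Where $\rho<1$ we have $r\le\frac12$, so $Z\ge 1/(2\sqrt{\nu_1})$ and $1\le 2\sqrt{\nu_1}Z$. Either way the factor is bounded by $c\sqrt{\nu_1}\langle Z\rangle$. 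The $L^2$ norm is therefore $\le c\,\nu_1^{3/4}\|\theta_\phi^0\|_{L^\infty}\|\langle Z\rangle\partial_\phi\theta_x^0\|_{L^2}$.
\item The terms $\partial_\phi\theta_x^0(\frac{u_\phi^0}{r}-u_\phi^0(t,1))$ and $\theta_\phi^0(\frac1r\partial_\phi u_x^0-\partial_\phi u_x^0(t,1,\phi))$: by the mean value theorem, as in steps (2) and (3) of the proof of Theorem \ref{teo1}, each bracket is $\le c(1-r)=c\sqrt{\nu_1}Z$. The bound uses $\partial_r(u_\phi^0/r)\in L^\infty$ and $\partial_r\partial_\phi u_x^0\in L^\infty$. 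This again yields $c\,\nu_1^{3/4}$.
\end{itemize}

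\emph{The $\partial_\phi$ estimate.} For $\partial_\phi(E+F+G)$, the symmetry makes $\rho$, $u_\phi^0$ and $\theta_\phi^0$ independent of $\phi$. So $\partial_\phi$ falls only on $\theta_x^0$ and $u_x^0$, and the same argument applies with one more $\phi$-derivative. This requires the corrector bounds for $\partial_\phi^2\theta_x^0$ and $\partial_\phi^3\theta_x^0$, the latter coming from the $\nu_2\partial_{\phi\phi}$ term, and $H^m$, $m>4$, regularity of $u_x^0$.

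The main obstacle is twofold: making sure the weighted corrector bounds are available for the nonlinear $\theta_x^0$ equation with enough $\phi$-derivatives, and handling the brackets in $G$ uniformly, including the cutoff region. I would verify the weighted energy estimates of the appendix carry over by differentiating \eqref{eq:PipeCorrectorEqs} in $\phi$. This works because $\theta_\phi^0$ solves a linear heat equation independent of $\phi$, so the differentiated equations are linear transport-diffusion equations in $\partial_\phi^j\theta_x^0$ with bounded coefficients.
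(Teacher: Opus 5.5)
Your proposal is correct and follows essentially the same route as the paper's proof: the stretched change of variables $r=1-\sqrt{\nu_1}Z$ on the support of $\rho$ combined with the weighted corrector bounds, direct estimates of $A,\dots,F$, and the splitting of $G$ into the $\nu_2$ part plus three brackets. The paper handles these brackets just as you do, treating $\rho(\frac{\rho}{r}-1)\theta_\phi^0\partial_\phi\theta_x^0$ by splitting $[\tfrac14,\tfrac12]$ from $[\tfrac12,1]$ and the other two by the mean value theorem. You also make explicit two points the paper only asserts: the $L^\infty$ bound for $B+C+D$, where you take an $L^\infty$ bound on $\partial_Z\theta_\phi^0$ as given, and the $\phi$-differentiated corrector bounds, which follow because the $\theta_x^0$ equation is linear with a $\phi$-independent transport coefficient.
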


\begin{proof} 
We recall that the measure  put on $\Omega$ is  $d\Omega=rdrd\phi$.  Using the explicit formulas for $A, B, C, D$ in \eqref{eq:PipeCorrectorForcingDef}, we have that
\begin{align}
    &\left\|\frac{A}{r^2}\right\|_{L^1(\Omega)}\leq \int_\frac{1}{4}^1\frac{(\rho\theta_\phi^0)^2+2|\rho u_\phi^0\theta_\phi^0|}{r^2}rdr \nonumber \\
    & \qquad \qquad  \qquad \leq c(1+||u_\phi^0||_{L^\infty(\Omega)})\int_0^\infty[(\theta_\phi^0(t,Z)^2+|\theta_\phi^0(t,Z)|]dZ\leq c\sqrt{\nu_1} \nonumber \\
    &\left\|\frac{A}{r}\right\|_{L^2(\Omega)}^2\leq \int_\frac{1}{4}^1\frac{(\rho\theta_\phi^0)^4+4|\rho  u_\phi^0\theta_\phi^0|^2}{r^2}rdr \nonumber \\
    & \qquad \qquad  \qquad  \leq c(1+||(u_\phi^0)^2||_{L^\infty(\Omega)})\int_0^\infty((\theta_\phi^0(t,Z)^4+(\theta_\phi^0(t,Z))^2)dZ\leq c\sqrt{\nu_1}, \nonumber 
\end{align}
\begin{align}
    &||B||_{L^2(\Omega)}^2\leq \int_\Omega\nu_1^2(\Delta u_\phi^0)^2+c\int_\frac{1}{4}^1\nu_1^2(\theta_\phi^0)^2dr\leq \nu_1^2||\Delta u_\phi^0||_{L^2(\Omega)}^2+c\nu_1^\frac{5}{2}||\theta_\phi^0||_{L^2(0,\infty)}^2, \nonumber \\
    &||C||_{L^2(\Omega)}^2\leq \int_\frac{1}{4}^1\nu_1\left(\frac{\partial_Z\theta_\phi^0}{r}+2\partial_Z\theta_\phi^0\right)^2rdr\leq\nu_1^\frac{3}{2}||\partial_Z\theta_\phi^0||_{L^2(0,\infty)}^2, \nonumber \\
    &||D||_{L^2(\Omega)}^2\leq \int_\Omega\nu_2^2\frac{(u_\phi^0)^2}{r^4}rdr+\int_\frac{1}{4}^1\nu_2^2\frac{(\theta_\phi^0)^2}{r^4}rdr\leq \nu_2^2||\Delta(u_\phi^0e_\phi)||_{L^2(\Omega)}^2+\nu_2^\frac{5}{2}||\theta_\phi^0||_{L^2(0,\infty)}^2 
    \nonumber \\
    &||E||_{L^2(\Omega)}^2\leq c\nu_1^2\int_\Omega(\Delta u_x^0)^2+c\nu_1^\frac{5}{2}\left(\int_\frac{1}{4}^1(\theta_x^0)^2+\int_\frac{1}{4}^1(\partial_{\phi\phi}\theta_x^0)^2\right)\leq c\nu_1^2, \nonumber \\
    &||F||_{L^2(\Omega)}^2\leq c\nu_1\int_\frac{1}{4}^14\frac{(\partial_Z\theta_x^0)^2}{r^2}+(\partial_Z\theta_x^0)^2\leq c\nu_1^\frac{3}{2}||\partial_Z\theta_x^0||^2_{L^2(\Omega_\infty)}\leq c\nu_1^\frac{3}{2},\nonumber
\end{align}
while for $G$ we have that
\begin{align}
    &||G||_{L^2(\Omega)}\leq c\nu_2(||\Delta u_x^0||_{L^2(\Omega)}+||\Delta\theta_x^0||_{L^2(\Omega_\infty)})+c\nu_1^\frac{3}{4}||\theta_\phi^0||_{L^\infty(0,\infty)}||\langle Z \rangle \partial_\phi \theta_x^0||_{L^2(\Omega_\infty)} \nonumber \\
    & \qquad \qquad + c\nu_1^\frac{3}{4}(||u_\phi^0||_{L^\infty(\Omega)}
    +||\partial_ru_\phi^0||_{L^\infty(\Omega)})||\langle Z \rangle \partial_\phi \theta_x^0||_{L^2(\Omega_\infty)}+c\nu_1^\frac{3}{4}(||\partial_\phi u_x^0||_{L^\infty(\Omega)}+ \nonumber \\
    &\qquad \qquad  \qquad ||\partial_{r\phi}u_x^0||_{L^\infty(\Omega)})||\langle Z \rangle \theta_\phi ^0||_{L^2(0,\infty)}. 
     \nonumber 
\end{align}
We decompose the expression above as follows:
\begin{equation*}
        G=I_1+I_2+I_3+I_4,
\end{equation*}
where 
\begin{align}
  I_1&=\frac{-\nu_2}{r^2}(\partial_{\phi\phi}u_x^0+\partial_{\phi\phi}\rho\theta_x^0), \nonumber \\
  I_2&=\rho\left(\frac{\rho}{r}-1\right)\theta_\phi^0\partial_\phi\theta_x^0, \nonumber \\
  I_3&=\rho\left(\frac{u_\phi^0(t,r)}{r}-u_\phi(t,1)\right)\partial_\phi\theta_x^0, \nonumber \\
  I_4&=\rho\left(\frac{\partial_\phi u_x^0(t,r,\phi)}{r}-\partial_\phi u_x(t,1,\phi)\right)\theta_\phi^0.
  \nonumber 
\end{align}
We can estimate each term on the right-hand side as follows:
\begin{align}
    &||I_1||_{L^2(\Omega)}\leq \nu_2(||\Delta u_x^0||_{L^2(\Omega)}+||\Delta \theta_x^0||_{L^2(\Omega)}),
    \nonumber \\
    &||I_2||_{L^2(\Omega)}^2=\int_\Omega\rho^2\left(\frac{\rho}{r}-1\right)^2(\theta_\phi^0)^2(\partial_\phi\theta_x^0)^2rdrd\phi\leq c\int_0^{2\pi}\Big(\int_\frac{1}{4}^\frac{1}{2}(\theta_\phi^0)^2(\partial_\phi\theta_x^0)^2rdr \nonumber \\
    & \qquad \qquad +\int_\frac{1}{2}^1(r-1)^2(\theta_\phi^0)^2(\partial_\phi\theta_x^0)^2rdr\Big)d\phi \leq c\int_0^{2\pi}\Big(\int_\frac{1}{2\sqrt{\nu_1}}^\frac{3}{4\sqrt{\nu_1}}\frac{\langle Z \rangle ^2}{\langle Z \rangle ^2}\nu_1^\frac{1}{2}(\theta_\phi^0\partial_\phi\theta_x^0)^2rdZ \nonumber \\
    & \qquad \qquad \qquad +\int_{0}^{\frac{1}{2\sqrt{\nu_1}}}\nu_1^\frac{3}{2}(\partial_\phi^0\partial_\phi\theta_x^0)^2Z^2rdZ\Big)d\phi\leq c\nu_1^\frac{3}{2}||\theta_\phi^0||_{L^\infty(0,\infty)}^2||\langle Z \rangle \partial_\phi\theta_x^0||_{L^2(\Omega_\infty)}^2, \nonumber \\
    &||I_3||_{L^2(\Omega)}^2=\int_0^{2\pi}\int_0^1\Big(\rho(\frac{u_\phi^0(t,r)}{r}-u_\phi(t,1))\partial_\phi\theta_x^0\Big)^2rdrd\phi=\int_0^{2\pi}\int_0^1\Big(\rho\frac{r-1}{r}(\partial_ru_\phi(t,\xi) \nonumber \\
    &\qquad \qquad - u_\phi^0(t,1))\partial_\phi\theta_x^0\Big)^2rdrd\phi\leq c\nu_1^\frac{3}{2}(||u_\phi^0||_{L^\infty(\Omega)}+||\partial_ru_\phi^0||_{L^\infty(\Omega)})^2||\langle Z \rangle \partial_\phi\theta_x^0||_{L^2(\Omega)}^2, \nonumber \\
    &||I_4||_{L^2(\Omega)}^2=\int_0^{2\pi}\int_0^1\Big(\rho(\frac{\partial_\phi u_x^0(t,r,\phi)}{r}-\partial_\phi u_x(t,1,\phi))\theta_\phi^0\Big)^2rdrd\phi \nonumber \\
    &\qquad \qquad =\int_0^{2\pi}\int_0^1\Big(\rho\frac{r-1}{r}(\partial_r\partial_\phi u_x(t,\xi,\phi)-\partial_\phi u_\phi^0(t,1,\phi))\theta_\phi^0\Big)^2rdrd\phi \nonumber \\
    & \qquad \qquad \qquad \leq c\nu_1^\frac{3}{2}(||\partial_\phi u_x^0||_{L^\infty(\Omega)}+||\partial_r\partial_\phi u_x^0||_{L^\infty(\Omega)})^2||\langle Z \rangle \theta_\phi^0||_{L^2(0,\infty)}^2. \nonumber 
 \end{align}
Combining the bounds above gives the desired estimates, noting also that differentiating with respect to $\phi$ gives similar estimates, which concludes the proof.
\end{proof}

We now turn to the proof of our main convergence result for pipe flows.

\begin{theorem}
\label{teo6}
Assume the initial data $\mathbf{a}, b$ in $H^m(\Omega)$, $m>4$, the forcing $\mathbf{F}\in C^\infty([0,T]\times\Omega)$, and assume the compatibility conditions (\ref{ccpc2}).  There exists a constant $c$ independent of $\nu_1$ and $\nu_2$ such that 
    \begin{equation} \label{eq:PipeVVLConvergenceRates}
        \normone{\mathbf{u}^\nu-\mathbf{u}^0-\bm \theta^0}\leq c\, (\nu_1^\frac{3}{4}+\nu_2).
    \end{equation}
\end{theorem}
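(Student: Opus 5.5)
The plan is to carry out energy estimates on the error system \eqref{errdue}--\eqref{errtre}, following the channel case (Theorem \ref{teo1}). The forcing bounds come from Proposition \ref{prop2}. Since $\tilde{\mathbf u}^{app}=\mathbf u^0+\rho\,\bm\theta^0$, the quantity to estimate is exactly $\mathbf u^{err}$. The cut-off $\rho$ vanishes near $r=\delta$, so the inner corrector contributes only terms of the same type, which are handled identically. We work with the measure $r\,dr\,d\phi$, which is comparable to Lebesgue measure on the annulus since $r\ge\delta$.

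\textbf{Step 1 (angular component).} Multiply \eqref{errdue} by $u_\phi^{err}$ and integrate over $\Omega$. Integrating by parts and using $u^{err}_\phi=0$ at $r=1,\delta$ gives
\[
\tfrac12\tfrac{d}{dt}\|u_\phi^{err}\|_{L^2}^2+\nu_1\|\partial_r u_\phi^{err}\|_{L^2}^2+\nu_2\|u_\phi^{err}/r\|_{L^2}^2\le \|B+C+D\|_{L^2}\|u_\phi^{err}\|_{L^2}.
\]
Here $\nu_1,\nu_2$ are treated as constants; the dissipative terms are nonnegative and can be dropped. By \eqref{eq:prop2}, the right side is at most $c(\nu_1^{3/4}+\nu_2)\|u_\phi^{err}\|_{L^2}$. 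Dividing by $\|u_\phi^{err}\|$ (or using Young and Grönwall) and integrating in time from zero data gives $\|u_\phi^{err}\|_{L^\infty L^2}\le c(\nu_1^{3/4}+\nu_2)$.

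If the sharper $\nu_1$ rate is wanted, the term $C$ supported on $\operatorname{supp}\rho'$ could be improved as in the channel case using the weight $\langle Z\rangle^2$. This is not needed for \eqref{eq:PipeVVLConvergenceRates}, because $C$ also contains $\sqrt{\nu_1}\rho\partial_Z\theta^0_\phi/r$, whose $L^2$ norm is already of order $\nu_1^{3/4}$.

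\textbf{Step 2 (axial component).} Multiply \eqref{errtre} by $u_x^{err}$ and integrate. The transport term $\int \frac{u_\phi^\nu}{r}\partial_\phi u_x^{err}\,u_x^{err}$ vanishes by $\phi$-periodicity, since $u_\phi^\nu$ is independent of $\phi$. The diffusion terms give nonnegative contributions after integrating by parts in $r$ (with the weight $r$) and in $\phi$.

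The coupling term is bounded by
\[
\Big|\int \tfrac{u_\phi^{err}}{r}\partial_\phi\tilde u_x^{app}u_x^{err}\Big|\le \delta^{-1}\|\partial_\phi\tilde u_x^{app}\|_{L^\infty}\|u_\phi^{err}\|_{L^2}\|u_x^{err}\|_{L^2}.
\]
The factor $\|\partial_\phi\tilde u_x^{app}\|_{L^\infty}\le \|\partial_\phi u_x^0\|_{L^\infty}+\|\partial_\phi\theta_x^0\|_{L^\infty(\Omega_\infty)}$ is bounded uniformly in viscosity. This uses the regularity of $u^0$ for $m>4$ and the corrector bounds of Appendix \ref{s:rates}, including the $L^\infty$ bound for $\partial_\phi\theta_x^0$ obtained from the anisotropic embedding. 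The forcing term is bounded by $\|E+F+G\|_{L^2}\|u_x^{err}\|_{L^2}\le c(\nu_1^{3/4}+\nu_2)\|u_x^{err}\|$, again by Proposition \ref{prop2}.

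Inserting the bound from Step 1, we obtain
\[
\tfrac{d}{dt}\|u_x^{err}\|\le c(\nu_1^{3/4}+\nu_2).
\]
Integrating over $[0,T]$ with zero initial data gives the same rate for $u_x^{err}$. Summing the two components yields \eqref{eq:PipeVVLConvergenceRates}.

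\textbf{Main obstacle.} The delicate point is making sure that no term coming from the curvature loses powers of $\nu_1$. Such terms arise from the factor $1/r$ in the convective term, from the mismatch $u_\phi^0(t,r)/r-u_\phi^0(t,1)$, and from the metric terms $\frac{\sqrt{\nu_1}}{r}\partial_Z\theta$. All of these have already been absorbed into $C$, $F$, and $G$ and estimated in Proposition \ref{prop2} via Taylor expansion at $r=1$ and the weight $\langle Z\rangle$. What remains is to check two things: that the cut-off region $r\in[1/4,1/2]$ produces only rapidly decaying contributions, and that the uniform $L^\infty$ bound on $\partial_\phi\theta_x^0$ holds, which is where the regularity assumption $m>4$ enters.
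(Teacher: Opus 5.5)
Your proposal is correct and follows the paper's proof essentially step by step. You test \eqref{errdue} and \eqref{errtre} against $u_\phi^{err}$ and $u_x^{err}$, take the forcing bounds from Proposition \ref{prop2}, use $\phi$-periodicity to kill the transport term, and control the coupling term through a uniform bound on $\frac1r\partial_\phi\tilde u_x^{app}$ before closing with Gr\"onwall. The one small slip is your source for the $L^\infty$ bound on $\partial_\phi\theta_x^0$: it comes directly from the corrector estimates of Appendix \ref{s:rates} adapted to the annulus, not from the anisotropic embedding, and in any case only time-integrability of that coefficient is needed for Gr\"onwall.
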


The following corollary is a direct consequence of the triangle inequality and the estimate on  $\normone{\bm \theta^0}$ from Appendix \ref{s:rates}. It establishes the vanishing viscosity limit for any value of $\nu_1$ and $\nu_2$. However, we will be able to obtain  convergence in higher norms only in certain regimes, depending on the relative size of $\nu_1$ and $\nu_2$.

\begin{corollary} \label{cor:PipeVVL}
    Under the hypotheses of Theorem \ref{teo6}, the following estimate holds:
    \begin{equation} \label{eq:PipeVVL}
        \normone{\mathbf{u}^\nu-\mathbf{u}^0}\leq c(\nu_1^\frac{1}{4}+\nu_2).
    \end{equation}
\end{corollary}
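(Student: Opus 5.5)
The corollary follows from Theorem \ref{teo6} by the triangle inequality, once we bound the corrector in the energy norm. We write
\begin{equation*}
\mathbf{u}^\nu-\mathbf{u}^0=(\mathbf{u}^\nu-\mathbf{u}^0-\rho\bm\theta^0)+\rho\bm\theta^0 ,
\end{equation*}
where $\rho$ is the cut-off introduced above. Theorem \ref{teo6} is stated for $\mathbf{u}^\nu-\mathbf{u}^0-\bm\theta^0$, and its proof controls $\mathbf{u}^{err}=\mathbf{u}^\nu-\tilde{\mathbf{u}}^{app}$. The factor $1-\rho$ vanishes on $[\tfrac12,1]$, so the discrepancy $(1-\rho)\bm\theta^0$ lives where $Z\geq \frac{1}{2\sqrt{\nu_1}}$. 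By the weighted decay of the corrector from Appendix \ref{s:rates}, in particular the bound on $\|\langle Z\rangle\bm\theta^0\|_{L^2(\Omega_\infty)}$, this piece is $O(\nu_1^{3/4})$ in $L^\infty(0,T;L^2(\Omega))$. Either way, the first summand is bounded by $c(\nu_1^{3/4}+\nu_2)$.

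For the corrector, we change variables $r=1-\sqrt{\nu_1}Z$ using the measure $r\,dr\,d\phi$. Since $r\le 1$ and $\rho\le 1$, this gives
\begin{equation*}
\|\rho\bm\theta^0(t)\|_{L^2(\Omega)}^2\le \sqrt{\nu_1}\int_0^{2\pi}\!\!\int_0^{\infty}|\bm\theta^0(t,Z,\phi)|^2\,dZ\,d\phi .
\end{equation*}
Hence $\normone{\rho\bm\theta^0}\le \nu_1^{1/4}\|\bm\theta^0\|_{L^\infty(0,T;L^2(\Omega_\infty))}$. The appendix estimates give $\|\bm\theta^0\|_{L^\infty(0,T;L^2(\Omega_\infty))}\le c$, with $c$ depending only on the traces of $\mathbf{u}^0$ on the boundary and on $T$, and not on $\nu_1,\nu_2$. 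The inner boundary corrector $\bm\theta_\delta$ at $r=\delta$ is handled identically. The fixed $\delta$ only changes the constants, since $r\geq\delta$ keeps the measure comparable to Lebesgue measure.

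Adding the two contributions gives
\begin{equation*}
\normone{\mathbf{u}^\nu-\mathbf{u}^0}\le c(\nu_1^{3/4}+\nu_2)+c\nu_1^{1/4}\le c(\nu_1^{1/4}+\nu_2),
\end{equation*}
using $\nu_1\le 1$. The only point requiring care is that the $L^2(\Omega_\infty)$ bound on $\bm\theta^0$ is uniform in time and independent of the viscosities, which is exactly what Appendix \ref{s:rates} provides for the degenerate parabolic system \eqref{eq:PipeCorrectorEqs}.
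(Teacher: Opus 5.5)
Your proposal is correct and follows the paper's argument. The paper likewise obtains the corollary from Theorem~\ref{teo6} by the triangle inequality together with the bound $\normone{\rho\bm\theta^0}\le \nu_1^{1/4}\|\bm\theta^0\|_{L^\infty(0,T;L^2(\Omega_\infty))}\le c\,\nu_1^{1/4}$, which comes from the change of variables $r=1-\sqrt{\nu_1}Z$ and the viscosity-independent corrector estimates of Appendix~\ref{s:rates}. Your extra bookkeeping for the cut-off discrepancy $(1-\rho)\bm\theta^0=O(\nu_1^{3/4})$ and for the inner corrector $\bm\theta_\delta$ simply spells out details the paper leaves implicit.
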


%\Annacomment{Controllare la dimostrazone, possiamo anche devidere di fare tutto in un anello con coefficienti costanti}

\begin{proof}[Proof of Theorem \ref{teo6}]
We begin by multiplying (\ref{errdue}) by $u_\phi^{err}$ and integrating  over $\Omega$ to obtain:
\begin{align}
\frac{1}{2}\frac{d}{dt}||u_\phi^{err}||_{L^2(\Omega)}^2-\int_\Omega\frac{\nu_1}{r}\partial_r(r\partial_ru_\phi^{err})u_\phi^{err}rdrd\phi+\int_\Omega\frac{\nu_2}{r^2}u_\phi^{err}u_\phi^{err}=-\int_\Omega(B+C+D)u_\phi^{err}, \nonumber
\end{align}
where the terms $B,\,C,\,D$ are as in \eqref{eq:PipeCorrectorForcingDef}.
Integrating the second term by parts and using H\"older and Young's inequalities gives:
\begin{align}
    &\frac{1}{2}\frac{d}{dt}||u_\phi^{err}||_{L^2(\Omega)}^2+\int_\Omega\nu_1(\partial_ru_\phi^{err})^2rdrd\phi+
    \int_{\Omega} 
    \nu_2\left\|\frac{u_\phi^{err}}{r}\right\|_{L^2(\Omega)}^2 
    \nonumber \\
    & \leq  c\,||B+C+D||_{L^2(\Omega)}^2+c \,||u_\phi^{err}||_{L^2(\Omega)}^2 .
\end{align}
Next, we integrate in time and apply Gr\"onwall's inequality:
\begin{equation}
\label{uphierr}
\begin{aligned}
        &||u_\phi^{err}||_{L^\infty(0,T; L^2(\Omega))}+\sqrt{\nu_1}||\partial_ru_\phi^{err}||_{L^2(0,T;L^2(\Omega))}+\sqrt{\nu_2}\left\|\frac{u_\phi^{err}}{r}\right\|_{L^2(0,T;L^2(\Omega))}\\
        &\qquad \qquad \leq c\, ||B+C+D||_{L^2(0,T;L^2(\Omega))}\leq c\,(\nu_1^\frac{3}{4}+\nu_2).
        \end{aligned}
\end{equation}
% We introduce now different ideas to study the convergence in higher norms.
 Similarly, multiplying (\ref{errtre}) by $u_x^{err}$ and integrating over $\Omega$ gives:
\begin{equation*}
\begin{aligned}
    &\frac{1}{2}\frac{d}{dt}||u_x^{err}||_{L^2(\Omega)}^2+\int_\Omega\frac{u_\phi}{r}\partial_\phi u_x^{err}u_x^{err}+\int_\Omega\frac{u_\phi^{err}}{r}\partial_\phi \tilde{u}_x^{app}u_x^{err}-\nu_1\int_\Omega\frac{1}{r}\partial_r(r\partial_ru_x^{err})u_x^{err}rdrd\phi\\
    & \qquad \qquad \nu_2\int_{\Omega}\frac{1}{r^2}\partial_{\phi\phi}u_x^{err}u_x^{err}rdrd\phi=-\int_\Omega(E+F+G)u_x^{err}.
    \end{aligned}
\end{equation*}
We integrate the fourth and the fifth term on the left-hand side by parts and observe that the second integral is equal to 0:
\begin{equation*}
\begin{aligned}
    &\frac{1}{2}\frac{d}{dt}||u_x^{err}||_{L^2(\Omega)}^2+\int_\Omega\frac{u_\phi^{err}}{r}\partial_\phi \tilde{u}_x^{app}u_x^{err}+\nu_1\int_\Omega (\partial_ru_x^{err})^2rdrd_\phi\\
    & \qquad \qquad +\nu_2\int_{\Omega}\frac{1}{r^2}(\partial_{\phi}u_x^{err})^2rdrd\phi=-\int_\Omega(E+F+G)u_x^{err}.
    \end{aligned}
\end{equation*}
It follows that:
\begin{equation*}
\begin{aligned}
    &\frac{1}{2}\frac{d}{dt}||u_x^{err}||_{L^2(\Omega)}^2+\nu_1|| \partial_ru_x^{err}||^2_{L^2(\Omega)}+\nu_2||\frac{1}{r}(\partial_{\phi}u_x^{err})||_{L^2(\Omega)}^2\\
    &\leq\int_\Omega(E+F+G)u_x^{err}-\int_{\Omega}\frac{u_\phi^{err}}{r}\partial_\phi\tilde{u}_x^{app}u_x^{err}rdrd\phi\\
    &\leq \, ||E+F+G||_{L^2(\Omega)}||u_x^{err}||_{L^2(\Omega)}+||u_\phi^{err}||_{L^2(\Omega)}(||\partial_\phi u_x^0||_{L^\infty(\Omega)}+||\partial_\phi\theta_x^0||_{L^\infty(\Omega_\infty)})||u_x^{err}||_{L^2(\Omega)}.
    \end{aligned}
\end{equation*}
Again, integrating in  time and applying Young and Gr\"onwall's inequalities yields:
\begin{equation}  \label{uxerr}
\begin{aligned}
       &||u_x^{err}||_{L^\infty(0,T;L^2(\Omega))}+\sqrt{\nu_1}|| \partial_ru_x^{err}||_{L^2(0,T;L^2(\Omega))}+\sqrt{\nu_2}\left\|\frac{1}{r}(\partial_{\phi}u_x^{err})\right\|_{L^2(0,T;L^2(\Omega))}\\
        &\qquad \qquad \leq ||E+F+G||_{L^2(0,T;L^2(\Omega))}+||u_\phi^{err}||_{L^2(0,T;L^2(\Omega))}\leq c\,(\nu_1^\frac{3}{4}+\nu_2),
\end{aligned}
\end{equation}
where we used Proposition \ref{prop2}.
Finally, we obtain \eqref{eq:PipeVVLConvergenceRates} by combining \eqref{uphierr} and \eqref{uxerr}
\end{proof}

To obtain higher-order estimates $\mathbf{u}^{err}$, we switch to Cartesian coordinates and prove the following convergence result.

\begin{theorem}
\label{teo7} 
    If we consider the initial data $\mathbf{a}, b$ in $H^m(\Omega)$, $m>4$, $\mathbf{F}\in C^\infty([0,T]\times\Omega)$  and the compatibility conditions (\ref{CC1}), we have, for some constant c independent of $\nu_1$ and $\nu_2$:
    \begin{equation*}
    \begin{aligned}
        &\normone{\mathbf{u}^{err}} \leq c(\nu_1^\frac{3}{4}+\nu_2),\\
        &\normthree{\mathbf{u}^{err}} \leq c(\nu_1^\frac{1}{2}+\nu_2).
        \end{aligned}
    \end{equation*}
    If $\nu_1\leq \nu_2$ and $\frac{\nu_2}{\sqrt{\nu_1}}\to 0$:
    \begin{equation*}
            \normtwo{\mathbf{u}^{err}}\leq c\left(\nu_1^\frac{1}{4}+\frac{\nu_2}{\sqrt{\nu_1}}\right).
    \end{equation*}
    If $\nu_2<\nu_1$ and $\frac{\nu_1^\frac{3}{4}}{\sqrt{\nu}}\to 0$:
    \begin{equation*}
    \normtwo{\mathbf{u}^{err}}\leq c\left(\frac{\nu_1^\frac{3}{4}}{\sqrt{\nu_2}}+\sqrt{\nu_2}\right).
\end{equation*}
\end{theorem}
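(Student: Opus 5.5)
The plan is to work with the error system \eqref{errdue}--\eqref{errtre}, rewritten in Cartesian form as in \eqref{NSEPIPEC}:
\begin{equation*}
\partial_t \mathbf{u}_v^{err}-L_\nu\mathbf{u}_v^{err}=\mathbf{R}_1,\qquad
\partial_t u_x^{err}+(\mathbf{u}_v^\nu\cdot\nabla_v)u_x^{err}+(\mathbf{u}_v^{err}\cdot\nabla_v)\tilde u_x^{app}-L_\nu u_x^{err}=R_2 .
\end{equation*}
Here $\mathbf{R}_1$ and $R_2$ are the Cartesian images of $-(B+C+D)$ and $-(E+F+G)$. The zero Dirichlet data on $\partial\Omega$ allow integration by parts, and $L_\nu=-\nabla_v\cdot J\nabla_v$ is symmetric with $J\ge 2\min(\nu_1,\nu_2)I$. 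The first estimate $\normone{\mathbf{u}^{err}}\le c(\nu_1^{3/4}+\nu_2)$ is exactly Theorem \ref{teo6}, since the Cartesian and polar $L^2$ norms coincide. For the $L^\infty$ bound I would avoid Sobolev embeddings, which cost powers of the viscosity, and use the maximum principle instead. The $\phi$-component $u_\phi^{err}$ solves the scalar parabolic equation \eqref{errdue}, whose zeroth-order term $\nu_2 r^{-2}u_\phi^{err}$ has the good sign. With zero initial and boundary data, comparison gives
\[
\|u_\phi^{err}\|_{L^\infty}\le T\|B+C+D\|_{L^\infty}\le c(\nu_1^{1/2}+\nu_2)
\]
by Proposition \ref{prop2}. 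For $u_x^{err}$, equation \eqref{errtre} is a transport--diffusion equation with a divergence-free drift tangent to the boundary. The maximum principle then gives $\|u_x^{err}\|_{L^\infty}\le \int_0^T(\|E+F+G\|_{L^\infty}+\|u_\phi^{err}\|_{L^\infty}\|\partial_\phi\tilde u_x^{app}\|_{L^\infty})$. I need an $L^\infty$ analogue of the $E+F+G$ bound in Proposition \ref{prop2}, namely $c(\nu_1^{1/2}+\nu_2)$. It follows from the same term-by-term computation with sup norms in place of $L^2$ norms. The loss from $\nu_1^{3/4}$ to $\nu_1^{1/2}$ comes from $F$, which is $\sqrt{\nu_1}\,\partial_Z\theta_x^0$ with no $L^2$-in-$Z$ gain.

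For the $H^1$ bounds I would multiply by $L_\nu\mathbf{u}^{err}$. Equivalently, in polar form I multiply \eqref{errdue} and \eqref{errtre} by $-\frac1r\partial_r(r\partial_r\cdot)$ and by $-r^{-2}\partial_{\phi\phi}$, which controls $\partial_r$ and $r^{-1}\partial_\phi$ derivatives. The $\phi$-derivative part is harmless. The system commutes with $\partial_\phi$ and $u_\phi^{err}$ is radial, so differentiating \eqref{errtre} in $\phi$ and repeating the $L^2$ estimate with $\normone{\partial_\phi(E+F+G)}$ from Proposition \ref{prop2} gives $\|\partial_\phi u_x^{err}\|_{L^\infty L^2}\le c(\nu_1^{3/4}+\nu_2)$. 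The real work is $\partial_r\mathbf{u}^{err}$. Testing \eqref{errdue} with $-\frac1r\partial_r(r\partial_r u_\phi^{err})$ yields
\[
\tfrac12\tfrac{d}{dt}\|\partial_r u_\phi^{err}\|^2+\nu_1\|\Delta_r u_\phi^{err}\|^2+\nu_2\|\partial_r(u_\phi^{err}/r)\|^2\lesssim \|B+C+D\|\,\|\Delta_r u_\phi^{err}\| ,
\]
with a similar identity for $u_x^{err}$. In \eqref{errtre} the transport term contributes $\int \frac{u_\phi}{r}\partial_\phi u_x^{err}\,\Delta_r u_x^{err}$. After integration by parts this leaves a commutator $\int \partial_r(u_\phi/r)\,\partial_\phi u_x^{err}\,\partial_r u_x^{err}$. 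It is controlled by $\|\partial_r u_\phi^\nu\|_{L^\infty}$, which is of size $\nu_1^{-1/2}$ because of the layer. This is the main obstacle, and it is where $\nu_2$ enters.

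For the commutator I would use the already small $\|\partial_\phi u_x^{err}\|_{L^2}\le c(\nu_1^{3/4}+\nu_2)$ together with Gr\"onwall, accepting the factor $\nu_1^{-1/2}$. The forcing terms $\|F\|,\|C\|\sim\nu_1^{3/4}$ must be absorbed against the dissipation. In the case $\nu_1\le\nu_2$, I would put the forcing against the $\nu_1\|\Delta_r\cdot\|^2$ term and absorb the $\nu_2$-terms from $D$ and $G$. The worst contribution is $\nu_2\cdot\nu_1^{-1/2}$, from the product of $\nu_2$-sized forcing (or commutator) with a $\nu_1^{-1/2}$ factor. The forcing $\nu_1^{3/4}$ against dissipation $\nu_1$ gives $\nu_1^{1/4}$. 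Together these give $c(\nu_1^{1/4}+\nu_2/\sqrt{\nu_1})$. In the case $\nu_2<\nu_1$, I would instead use Young's inequality against the $\nu_2$-dissipation, as in the $\alpha$-splitting of Theorem \ref{teo1}. Then $\|B+\dots\|^2/\nu_2\sim\nu_1^{3/2}/\nu_2$ and the $D$ and $G$ contributions give $\nu_2$, yielding $c(\nu_1^{3/4}/\sqrt{\nu_2}+\sqrt{\nu_2})$. Converting back to Cartesian $H^1$ uses $|\nabla_v\mathbf{u}|\lesssim|\partial_r\mathbf{u}|+r^{-1}|\partial_\phi\mathbf{u}|+r^{-1}|\mathbf{u}|$ on the annulus, where $r\ge\delta$. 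The constants depend on $\delta$, and this dependence is fixed.
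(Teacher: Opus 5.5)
Your plan works once one step is replaced, and it is organized differently from the paper's proof.

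\textbf{Comparison with the paper.} The paper stays in Cartesian form and tests each error equation with $-(\nabla_v\cdot J\nabla_v)$ applied to the unknown. This gives a single weighted bound $\sup_t\int_\Omega J\nabla_v u\cdot\nabla_v u\le c(\nu_1^{3/4}+\nu_2)^2$. The two regimes in the statement then come only from dividing by the smaller eigenvalue of $J$, i.e.\ by $\sqrt{\min(\nu_1,\nu_2)}$.

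You instead separate radial and angular derivatives. The $\partial_\phi$-differentiated equation, using the $\partial_\phi(E+F+G)$ bound of Proposition~\ref{prop2}, makes the angular derivatives $O(\nu_1^{3/4}+\nu_2)$. Testing with $-\frac1r\partial_r(r\partial_r\,\cdot)$ against the $\nu_1$-dissipation gives $\|\partial_r\mathbf{u}^{err}\|_{L^2}\lesssim(\nu_1^{3/4}+\nu_2)/\sqrt{\nu_1}$.

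This buys more than you claim:
\begin{itemize}
\item It yields $\|\nabla\mathbf{u}^{err}\|_{L^\infty(0,T;L^2(\Omega))}\lesssim \nu_1^{1/4}+\nu_2/\sqrt{\nu_1}$ in \emph{both} regimes. For $\nu_2<\nu_1$ this is $O(\nu_1^{1/4})$ and implies the stated bound with no condition on $\nu_1^{3/4}/\sqrt{\nu_2}$.
\item Your ``Young against the $\nu_2$-dissipation'' step is therefore superfluous. Testing with $-\frac1r\partial_r(r\partial_r\,\cdot)$ produces no $\nu_2\|\frac1r\partial_r(r\partial_r u)\|^2$ term, so that step only weakens $\nu_1$ to $\nu_2$. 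It is harmless.
\item The commuted $\partial_\phi$-estimate is also what handles the transport term in the $u_x^{err}$ equation uniformly. The paper absorbs that term into $\int_\Omega J\nabla_v u_x^{err}\cdot\nabla_v u_x^{err}$, which controls $r^{-1}\partial_\phi u_x^{err}$ only with weight $\nu_2$. Your estimate is what makes that step uniform in $\nu_2$.
\end{itemize}

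Your maximum-principle proof of the $L^\infty$ bound is a genuine addition, since the paper's proof never carries that estimate out. Proposition~\ref{prop2} supplies $B+C+D$ in $L^\infty$. Your extension to $E+F+G$ needs $\partial_Z\theta_x^0\in L^\infty$, which is the same implicit assumption the paper makes for $C$.

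\textbf{The step to replace.} The gap is the commutator. Nothing you have proved gives $\|\partial_r u_\phi^\nu\|_{L^\infty}\lesssim\nu_1^{-1/2}$. That would require pointwise control of $\partial_r u_\phi^{err}$, not just of the approximate solution.

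You do not need it. Do not integrate the transport term by parts; bound it directly:
\[
\Big|\int_\Omega\frac{u_\phi^\nu}{r}\,\partial_\phi u_x^{err}\,\frac1r\partial_r(r\partial_r u_x^{err})\Big|\le\frac{\nu_1}{4}\Big\|\frac1r\partial_r(r\partial_r u_x^{err})\Big\|_{L^2}^2+\frac{C}{\nu_1}\|u_\phi^\nu\|_{L^\infty}^2\|\partial_\phi u_x^{err}\|_{L^2}^2 .
\]
Here $C$ depends on $\delta$, and $\|u_\phi^\nu\|_{L^\infty}$ is uniformly bounded by your own $L^\infty$ estimate. This contributes $C(\nu_1^{3/4}+\nu_2)^2/\nu_1$, which is the same rate as before.
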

 
 Assuming the same restrictions on the viscosity coefficients, one can obtain the convergence of the pressure. 

\begin{proof}
First we write the equations (\ref{errdue}) and (\ref{errtre}) in these coordinates:
\begin{align}
    \label{errcc1}
    &\partial_t\mathbf{u}_v^{err}-(\nabla_v\cdot J\nabla_v)\mathbf{u}_v^{err}=\mathbf{g}_2, \\
    \label{errcc2}
    &\partial_tu_x^{err}+(\mathbf{u}_v^\nu \cdot \nabla_v)u_x^{err}-(\nabla_v \cdot J\nabla_v)u_x^{err}=g_3,
\end{align}
with $\mathbf{u}_v^{err}=(-u_\phi^{err}\sin\phi, u_\phi^{err}\cos\phi)$ and 
where $\mathbf{g}_2=-(B+C+D)\left(-\frac{x_1}{\sqrt{x_1^2+x_2^2}},-\frac{x_2}{\sqrt{x_1^2+x_2^2}}\right)$ and $g_3=-(E+F+G)-(\mathbf{u}_v^{err}\cdot \nabla_v)\tilde{u}_x^{app}$ with homogeneous initial and boundary conditions.

 We notice also that:
\begin{equation*}
    (\mathbf{u}_v^{err}\cdot \nabla_v)\tilde{u}_x^{app}=\frac{u_\phi^{err}}{r}\partial_\phi\tilde{u}_x^{app}.
\end{equation*}
Hence,  the decay properties of the corrector and  the regularity of Euler solution imply that
\begin{equation*}
    \normthree{\frac{1}{r}\partial_\phi\tilde{u}_x^{app}}\leq c.
\end{equation*}
Next, we multiply (\ref{errcc1}) by $\mathbf{u}_v^{err}$ (component by component) and integrate over $\Omega$:
\begin{equation*}
    \frac{1}{2}\frac{d}{dt}||u_v^{err}||_{L^2(\Omega)}^2+\int_\Omega(J\nabla_v u_v^{err}\cdot \nabla_v u_v^{err})=\int_\Omega gu_v^{err}\leq \, ||B+C+D||_{L^2(\Omega)}||u_v^{err}||_{L^2(\Omega)},
\end{equation*}
where we integrated by parts in the second term.
Integrating over time and using Young and Grönwall's inequality gives:
\begin{equation*}
    ||u_v^{err}||_{L^\infty(0,T;L^2(\Omega))}+\sup_{t\in[0,T]}\left(\int_\Omega(J\nabla_v u_v^{err}\cdot \nabla_v u_v^{err})\right)^\frac{1}{2}\leq c\, (\nu_1^\frac{3}{4}+\nu_2),
\end{equation*}
and thus $\normone{\mathbf{u}_v^{err}}\leq c(\nu_1^\frac{3}{4}+\nu_2)$.

 We multiply (\ref{errcc1}) by $-(\nabla_v,J\nabla_v)\mathbf{u}_v^{err}$ (component by component) to obtain:
\begin{equation*}
\begin{aligned}
    &\frac{1}{2}\frac{d}{dt}\int_\Omega(\nabla_vu_v^{err}\cdot J\nabla_vu_v^{err})+||(\nabla_v\cdot J\nabla_v)u_v^{err}||_{L^2(\Omega)}^2=-\int_\Omega g(\nabla_v\cdot J\nabla_v)u_v^{err}\\
    &\leq||B+C+D||_{L^2(\Omega)}||(\nabla_v\cdot J\nabla_v)u_v^{err}||_{L^2(\Omega)}\leq c(\nu_1^\frac{3}{4}+\nu_2)||(\nabla_v\cdot J\nabla_v)u_v^{err}||_{L^2(\Omega)}\\
    &\leq \frac{c^2(\nu_1^\frac{3}{4}+\nu_2)^2}{4}+||(\nabla_v\cdot J\nabla_v)u_v^{err}||_{L^2(\Omega)}^2,
    \end{aligned}
\end{equation*}
where we used Young's inequality. We then integrate in time and apply  Gr\"onwall's inequality:
\begin{equation}
\label{autoval}
    \sup_{t\in[0,T]}\left(\int_\Omega(\nabla_vu_v^{err}\cdot J\nabla_vu_v^{err})\right)^\frac{1}{2}\leq c\,(\nu_1^\frac{3}{4}+\nu_2).
\end{equation}
We recall that $J$ has two eigenvalues: $2\nu_1$ and $2\nu_2$ with respective eigenvectors $\frac{\mathbf{e}_r}{\sqrt{2}}$ and $\frac{-\mathbf{e}_\phi}{\sqrt{2}}$ and that $\min(2\nu_1,2\nu_2)||u||\leq ||(u\cdot Ju)||\leq \max(2\nu_1,2\nu_2)||u||$. 

 If $\nu_1\leq \nu_2$, from (\ref{autoval}) we have:
\begin{equation*}
    \sqrt{\nu_1}\normone{\nabla_v\mathbf{u}_v^{err}}\leq c\, (\nu_1^\frac{3}{4}+\nu_2),
\end{equation*}
and dividing by $\sqrt{\nu_1}$:
\begin{equation*}
    \normone{\nabla_v\mathbf{u}_v^{err}}\leq c\, \left(\nu_1^\frac{1}{4}+\frac{\nu_2}{\sqrt{\nu_1}}\right). 
\end{equation*}
Consequently, we have also to impose that: $\frac{\nu_2}{\sqrt{\nu_1}}\to 0$.

 If $\nu_2< \nu_1$, from (\ref{autoval}) we have:
\begin{equation*}
    \sqrt{\nu_2}\normone{\nabla_v\mathbf{u}_v^{err}}\leq c\, (\nu_1^\frac{3}{4}+\nu_2),
\end{equation*}
and dividing by $\sqrt{\nu_2}$ gives:
\begin{equation*}
    \normone{\nabla_v\mathbf{u}_v^{err}}\leq c\, \left(\frac{\nu_1^\frac{3}{4}}{\sqrt{\nu_2}}+\sqrt{\nu_2}\right).
\end{equation*}
Consequently,  we have also to impose that: $\frac{\nu_1^\frac{3}{4}}{\sqrt{\nu_2}}\to 0$.

 We proceed by studying the equation (\ref{errcc2}). Multiplying (\ref{errcc2}) by $u_x^{err}$ and integrating over $\Omega$:
\begin{equation*}
\begin{aligned}
    &\frac{1}{2}\frac{d}{dt}||u_x^{err}||_{L^2(\Omega)}^2+\int_\Omega(\mathbf{u}_v^\nu\cdot\nabla_v)u_x^{err}u_x^{err}+\int_\Omega(\nabla_vu_x^{err}\cdot J\nabla_vu_x^{err})\\
    &\qquad \qquad \leq \, ||E+F+G||_{L^2(\Omega)}||u_x^{err}||_{L^2(\Omega)}+\left|\int_\Omega (\mathbf{u}_v^{err}\cdot\nabla_v)\tilde{u}_x^{app}u_x^{err}\right|,
    \end{aligned}
\end{equation*}
where the second term on the left-hand side is equal to $0$, because $(\mathbf{u}_v^\nu\cdot\nabla_v)u_x^{err}=\frac{u_\phi^\nu}{r}\partial_\phi u_x^{err}$ and $u_\phi^\nu$ is independent from $\phi$.

 Next, we observe that:
\begin{equation*}
    \left|\int_\Omega (\mathbf{u}_v^{err}\cdot\nabla_v)\tilde{u}_x^{app}u_x^{err}\right|\leq \left\|\frac{1}{r}\partial_\phi \tilde{u}_x^{app}\right\|_{L^\infty(\Omega)}||u_\phi^{err}||_{L^2(\Omega)}||u_x^{err}||_{L^2(\Omega)}\leq c\, (\nu_1^\frac{3}{4}+\nu_2)||u_x^{err}||_{L^2(\Omega)}.
\end{equation*}
Then integrating over time and applying Young and Gr\"onwall's inequalities gives:
\begin{equation*}
    \normone{u_x^{err}}+\sup_{t\in[0,T]} \left(\int_\Omega(\nabla_vu_x^{err}\cdot J\nabla_v u_x^{err})\right)^{1/2}
    \leq c\, (\nu_1^\frac{3}{4}+\nu_2).
\end{equation*}
By multiplying (\ref{errcc2}) by $-(\nabla_v\cdot J\nabla_v)u_x^{err}$ and integrating over $\Omega$, it follows that 
\begin{equation}
\begin{aligned}
\label{stimaderivate}
    &\frac{1}{2}\frac{d}{dt}\int_\Omega(\nabla_vu_x^{err}\cdot J\nabla_vu_x^{err})+||(\nabla_v\cdot J\nabla_v)u_x^{err}||_{L^2(\Omega)}^2-\int_\Omega(\mathbf{u}_v^\nu\cdot\nabla_v)u_x^{err}(\nabla_v\cdot J\nabla_v)u_x^{err}\\
    & \qquad \qquad \leq ||E+F+G||_{L^2(\Omega)}||(\nabla_v\cdot J\nabla_v)u_x^{err}||_{L^2(\Omega)}\\
    &\qquad \qquad \qquad +||u_\phi^{err}||_{L^2(\Omega)}\left\|\frac{1}{r}\partial_\phi\tilde{u}_x^{app}\right\|_{L^\infty(\Omega)}||(\nabla_v\cdot J\nabla_v)u_x^{err}||_{L^2(\Omega)}\\
    &\qquad \leq c\, (\nu_1^\frac{3}{4}+\nu_2)||(\nabla_v\cdot J\nabla_v)u_x^{err}||_{L^2(\Omega)}\leq \frac{c^2\, (\nu_1^\frac{3}{4}+\nu_2)^2}{2}+\frac{1}{2}||(\nabla_v\cdot J\nabla_v)u_x^{err}||_{L^2(\Omega)}^2,
    \end{aligned}
\end{equation}
where:
\begin{equation*}
\begin{aligned}
&\Big|\int_\Omega(\mathbf{u}_v^\nu\cdot\nabla_v)u_x^{err}(\nabla_v\cdot J\nabla_v)u_x^{err}\Big|\leq ||u_v^\nu||_{L^\infty(\Omega)}\left\|\frac{1}{r}\partial_{\phi}u_x^{err}\right\|_{L^2(\Omega)}||(\nabla_v\cdot J\nabla_v)u_x^{err}||_{L^2(\Omega)}\\
&\qquad \qquad \qquad \leq \frac{c}{2}\int_\Omega(\nabla_vu_x^{err}\cdot J\nabla_vu_x^{err})+\frac{1}{2}||(\nabla_v\cdot J\nabla_v)u_x^{err}||_{L^2(\Omega)}^2.
    \end{aligned}
\end{equation*}
We employ the above estimate in (\ref{stimaderivate}):
\begin{equation*}
\begin{aligned}
    &\frac{1}{2}\frac{d}{dt}\int_\Omega(\nabla_vu_x^{err}\cdot J\nabla_vu_x^{err})+||(\nabla_v\cdot J\nabla_v)u_x^{err}||_{L^2(\Omega)}^2\leq c(\nu_1^\frac{3}{4}+\nu_2)^2+||(\nabla_v\cdot J\nabla_v)u_x^{err}||_{L^2(\Omega)}^2\\
    &\qquad \qquad +c \int_\Omega(\nabla_vu_x^{err}\cdot J\nabla_vu_v^{err}),
    \end{aligned}
\end{equation*}
applying again Young's inequality and exploiting the regularity of $\mathbf{u}_v^\nu$.
Finally, we integrate in time  and use Gr\"onwall's inequality once again:
\begin{equation*}
    \sup_{t\in[0,T]} \left(\int_\Omega(\nabla_vu_x^{err}\cdot J\nabla_vu_x^{err})\right)^{1/2} \leq c\, (\nu_1^\frac{3}{4}+\nu_2).
\end{equation*}
We can then  conclude imposing the same constraint of the viscosity coefficients as  done for $\mathbf{u}_\nu^{err}$.
\end{proof}

\section{Semigroup analysis for planar  circularly symmetric case} \label{s:semigroup}
%vedi se citare meglio i 2/3 risultati che usi
We continue our analysis of symmetric solutions of the anisotropic Navier-Stokes equations when viscosity vanishes. In particular we show how to establish the vanishing viscosity limit using semigroup methods. (we refer the reader to e.g. \cite{pazy2012semigroups,kato2013perturbation,LunardiBook1995} for an introduction to semigroups.) 
These methods are well suited to semilinear equations, such as the Navier-Stokes equations, and can be successfully applied to singular limits, as the zero-viscosity limit, especially when the systems at hand are weakly nonlinear. This is the case of symmetric flows we are considering (see \cite{mazzucato2008vanishing,GIE20191237,mazzucato2010vanishing} for the isotropic setting). For brevity, we only discuss the case of planar circularly symmetric flows. The advantage of using semigroup techniques is that no correctors are introduced and no hypotheses are made {\em a priori} on the behavior of the flow in the boundary layer. As a matter of fact, we can treat initial data only in $L^2(\Omega)$. We can also estimate the growth of the derivatives due to the discrepancy between the boundary conditions of the Navier-Stokes and Euler equations. For simplicity, in this analysis we take the forcing term $\mathbf{f}=0$.

Circularly symmetric flows are special cases of pipe parallel flows when the flow is planar, that is, the velocity has the form
\begin{equation}
\label{cs}
    \mathbf{u}(t,r,\phi)=\mathbf{u}_\phi(t,r)\mathbf{e}_\phi
\end{equation}
in the domain $\Omega=\{ (r,\phi)\, | \,  \delta\leq r \leq 1, \phi\in[0,2\pi] \}$. Any time-independent circularly symmetric flow is a steady solution of Euler. Hence, the vanishing viscosity limit reduces to convergence to the initial data. 

\begin{figure}[h!]
\begin{center}
\begin{tikzpicture}[scale=2.5, >=Stealth]

 \useasboundingbox (-1.2,-1.2) rectangle (1.2,1.2);
% Assi cartesiani
\draw[->, gray] (-1.1,0) -- (1.2,0) node[right] {\footnotesize $ $};
\draw[->, gray] (0,-1.1) -- (0,1.2) node[above] {\footnotesize $ $};

% Disco esterno (bordo)
\draw[line width= 2] (0,0) circle(1);

% Disco interno tratteggiato per boundary layer
%\draw[dashed] (0,0) circle(0.92);

\draw[line width= 2] (0,0) circle (0.1);

% Frecce tangenti corrette (campo u(r) e_phi)
\foreach \r in {0.35, 0.7} {
    \foreach \angle in {0,20,...,340} {
        \draw[-{latex}]
          ({\r*cos(\angle)}, {\r*sin(\angle)})
          -- +({-sin(\angle)*0.1}, {cos(\angle)*0.1});
    }
}

% Etichetta per il boundary layer
%\draw[-{latex}] (0.73,0.73) -- (0.92*0.707, 0.92*0.707);
%\node[right] at (0.7,0.7) {\footnotesize Boundary layer region (near $r = 1$)};

% Coordinata r=1
\node at (1.05, -0.07) {\footnotesize $1$};
\node at (0.15, -0.07) {\footnotesize $\delta$};
\node at (-0.8, 0.8) {\footnotesize $\Omega$};
\node at (-1.2, -0.75) {\footnotesize $\mathbf{u}=u_\phi(r,t)\mathbf{e}_\phi$};

\end{tikzpicture}
\end{center}
\caption{Circularly symmetric flow inside a unit disk.}
\label{fig:circularflow}
\end{figure}

%We consider the anisotropic case, so instead of the term $\nu\Delta$ in (\ref{NSeq}), we use $\nu_1\partial_{rr}+\frac{\nu_1}{r}\partial_r+\frac{\nu_2}{r^2}\partial_{\phi\phi}$, where $\nu_1$ and $\nu_2$ are the viscosity along $\mathbf{e}_r$ and $\mathbf{e}_\phi$. 

Under planar circular symmetry, the Navier-Stokes equations 
  (\ref{NSeq}) become in polar coordinates $(r,\phi)$:
\begin{equation}
    \begin{cases}
    \label{NSECS}
        \frac{-(u_\phi^\nu)^2}{r}+\partial_r p^\nu=0, \quad \quad \quad \quad \quad \quad \quad \quad &\text{in }\Omega\times[0,T],\\
        \partial_tu_\phi^\nu-\frac{\nu_1}{r}\partial_r(r\partial_ru_\phi^\nu)+\nu_2\frac{u_\phi^\nu}{r^2}=f, & \text{in }\Omega\times[0,T],\\
        u_\phi^\nu(0,r)=a(r), & \text{in }\Omega,\\
        u_\phi^\nu(t,r)=0, &\text{for  } r=1,\delta, t\in [0,T],
    \end{cases}
\end{equation}
where we assume again that the pressure is radial and the initial condition is $\bu_0=a(r) \bm{e}_\phi$.
%We observe that the equations for pressure and velocity in (\ref{NSECS}) are decuple and that the system depends from both viscosity coefficients.\\
 In the same way as for the pipe parallel case, we have the following system in Cartesian coordinates $(x,y)$, 
 $r=(x^2+y^2)^{1/2}$, $\tan \phi = y/x$:
\begin{equation}
\label{SemiNSE}
    \begin{cases}
        \partial_t \mathbf{u}_v^\nu=A_\nu \mathbf{u}_v^\nu, \quad \quad \quad & \text{in } \Omega\times[0,T],\\
        \mathbf{u}_v^\nu=(-a(r)\sin\phi, a(r)\cos\phi)=:\mathbf{u}_0, &\text{on } \Omega\times\{t=0\},\\
        \mathbf{u}_v^\nu=0, & \text{in } \partial\Omega\times[0,T],
        \end{cases}
\end{equation}
with $\mathbf{u}_v^\nu=(-u^\nu_\phi(r)\sin\phi, u^\nu_\phi(r)\cos\phi)$, $A_\nu=\nabla \cdot J\nabla$ and 
\begin{equation*}
    J=\left [
\begin{pmatrix}
\nu_1\cos^2\phi+\nu_2\sin^2\phi & (\nu_1-\nu_2)\sin\phi \cos\phi\\
(\nu_1-\nu_2)\sin\phi \cos\phi & \nu_1\sin^2\phi+\nu_2\cos^2\phi
\end{pmatrix}
\right ].
\end{equation*}

%To use the semigroup theory we need that the coefficients of $A_\nu$ are well-defined so for simplicity we consider as domain the annulus: $\Omega=\{x\in \R^2 s.t. \epsilon\leq|x|\leq1\}$ with $\epsilon>0$.
We will study the semigroup generated  by $A_\nu$ primarily using Cartesian coordinates. 
 %We assume that the initial data $\mathbf{u}_0$ is in $L^2(\Omega)$.
 We observe that $A_\nu=\nabla \cdot J\nabla$ is an unbounded operator on $X=L^2(\Omega)$  with dense domain $D(A_\nu)=H^2(\Omega)\cap H^1_0(\Omega)$, which is self-adjoint. In fact, $A_\nu$ is symmetric on $D(A_\nu)$:
\begin{equation*}
    \int_\Omega \left[(\nabla\cdot J\nabla) u\right] v\, dxdy =-\int_\Omega J\nabla u\cdot \nabla v\, dxdy=\int_\Omega u \left[(\nabla \cdot J\nabla ) v\right]\, dxdy,
\end{equation*}
and its range is $X$ by elliptic regularity. In particular, $A_\nu$ is closed.
 Moreover,  $A_\nu$ is a negative operator, since
\begin{equation*}
    (A_\nu u,u)=\int_\Omega \left[(\nabla \cdot J\nabla)u\right] u \, dx dy= - \int_\Omega J\nabla u\cdot \nabla u  \, dxdy\leq -\min(2\nu_1, 2\nu_2)\, ||\nabla u||_{L^2(\Omega)},
\end{equation*}
for  $u\in D(A_\nu)$, recalling that the eigenvalues of $J$ are $2\nu_1$, $2\nu_2$, hence positive. 
As in the case of pipe flows, $A_\nu$ is a (scalar) uniformly strongly elliptic operator. The solvability of the Dirichlet problem for $A_\nu$ by the Lax-Milgram Theorem or the Riesz Representation Theorem  implies by Rellich's Theorem that $A_\nu^{-1}$ is a compact, self-adjoint operator on $L^2(\Omega)$. The Spectral Theorem then implies that the spectrum of $A_\nu$ is real and consists solely of eigenvalues.
Strong ellipticity also implies that  $A_\nu$ is an $m$-dissipative operator on $X$. 
Using a corollary of the Lumer-Phillips Theorem (see e.g. \cite{pazy2012semigroups}), we can conclude that $A_\nu$ is the generator of a strongly continuous or $C_0$ semigroup of contractions $T_\nu(t)=e^{t A_\nu}$, $t\geq 0$. In particular, $T_\nu(t):X\longrightarrow D(A_\nu)$ for $t>0$.
 %The Lumer-Phillips Theorem also imply that $\R^+ \subset \rho(A_\nu)$ and so for every $\lambda >0$, $R(\lambda I-A)=X$.

Furthermore,  the spectrum of $A_\nu$, $\sigma(A_\nu)\subset (-\infty,0)$, while the resolvent set $ \rho(A_\nu) \supset \Sigma_\omega:=\{\lambda \in \mathbb{C}, \arg \lambda \in (-\omega,\omega), \, \forall 0<\omega<\pi\}$. 
Given that $A_\nu$ is self-adjoint, the following estimate holds:
\begin{equation}  \label{eq:ResolventEst}
    \|R(\lambda;A_\nu) \|_{{\mathcal B}(L^2(\Omega))}\leq\frac{C}{|\Im\lambda|}, \qquad \lambda \notin \mathbb{R}
\end{equation}
where, given a Banach space $X$, ${\mathcal B}(X)$ denotes the space of bounded linear operators from $X$ into itself equipped with the standard operator norm $\| \cdot \|_{{\mathcal B}(X)}$ and $R(\lambda;A_\nu)$ is the resolvent operator of $A_\nu$ at $\lambda\in \rho(A_\nu)$. 
Therefore,  $A_\nu$ is sectoral of angle $\omega$ for any $\pi/2<\omega<\pi$, independent of $\nu$, and $T_\nu(t)$ is also an analytic semigroup (see e.g. \cite[Theorem 5.2, page 62]{pazy2012semigroups}).
 We can conclude that, if the initial data $\mathbf{u}_0$ is in $L^2(\Omega)$, the unique solution of problem (\ref{SemiNSE})  in $C(0,T;L^2(\Omega))$ is  given by \ 
 $\mathbf{u}_v^\nu=T_\nu(t) \mathbf{u}_0=e^{t A_\nu }\bu_0$.

%We observe that the Euler limit system of (\ref{SemiNSE}) is given by:
%\begin{equation*}
 %   \begin{cases}
  %      \partial_t\mathbf{u}_v^0=0 \\
   %     \mathbf{u}_v^0|_{t=0}=\mathbf{u}_0
   % \end{cases}
%\end{equation*}
%and so it has the stationary solution $\mathbf{u}_v^0=\mathbf{u}_0$.

\noindent We now turn to the vanishing viscosity limit, establishing the following result.
 
\begin{theorem} \label{t:DiskVL}
 Let  $\mathbf{u}_0 \in L^2(\Omega)$. Then
    \begin{equation} \label{eq:DiskVVL}
        \normone{\mathbf{u}_v^\nu-\mathbf{u}_v^0}=||e^{tA_\nu}\mathbf{u}_0-\mathbf{u}_0||_{L^\infty(0,T;L^2(\Omega))}\to 0
    \end{equation}
    as  $\nu_1, \nu_2 \to 0$, for any fixed $0<T<\infty$.
\end{theorem}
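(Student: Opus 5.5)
The plan is a density argument combined with a uniform-in-viscosity bound on the generator applied to smooth data. Since each $T_\nu(t)=e^{tA_\nu}$ is a contraction on $L^2(\Omega)$ uniformly in $\nu_1,\nu_2$ (established above via Lumer--Phillips), it suffices to prove \eqref{eq:DiskVVL} for $\mathbf{u}_0$ in a dense subclass. Indeed, for general $\mathbf{u}_0\in L^2(\Omega)$ and $\mathbf{w}$ in that subclass,
\begin{equation*}
\|e^{tA_\nu}\mathbf{u}_0-\mathbf{u}_0\|_{L^2}\le \|e^{tA_\nu}(\mathbf{u}_0-\mathbf{w})\|_{L^2}+\|e^{tA_\nu}\mathbf{w}-\mathbf{w}\|_{L^2}+\|\mathbf{w}-\mathbf{u}_0\|_{L^2}\le 2\|\mathbf{u}_0-\mathbf{w}\|_{L^2}+\|e^{tA_\nu}\mathbf{w}-\mathbf{w}\|_{L^2},
\end{equation*}
uniformly in $t\in[0,T]$. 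An $\varepsilon/3$ argument then closes the proof. I will take the dense class to be the circularly symmetric fields $\mathbf{w}=(-w(r)\sin\phi,w(r)\cos\phi)$ with $w\in C_c^\infty((\delta,1))$. These are dense in the circularly symmetric part of $L^2(\Omega)$, which is invariant under $T_\nu$ (the operator commutes with rotations; alternatively one can work with all of $C_c^\infty(\Omega)^2$, since the argument below does not use symmetry).

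For $\mathbf{w}\in C_c^\infty(\Omega)$ we have $\mathbf{w}\in D(A_\nu)$, so $e^{tA_\nu}\mathbf{w}-\mathbf{w}=\int_0^t e^{sA_\nu}A_\nu\mathbf{w}\,ds$. By contractivity,
\begin{equation*}
\sup_{t\in[0,T]}\|e^{tA_\nu}\mathbf{w}-\mathbf{w}\|_{L^2}\le T\,\|A_\nu\mathbf{w}\|_{L^2}.
\end{equation*}
Because $\mathbf{w}$ is supported away from $r=0$ and $\partial\Omega$, $A_\nu\mathbf{w}=\nabla\cdot J\nabla\mathbf{w}$ has entries that are linear in $(\nu_1,\nu_2)$ with smooth, bounded coefficients built from $\cos\phi$, $\sin\phi$ and their derivatives on the annulus. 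This gives $\|A_\nu\mathbf{w}\|_{L^2}\le c(\nu_1+\nu_2)\|\mathbf{w}\|_{H^2}$. In polar form this is simply $\nu_1 r^{-1}\partial_r(r\partial_r w)-\nu_2 w/r^2$, which is explicitly $O(\nu_1+\nu_2)$. Hence the second term tends to zero as $\nu_1,\nu_2\to0$, which gives the claim. As a by-product, for smooth compactly supported data the rate is $O(\nu_1+\nu_2)$, and a quantitative rate for general data follows from interpolation.

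The only delicate point is making sure that no constant depends on $\nu$. In particular, the contraction property must hold uniformly, which follows from $J\ge0$ (its eigenvalues are $2\nu_1,2\nu_2>0$) through the dissipativity computation $(A_\nu u,u)\le0$. I also need that the compactly supported test functions lie in $D(A_\nu)$ for every $\nu$, so that no boundary layer enters this step. The boundary mismatch is absorbed entirely by the density step, which is exactly why no correctors and no restriction on the ratio $\nu_1/\nu_2$ are needed.
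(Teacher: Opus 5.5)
Your proof is correct, but it takes a different route from the paper. The paper passes to an arbitrary sequence $\nu_n\to 0$. It notes that the domain $D=H^2(\Omega)\cap H^1_0(\Omega)$ is independent of $n$ and that $A_n\mathbf{u}\to 0$ for every $\mathbf{u}\in D$. It then invokes the Trotter--Kato approximation theorem (Pazy, Theorem 4.5, p.~88) with the zero operator as limit generator, using that all $T_n$ are contractions.

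You instead prove the needed special case of that theorem by hand. You combine uniform contractivity, the identity $e^{tA_\nu}\mathbf{w}-\mathbf{w}=\int_0^t e^{sA_\nu}A_\nu\mathbf{w}\,ds$, and an $\varepsilon/3$ density argument. The ingredients are the same as the paper's: uniform stability, and convergence of the generators to zero on a dense set. Your version avoids resolvent convergence and the reduction to sequences, and it is self-contained. It also yields the explicit bound $\sup_{t\le T}\|e^{tA_\nu}\mathbf{w}-\mathbf{w}\|_{L^2}\le cT(\nu_1+\nu_2)\|\mathbf{w}\|_{H^2}$. Compact support is not actually needed for this bound. Since $J=\nu_1\mathbf{e}_r\mathbf{e}_r^{T}+\nu_2\mathbf{e}_\phi\mathbf{e}_\phi^{T}$ has coefficients smooth on the closed annulus, the bound holds for every $\mathbf{w}\in H^2(\Omega)\cap H^1_0(\Omega)$. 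The paper's argument is shorter and fits a general framework, but it is purely qualitative.

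One caveat concerns your closing remark about rates. A quantitative rate follows only for data in intermediate spaces between $L^2$ and $H^2\cap H^1_0$, through the $K$-functional $\inf_{\mathbf{w}}\bigl(2\|\mathbf{u}_0-\mathbf{w}\|_{L^2}+cT(\nu_1+\nu_2)\|\mathbf{w}\|_{H^2}\bigr)$. For data merely in $L^2$ no uniform rate can exist, because $\|e^{tA_\nu}-I\|_{\mathcal{B}(L^2(\Omega))}=1$ for every $\nu$ and every $t>0$.
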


 %We will show that:
%\begin{equation*}
 %   \lim_{\nu_1, \nu_2 \to 0}T_\nu(t)\mathbf{u}_0=\mathbf{u}_0 \quad \text{ in} \quad L^\infty(0;T;L^2(\Omega)).
%\end{equation*}
%It is convenient to separate the two coefficients. To this end, we observe that $J=\nu_1J_1+\nu_2J_2$ with:
%\begin{equation*}
%        J_1=\left [
%\begin{pmatrix}
%\cos^2\phi & \sin\phi \cos\phi\\
%\sin\phi \cos\phi & \sin^2\phi
%\end{pmatrix}\right], \quad \quad         J_2=\left[
%\begin{pmatrix}
%\sin^2\phi & -\sin\phi \cos\phi\\
%-\sin\phi \cos\phi & \cos^2\phi
%\end{pmatrix}
%\right ],
%\end{equation*}
%so that $A_\nu=\nu_1A_1+\nu_2A_2$ with $A_1=(\nabla \cdot J_1\nabla)$ and $A_2=(\nabla\cdot J_2\nabla)$. The two operators do not commute, but it is possible to treat one of the two operators $\nu_i A_i$, $i=1,2$, as a perturbation of the other, depending on the relative size of the viscosity coefficients (see \cite[Theorem 4.5 ]{pazy2012semigroups} for more details).
%To show that:
%\begin{equation*}
 %   \lim_{\nu_1, \nu_2 \to 0 }e^{t(\nu_1A_1+ \nu_2A_2)}\mathbf{u}_0=\mathbf{u}_0 \quad \textit{in} \quad  L^2(\Omega) \quad \textit{unformly in} \quad [0,T] 
%\end{equation*}
%we use results of perturbation analysis 
\begin{proof}
It is enough to show that along any sequence $\nu_n=(\nu_{1,n},\nu_{2,n})$ we have convergence to the initial data, since then the limit is  unique. Given any such sequence (not relabeled), we let $A_n:=A_{\nu_n}$. 
We note that the domain $D:=D(A_n)=H^2(\Omega)\cap H^1_0(\Omega)$ is independent of $n$ and that 
\[
    \lim_{n\to \infty} A_n \bu = A \bu =0, \quad \forall \bu \in D,
\]
where $A$ denotes the zero operator on $X$. $A$ can be viewed as the generator of the trivial group $T_0(t)$, consisting of the identity operator on $X$ for every $t\in\mathbb{R}$.
Since  the semigroup $T_n:=T(A_n)$ is a semigroup of contractions for each $n$, the sequence $A_n$ satisfies the hypotheses of \cite[Theorem 4.5, page 88]{pazy2012semigroups}, a consequence of the Trotter-Kato approximation Theorem. We can therefore conclude that:
\[
    \bu_v^{\nu_n} = T_n \bu_0 \underset{n\to \infty}{\longrightarrow} T_0 \bu_0=\bu_0, \quad \forall \bu_0\in L^2(\Omega), \; t\geq 0,
\]
uniformly in $t$ over bounded intervals. This establishes \eqref{eq:DiskVVL}.
\end{proof}
% Using that for every $\lambda >0$, $(\lambda I-A)D$ is dense in $X$, we have:
%\begin{equation*}
 %   \lim_{\nu_1\to 0}T_\nu(t)\mathbf{u}_0=\lim_{\nu_1\to 0}e^{t\nu_1(A_1+\frac{\nu_2}{\nu_1}A_2)}\mathbf{u}_0=\mathbf{u}_0 
%\end{equation*}
%in $L^2(\Omega)$ and uniformly in $[0,T]$. If $\nu_1\leq \nu_2$ we obtain the same result.

 Using the semigroup approach, it is also possible to estimate the growth of the derivatives near the boundary. 
 As a matter of fact, the operator $A_\nu$ is sectorial with angles that is uniform  w.r.t. $\nu$ and generates an analytic semigroup of contractions. Hence, fractional powers of $-A_\nu$ are defined. In particular, 
 by interpolation $(-A_\nu)^{1/2}$ has domain $H^1_0(\Omega)$, given that $(-A_{\nu})^{1/2} = [I, -A_{\nu}]_{1/2,\infty}$, where $I$ is the identity operator (this result follows for example from \cite[Theorem 6.7.3, page 159]{BerghLofstromBook1976} and also \cite[Theorem 6.10, page 73]{pazy2012semigroups}). 
 
 It will be convenient to work with anisotropic norm. To this end, we observe that, since the bilinear  form associated to  the positive operator $-A_\nu$ is given by
 \begin{equation*}
     \int_\Omega J\nabla u\cdot \nabla v\, dx dy,\qquad u,v\in H^1_0(\Omega),
 \end{equation*}
 standard functional analysis results give:
 \begin{equation} \label{eq:SquareRootNormEquiv}
     \int_\Omega J\nabla u\cdot \nabla u\, dx dy  = \|(-A_\nu)^{1/2} u \|_{L^2(\Omega)}^2
     \qquad u\in H^1_0(\Omega).
 \end{equation}
 Above we used that $-A_\nu$ is self-adjoint, which implies the uniqueness of the self-adjoint square-root operator, which must coincide with $(-A_\nu)^{1/2}$. (We refer the reader to e.g. \cite{Bernau1968} and also the recent article \cite{SebestyenTarcsay2017} for further discussion and relevant references.)
The quadratic form associated to the bilinear form above gives rise to a seminorm in the usual way:
\begin{equation}
    \|| u\|| := \left[ \int_\Omega J\nabla u\cdot \nabla u\, dx dy \right]^{1/2},
 \end{equation}
which is equivalent to the $\dot{H}^1$-seminorm. As a matter of fact, using the orthogonality of the eigenvalues of the matrix $J$, we have that:
\begin{equation} \label{eq:TripleNormEq}
 \|| u\||^2 = \frac{1}{2} \nu^2_1\|\nabla_r u\|^2+ \frac{1}{2}\nu_2^2\|\nabla_\phi u\|^2,
\end{equation}
 where $\nabla_r := {\bf e}_r\cdot \nabla$, $\nabla_\phi := -{\bf e}_\phi\cdot \nabla$.
 By Poincar\'e's inequality it follows that:
\begin{equation} \label{eq:H1NormEquiv}
   c_\Omega  \min(\nu_1,\nu_2)\, \|u\|_{H^1_0(\Omega)} \leq  \|| u\|| \leq C_\Omega  \max(\nu_1,\nu_2)\, \|u\|_{H^1_0(\Omega)},
\end{equation}
with $c_\Omega$, $C_\Omega$ positive constants depending on the domain, but not on $\nu$.

%Next, we distinguish two cases (the case $\nu_1=\nu_2$ is the isotropic case): 
 %\begin{enumerate}[label={\bf (\alph*)}, ref={\bf (\alph*)}]
  %\item $\nu_1<\nu_2$; \label{i:DiskCase1}
   %\item $\nu_1> \nu_2$.  \label{i:DiskCase2}
  %\end{enumerate} 
 %In case \ref{i:DiskCase1}, we write $A_\nu = \nu_1\,A_{\nu'}$, where $\nu'=(1,\nu_2/\nu_1)$ and observe that, by a change of variables, $T_\nu(t)=e^{t A_\nu}= e^{\tau\, A_{\nu'}}$ with $\tau=t\nu_1$. 
% We first observe that   $A_{\nu'}$ is also sectorial with the same angle as $A_\nu$ and $m$-dissipative.
% Furthermore, the constant $C_{\nu'}$ above in \eqref{eq:ResolventEst} applied to $A_{\nu'}$ depends on the ratio between the ellipticity constant  and the operator norm of $A_{\nu'}$(see e.g. the proof of \cite[Theorem 2.7, page 211]{pazy2012semigroups}).
% Since the two eigenvalues of $A_{\nu'}$ are $2$ and $2 \nu_2/\nu_1\geq 2$, $C_\nu'$ can be bounded above by $1$ uniformly in $\nu'$. At the same time, the norm of the operator $A_{\nu'}^{1/2}$ is bounded above by $(\nu_2/\nu_1)^{1/2}$ by interpolation again.
 Then by standard semigroup results  (e.g. \cite[Theorem 6.13, page 69]{pazy2012semigroups}), 
 \begin{equation*}
%\label{tt}
    \| (-A_{\nu})^\frac{1}{2}T_{\nu}(t)\|_{{\mathcal B}(L^2(\Omega))}\leq C_{\Omega} \frac{1}{\sqrt{t}},  \quad t>0,
\end{equation*}
as $\rho(A_\nu)\supset \mathbb{C}\setminus (-\infty,0)$. %\Annacomment{Guardate la dimostrazione del teorema, la costante e' $M_1$  nel caso $0<\alpha<1$ nella notazione del teorema. Ma $M_1=1$  perche' il semigruppo e' di contrazioni e analitico. Questo si vede nella dimostrazione del Teorema 5.2, pagine 61 di Pazy }
%\textcolor{blue}{Valentina. Nell'equazione precedente è $A_{\nu'}$ giusto?}\Annacomment{Si', esatto.}
It follows that %from \eqref{eq:H1NormEquiv} that
 \[
     \|| T_\nu(t) u\|| \leq C_{t,\Omega}\lVert u \rVert_{L^2(\Omega)}  \quad t>0,
 \]
 and by using \eqref{eq:H1NormEquiv} that
 \begin{equation*}
    \|T_\nu u\|_{H^1_0(\Omega)} \leq  \frac{C_{t,\Omega}}{\sqrt{\min(\nu_1,\nu_2)}}\lVert u \rVert_{L^2(\Omega)}.
\end{equation*}
%In case \ref{i:DiskCase2}, we simply switch the role of $\nu_1$ and $\nu_2$ and conclude that
%\[
%  \| \nabla T_\nu(t) \|_{{\mathcal B}(L^2(\Omega))} \leq \frac{C_t}{\sqrt{\nu_2}}, \quad t>0.
%\]
These rates are consistent with those for the heat equation if $\nu_1=\nu_2$, for which they are sharp.
% We observe that $(-A_\nu)^\frac{1}{2}$ is a well-defined self-adjoint operator with domain $D((-A_\nu)^\frac{1}{2})=H^1_0(\Omega)$. Then we infer:
%We observe that if $u \in D(A_\nu)$, then:
%\begin{equation*}
%\begin{aligned}
 %   &||(-A_\nu)^\frac{1}{2}u||_{L^2(\Omega)}^2=((-A_\nu)^\frac{1}{2}u, (-A_\nu)^\frac{1}{2}u)=-(A_\nu u,u)\\
  %  &-\int_\Omega(\nabla\cdot J\nabla)uu=\int_\Omega(J\nabla u \cdot \nabla u)\geq \min(2\nu_1, 2\nu_2)||\nabla u||_{L^2(\Omega)}^2.
   % \end{aligned}
%\end{equation*}
%Using (\ref{tt}), we can consider two cases:
%\begin{enumerate}
%    \item If $\nu_1\leq \nu_2$: $||\nabla T_\nu(t)||\leq \frac{C_t}{\sqrt{\nu_1}}$ for $t>0$,
 %   \item If $\nu_2 <\nu_1$: $||\nabla T_\nu(t)||\leq \frac{C_t}{\sqrt{\nu_2}}$ for $t>0$.
%\end{enumerate}
If we assume more regularity on the initial data, i.e.,  $\mathbf{u}_0 \in H^1(\Omega)$, we obtain also that for $t>0$,
\begin{equation*}
    ||\nabla T_\nu (t)\mathbf{u}_0-\nabla \mathbf{u}_0||_{L^2(\Omega)}\leq \| \nabla T_\nu(t)\|_{{\mathcal B}(L^2(\Omega))}||\mathbf{u}_0||_{L^2(\Omega)}+||\mathbf{u}_0||_{H^1(\Omega)}\leq \frac{C_{t,\Omega}}{\sqrt{\min(\nu_1,\nu_2)}}\lVert \mathbf{ u}_0 \rVert_{H^1(\Omega)}.
\end{equation*}
%\Annacomment{Secondo me qui ci va $\dfrac{\max(\nu_1,\nu_2)}{\min(\nu_1,\nu_2)^2}^{1/2}$.}
In other words, the $H^1$-norm  diverges as $\nu\to 0$ with a maximal growth rate proportional to $\frac{1}{\sqrt{\min(\nu_1,\nu_2)}}$.

%t could be possible to extend all these results also near the origin considering viscosity coefficients that become constant and isotropic near $r=0$.

 To conclude the analysis it would be interesting also to study the behavior of the vorticity. However, if $\nu_1 \not= \nu_2$, the matrix $J$ is not constant and the vorticity equations are not easy to determine and to study.\vspace{1em}

The main contribution of this work is a first analysis of the vanishing viscosity limit for symmetric flows with anisotropic viscosity, a setting that had not yet been studied in previous works.
\noindent We proved that in all three symmetric configurations the vanishing viscosity limit can be established in the anisotropic setting. An important observation is that the rate of convergence depends on both viscosity coefficients, with the coefficients normal to the boundary being the most relevant parameter.
\noindent This work opens several directions for further investigation. In particular, a more detailed analysis of the limit in curved domains, as well as an extension of the semigroup approach to the study of derivative estimates in the three-dimensional settings, represent a natural continuation.

\subsection*{Acknowledgements} 
%\addcontentsline{toc}{section}{Ackowledgement}
%\thispagestyle{empty} % Remove all styles for this page to keep it plain
\noindent This work is partially supported by the US National Science Foundation grant DMS-2206453 under the supervision of Anna Mazzucato and by the ERC STARTING GRANT 2021 “Hamiltonian Dynamics, Normal
Forms and Water Waves” (HamDyWWa), Project Number: 101039762 under the supervision of Riccardo Montalto.  The Views and opinions expressed are however those of the authors only and do not necessarily reflect those of the European Union or the European Research Council. Neither the European Union nor the granting authority can be held responsible for them.
V. Galbiati thanks the Mathematics Department at Penn State University, while A. Mazzucato thanks the Mathematics Department at Milan University, for their hospitality. V. Galbiati is also founded by the University of Milan under the mobility program Thesis Abroad call 2024/2025 (I ed.).

\appendix
\section{Corrector estimates} \label{s:rates}
%metti tutti lemmi e robe varie che usi (tipo ASL)
In this appendix,   we discuss the estimates for the correctors that are used throughout the paper. They can be obtained in a way similar to those in the the isotropic case \cite{GIE20191237,plane}. Therefore we omit some details in the proofs.

 We begin by studying the simpler setting of parallel channel flows. We treat only one of the two correctors since the bounds for the other correctors are the same.  We recall that the corrector $\theta_1^0$,  solves the 1D system on $[0,\infty)$:
\begin{equation}
    \begin{cases}
    \label{heat}
        &\partial_t\theta_1^0-\partial_{ZZ}\theta_1^0=0,\\
        &\theta_1^0|_{Z=0}=-u_1^0(t,0), \quad  \theta_1^0|_{Z=\infty}=0,\\
        &\theta_1^0|_{t=0}=0.
    \end{cases}
\end{equation}
Our first result is the following theorem.

\begin{theorem}
\label{teo corr}
Let $u_1^0\in L^\infty(0,T,H^1(0,1))$. Then for any $l\in \mathbb{Z_+}$ there exist constants $C_1>0$ and $C_2>0$, depending on $T$, $l$, and on $||u_1^0(t,z)||_{L^\infty(0,T,H^1(0,1))}$, such that:
\begin{align}
    ||\langle Z \rangle^l\theta_1^0||_{L^\infty(0,T; L^2(0,+\infty))}+||\langle Z \rangle^l\partial_Z\theta_1^0||_{L^2((0,T)\times(0,+\infty))}\leq C_1, \nonumber \\
    ||\langle Z \rangle^l\theta_1^0||_{L^\infty((0,T)\times (0, + \infty))}\leq C_2. \label{eq:teo corr}
\end{align}
\end{theorem}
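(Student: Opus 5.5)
The proof uses a boundary lifting followed by weighted energy estimates. The main subtlety is at the start: we need time regularity of the boundary datum $g(t):=-u_1^0(t,0)$, not only the spatial regularity in the hypothesis.

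\textbf{Step 0: the boundary datum.} Since $u_1^0\in L^\infty(0,T;H^1(0,1))$ and $H^1(0,1)\hookrightarrow C([0,1])$, the trace is bounded, $|g(t)|\le c\|u_1^0\|_{L^\infty(0,T;H^1(0,1))}$. From the Euler system \eqref{EEPP} we have $\partial_t u_1^0=f_1$, so $g'(t)=-f_1(t,0)$ is bounded by the smoothness of $\mathbf f$. The compatibility condition \eqref{CC1} gives $g(0)=-a(0)=0$. I expect this to be the main obstacle in stating the theorem honestly: the bound on $g'$ comes from the equation and the forcing, not from the $L^\infty H^1$ norm alone. The constants $C_1,C_2$ will therefore also depend on $\|f_1\|_{L^\infty}$, which I would make explicit.

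\textbf{Step 1: lifting.} Fix a smooth cut-off $\chi$ on $[0,\infty)$ with $\chi=1$ near $0$ and $\chi=0$ for $Z\ge1$. Set $w:=\theta_1^0-g(t)\chi(Z)$. Then $w$ solves
\begin{equation*}
\partial_t w-\partial_{ZZ}w=-g'(t)\chi+g(t)\chi''=:h,
\end{equation*}
with $w|_{Z=0}=0$ and $w|_{t=0}=0$ (the latter because $g(0)=0$). The forcing $h$ is bounded and supported in $[0,1]$, so $\langle Z\rangle^l h$ is bounded in $L^2$ for every $l$. It therefore suffices to prove the bounds for $w$, since $g\chi$ trivially satisfies them.

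\textbf{Step 2: weighted $L^2$ estimate.} Multiply by $\langle Z\rangle^{2l}w$ and integrate over $(0,\infty)$. The boundary term at $Z=0$ vanishes because $w=0$ there. This gives
\begin{equation*}
\tfrac12\tfrac{d}{dt}\|\langle Z\rangle^l w\|^2+\|\langle Z\rangle^l\partial_Z w\|^2=-\int (\langle Z\rangle^{2l})'\,w\,\partial_Z w+\int\langle Z\rangle^{2l}hw .
\end{equation*}
Since $|(\langle Z\rangle^{2l})'|\le 2l\langle Z\rangle^{2l-1}\le 2l\langle Z\rangle^{2l}$, Young's inequality absorbs half of the weighted gradient term. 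The remainder is $\le C\|\langle Z\rangle^l w\|^2+C$. Gr\"onwall's inequality then yields the first line of \eqref{eq:teo corr}.

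\textbf{Step 3: weighted $H^1$ and $L^\infty$ bounds.} Next multiply by $-\partial_Z(\langle Z\rangle^{2l}\partial_Z w)$. The boundary term $\partial_t w\,\partial_Z w$ at $Z=0$ vanishes because $\partial_t w|_{Z=0}=0$. The commutator terms are of the form $\int\langle Z\rangle^{2l-1}|\partial_Z w||\partial_{ZZ}w|$. They are absorbed by $\tfrac12\|\langle Z\rangle^l\partial_{ZZ}w\|^2$ plus $C\|\langle Z\rangle^l\partial_Z w\|^2$, and the latter is integrable in time by Step 2. Gr\"onwall's inequality then gives $\sup_t\|\langle Z\rangle^l\partial_Z w\|_{L^2}\le C$. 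Finally, apply the one-dimensional inequality
\begin{equation*}
\|v\|_{L^\infty}^2\le 2\|v\|_{L^2}\|\partial_Z v\|_{L^2}
\end{equation*}
to $v=\langle Z\rangle^l w$, noting that $\partial_Z v=\langle Z\rangle^l\partial_Z w+O(\langle Z\rangle^{l-1}w)$. This gives $C_2$.

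\textbf{Alternative check.} As a cross-check, Duhamel's formula gives the explicit representation
\begin{equation*}
\theta_1^0(t,Z)=\int_0^t g'(s)\,\operatorname{erfc}\!\big(Z/(2\sqrt{t-s})\big)\,ds .
\end{equation*}
Here $\operatorname{erfc}$ denotes the complementary error function. This makes the Gaussian decay in $Z$ uniform for $t\le T$, and so confirms all polynomial-weight bounds directly.
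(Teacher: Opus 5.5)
Your proof is correct. For the weighted $L^\infty L^2$ and $L^2H^1$ bounds it follows the paper's argument: lift the boundary datum, multiply by $\langle Z\rangle^{2l}w$, then apply Young and Gr\"onwall. Using the cut-off $\chi$ instead of $e^{-Z}$ makes no difference. You carry it out more carefully than the paper, though. The paper's equation \eqref{heat2} for $w=\theta_1^0+u_1^0(t,0)e^{-Z}$ omits the term $\partial_t u_1^0(t,0)e^{-Z}$; the right-hand side should be $(\partial_t u_1^0(t,0)-u_1^0(t,0))e^{-Z}$. Your Step 0 rightly points out that the gradient bound needs time regularity of the trace. Here that regularity comes from $\partial_t u_1^0=f_1$ together with $a(0)=0$. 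An $L^\infty(0,T;H^1)$ bound alone cannot suffice: with $u_1^0=\sin(\omega t)\varphi(z)$ and $\omega\to\infty$, the $L^2_tH^1_Z$ norm of the corrector grows like $\omega^{1/4}$. So the constants must indeed depend on $f_1$.

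For the weighted $L^\infty$ bound your route differs from the paper's. The paper uses the maximum principle and compares $\pm\theta_1^0$ with the explicit supersolution $Ge^{t-Z}$, where $G=\sup|u_1^0(\cdot,0)|$. That argument needs neither time regularity nor compatibility. The resulting pointwise bound $|\theta_1^0|\le Ge^{T-Z}$ gives every weighted $L^\infty$ and $L^2$ bound on $\theta_1^0$ itself at once. Your second weighted energy estimate plus the one-dimensional Agmon inequality costs more: it uses $g'\in L^\infty$ and $g(0)=0$, the latter so that $w(0)\in H^1_0$. In return it yields $\sup_t\|\langle Z\rangle^l\partial_Z w\|_{L^2}\le C$, which is the kind of bound the paper needs next for $\partial_z\theta_1^0$.

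One small point in Step 3: handle the forcing term $\int h\,\partial_Z(\langle Z\rangle^{2l}\partial_Z w)$ by Cauchy--Schwarz against $\langle Z\rangle^{l}\partial_{ZZ}w$ and absorb it. Do not integrate it by parts, since that leaves the boundary term $h(t,0)\partial_Z w(t,0)$. Your Duhamel representation is a valid independent check, and it even gives Gaussian pointwise bounds on $\partial_Z\theta_1^0$.
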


\begin{proof}
We define $w(t,Z):=\theta_1^0(t,Z)+u_1^0(t,0)e^{-Z}$ and we observe that we have imposed enough regularity on $u_1^0$ have a well-defined trace at $z=0$. Then $w$ satisfies:
\begin{equation}
    \begin{cases}
    \label{heat2}
        &\partial_t w-\partial_{ZZ}w=u_1^0(t,0)e^{-Z},\\
        &w|_{Z=0}= w|_{Z=\infty}=0,\\
        &w|_{t=0}=0.
    \end{cases}
\end{equation}
 Multiplying (\ref{heat2}) by $\langle Z \rangle ^{2l}w$ and integrating over $(0,+\infty)$ gives:
\begin{equation*}
    \begin{aligned}
        &\frac{1}{2}\frac{d}{dt}||\langle Z \rangle^lw||^2_{L^2(0,+\infty)}+\int_0^{+\infty}\partial_Zw\partial_Zw\langle Z \rangle^{2l}+\int_0^{+\infty}l(1+Z^2)^{l-1}2Zw\partial_Zw\\
        &\qquad \qquad \qquad \leq |u_1^0(t,0)|\, ||e^{-Z}\langle Z \rangle^{l}||_{L^2(0,+\infty)}\, ||\langle Z \rangle^{l}w||_{L^2(0,+\infty)}.
    \end{aligned}
\end{equation*}
We bound the last term on the left-hand side as
\begin{equation*}
\begin{aligned}
    &\int_0^{+\infty}l(1+Z^2)^{l-1}2Zw\partial_Zw\leq c_l\int_0^{+\infty}(1+Z^2)^lw\partial_Zw\leq c_l||\langle Z \rangle^lw||_{L^2(0,+\infty)}||\langle Z \rangle^l\partial_Zw||_{L^2(0,+\infty)}\\
    &\qquad \qquad \leq \frac{c_l^2}{2}||\langle Z \rangle^lw||_{L^2(0,+\infty)}^2+\frac{1}{2}||\langle Z \rangle^l\partial_Zw||_{L^2(0,+\infty)}^2,
    \end{aligned}
\end{equation*}
for some constant $c_l$.
By integrating over time and using Grönwall's inequality, it follows that
\begin{equation*}
    ||\langle Z \rangle^lw||^2_{L^{\infty}(0,T;L^2 (0, + \infty))}+||\langle Z \rangle^l\partial_Zw||_{L^2((0,T)\times  (0, + \infty))}^2\leq C(l,T,||u_1^0(\cdot,0)||_{L^\infty(0,T)}).
\end{equation*}

Thus the following estimates for $\theta_1^0$ holds:
\begin{equation*}
\begin{aligned}
    &||\langle Z \rangle^l\theta_1^0||_{L^\infty(0,T; L^2 (0, + \infty))}+ ||\langle Z \rangle^l\partial_Z\theta_1^0||_{L^2((0,T)\times  (0, + \infty))}\leq ||\langle Z \rangle^lw||_{L^\infty(0,T;L^2 (0, + \infty))}\\
    & \qquad + ||\langle Z \rangle^l\partial_Zw||_{L^2((0,T)\times  (0, + \infty))}+||\langle Z \rangle^l u_1^0(t,0)e^{-Z}||_{L^\infty(0,T;L^2 (0, + \infty))}\\
    &\qquad \qquad + ||\langle Z \rangle^l\partial_Zu_1^0(t,0)e^{-Z}||_{L^2((0,T)\times  (0, + \infty))}
    \leq C_1 (||u_1^0(t,z)||_{L^\infty(0,T,H^1(0,1))}),
    \end{aligned}
\end{equation*}
using that $\langle Z \rangle e^{-Z}$ is uniformly bounded on $[0,\infty)$.

Next,  we observe that by the maximum principle for the heat equation, we have: 
\[
  ||\theta_1^0||_{L^\infty(0,T;L^\infty  (0, + \infty)))}\leq G:=||u_1^0||_{L^\infty((0,T)\times(0,1))}.
\]
We let $k(t,Z):=Ge^{t-Z}$, which  satisfies the following problem:
\begin{equation*}
    \begin{cases}
        &\partial_tk-\partial_{ZZ}k=0\\
        &k|_{t=0}=Ge^{-Z}\geq 0 \quad k|_{Z=0}=Ge^{t}>G \quad k|_{Z=\infty}=0.
    \end{cases}
\end{equation*}
By the maximum principle for the heat equation, it follows that $\theta_1^0\leq Ge^{T-Z}$. Then for any $l\geq 0$, 
\begin{equation*}
    ||\langle Z \rangle ^l\theta_1^0||_{L^\infty((0,T) \times (0, + \infty))}\leq G||\langle Z \rangle ^le^{T-Z}||_{L^\infty(0,+\infty)}\leq G e^T,
\end{equation*}
using that $\langle Z \rangle^l e^{-Z}$ is uniformly bounded in $Z$ for any $l$.
\end{proof}
 To tackle the behavior of the vorticity in the zero-viscosity limit, we need higher-order estimates.

\begin{theorem}
    Under the hypotheses of Theorem \ref{teo corr}, for $1< p\leq\infty$:
    \begin{equation*}
        \begin{aligned}
        &||\theta_1^0||_{L^\infty(0,T;L^p(0,1))}\leq \nu_3^\frac{1}{2p}C(T,||u_1^0||_{L^\infty(0,T;H^1(0,1))}),\\
    &||\partial_z\theta_1^0||_{L^\infty(0,T;L^2(0,1))}\leq \nu_3^{-\frac{1}{4}}C(T,||u_1^0||_{L^\infty(0,T;H^1(0,1))}),\\
    &||\partial_z\theta_1^0||_{L^\infty(0,T;L^1(0,1))}\leq C(T,||u_1^0||_{L^\infty(0,T;H^1(0,1))}),
        \end{aligned}
    \end{equation*}
    where in this case we are considering the integrals respect to $z$, not to $Z$.
    
\end{theorem}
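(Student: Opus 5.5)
The plan is to reduce everything to the stretched-variable estimates of Theorem \ref{teo corr} through the change of variables $Z=z/\sqrt{\nu_3}$, $dz=\sqrt{\nu_3}\,dZ$, $\partial_z=\nu_3^{-1/2}\partial_Z$. Throughout, $\theta_1^0$ is understood as the function $z\mapsto\theta_1^0(t,z/\sqrt{\nu_3})$ restricted to $(0,1)$, and we simply extend the $z$-integrals to $(0,\infty)$ in $Z$.

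\emph{The $L^p$ bound.} For $p=\infty$ the estimate is immediate from the second line of \eqref{eq:teo corr} with $l=0$; the $\nu_3^{0}$ factor is trivial. For $2\le p<\infty$ we interpolate:
\[
\int_0^1|\theta_1^0|^p\,dz\le \sqrt{\nu_3}\,\|\theta_1^0\|_{L^\infty}^{p-2}\|\theta_1^0\|_{L^2(0,\infty)}^2,
\]
and both factors are controlled by Theorem \ref{teo corr}. This gives $\nu_3^{1/(2p)}$ after taking the $p$-th root.

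For $1<p<2$ Hölder on the half-line fails, since we lack integrability at infinity without weights. Instead we use the weighted bound $\langle Z\rangle^l\theta_1^0\in L^\infty$ with $l=2$, so that $|\theta_1^0|\le C\langle Z\rangle^{-2}$. Then
\[
\int_0^\infty|\theta_1^0|^p\,dZ\le C^p\int_0^\infty\langle Z\rangle^{-2p}\,dZ<\infty
\]
for $p>1/2$. The same argument in fact covers all $p>1$ at once, so one can use it uniformly.

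\emph{The derivative bounds.} Here
\[
\|\partial_z\theta_1^0\|_{L^2(0,1)}=\nu_3^{-1/4}\|\partial_Z\theta_1^0\|_{L^2(0,\infty)}
\]
and
\[
\|\partial_z\theta_1^0\|_{L^1(0,1)}=\|\partial_Z\theta_1^0\|_{L^1(0,\infty)}\le\|\langle Z\rangle\partial_Z\theta_1^0\|_{L^2}\|\langle Z\rangle^{-1}\|_{L^2}.
\]
So both reduce to a uniform-in-time weighted bound $\sup_t\|\langle Z\rangle\partial_Z\theta_1^0\|_{L^2(0,\infty)}\le C$.

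\emph{Main obstacle.} Theorem \ref{teo corr} only provides $\partial_Z\theta_1^0$ in $L^2$ in time, not $L^\infty$ in time, so a higher-order energy estimate is needed. As in its proof, I set $w=\theta_1^0+u_1^0(t,0)e^{-Z}$. I then multiply \eqref{heat2} by $-\partial_Z(\langle Z\rangle^{2}\partial_Zw)$, or equivalently differentiate and test against $\langle Z\rangle^2\partial_t w$. Since $w(t,0)=0$ gives $\partial_tw(t,0)=0$, the boundary terms vanish. This yields
\[
\tfrac{d}{dt}\|\langle Z\rangle\partial_Zw\|^2+\|\langle Z\rangle\partial_{ZZ}w\|^2\lesssim \|\langle Z\rangle\partial_Zw\|^2+|u_1^0(t,0)|^2,
\]
where the weight-commutator term is absorbed via Young exactly as in Theorem \ref{teo corr}. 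Grönwall then closes the estimate, using $w(0)=0$ from the compatibility condition $u_1^0(0,0)=a(0)=0$.

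The delicate point is the forcing term. If one uses $\partial_t w$ as the test function, the time derivative $\partial_tu_1^0(t,0)=f_1(t,0)$ appears. This is bounded because $f_1$ is smooth; alternatively, the $\partial_{ZZ}$ multiplier avoids it. Finally, subtracting the explicit profile $u_1^0(t,0)e^{-Z}$, whose weighted derivative norms are bounded by $\|u_1^0\|_{L^\infty H^1}$ via the trace, gives the claim for $\theta_1^0$.
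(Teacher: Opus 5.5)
Your proof is correct, but it takes a different route from the paper's.

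\textbf{The paper's route.} The paper never uses the pointwise decay from Theorem~\ref{teo corr}.
\begin{itemize}
\item For the $L^p$ bound it runs a separate $L^p$ energy estimate on $w$: it tests \eqref{heat2} with $|w|^{p-2}w$ and applies Gr\"onwall.
\item For the derivative bounds it differentiates \eqref{heat2} in $Z$, so that $\partial_Z w$ solves a heat equation with the Neumann-type condition $\partial_{ZZ}w|_{Z=0}=-u_1^0(t,0)$.
\item It then does an unweighted $L^2$ energy estimate for $\partial_Z w$.
\item Separately, it proves the $L^1$ bound with the convex regularization $F_\lambda(x)=\sqrt{\lambda^2+x^2}$ of $|x|$, a Kato-type argument in which the boundary flux is bounded by $|u_1^0(t,0)|$.
\end{itemize}

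\textbf{Your route.} You read off all the $L^p$ bounds from $|\theta_1^0|\le C\langle Z\rangle^{-2}$ and the change of variables. You get both derivative bounds from a single weighted estimate: you test the undifferentiated equation with $-\partial_Z(\langle Z\rangle^2\partial_Z w)$, then use $\langle Z\rangle^{-1}\in L^2(0,\infty)$ to pass to $L^1$.

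\textbf{What your route buys.} It is shorter, and it handles the boundary more cleanly. Since $\partial_t w(t,0)=0$, your multiplier produces no boundary term at all. The paper's differentiated equation produces the flux $u_1^0(t,0)\,\partial_Z w(t,0)$, which it bounds loosely by $c^2+\|\partial_Z w\|_{L^2}^2$. Done properly, this needs the trace inequality $|\partial_Z w(t,0)|^2\le 2\|\partial_Z w\|_{L^2}\|\partial_{ZZ}w\|_{L^2}$ and absorption into the dissipation.

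\textbf{What the paper's route buys.} It does not depend on the weighted pointwise decay, and hence not on the comparison argument behind it. Its $L^1$ estimate is of contraction type: it gives a bound growing at most linearly in $T$, with no weights and no Gr\"onwall.

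\textbf{A minor correction.} Your side remark is not accurate: testing with $\langle Z\rangle^2\partial_t w$ would not bring in $\partial_t u_1^0(t,0)$. The forcing $u_1^0(t,0)e^{-Z}$ pairs with $\langle Z\rangle^2\partial_t w$ and is absorbed by $\|\langle Z\rangle\partial_t w\|_{L^2}^2$. The derivative of the boundary datum would only appear if you worked with $\theta_1^0$ instead of $w$. This does not affect your proof, since you use the $\partial_Z(\langle Z\rangle^2\partial_Z\,\cdot\,)$ multiplier.
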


\begin{proof}
    We consider again the system  (\ref{heat2}) for $w$.
    Multiplying (\ref{heat2}) by $w^{p-1}$ for $p>1$ and integrating over $(0,+\infty)$ yields:
    \begin{equation*}
    \begin{aligned}
        &\frac{1}{p}\frac{d}{dt}||w||_{L^p(0,+\infty)}^p+(p-1)\int_0^\infty\partial_Zww^{p-2}=\int_0^\infty u_1^0(t,0)e^{-Z}w^{p-1}\\
        &\qquad \leq\left(\int_0^\infty u_1^0(t,0)^pe^{-pZ}\right)^\frac{1}{p}\left(\int_0^\infty w^p\right)^\frac{p-1}{p}=||u_1^0(t,0)e^{-Z}||_{L^p(0,\infty)}||w||_{L^p(0,\infty)}^{p-1}\\
        &\qquad \qquad \leq c_1||e^{-Z}||_{L^p(0,+\infty)}^p+c_2||w||_{L^p(0,\infty)}^p\leq c_1+c_2||w||_{L^p(0,\infty)}^p.
        \end{aligned}
    \end{equation*}
    We then integrate in time and apply Grönwall's inequality to obtain:
    \[
        ||w||_{L^\infty(0,T,L^p(0,+\infty))}\leq C, 
    \]    
    which implies
    \[
       ||\tilde{w}||_{L^\infty(0,T,L^p(0,1))}\leq \nu_3^\frac{1}{2p}C(T,||u_1^0(\cdot,0)||_{L^\infty(0,T)}),
    \]
    where $\tilde{w}(t,z)= \varphi(z) \, w(t,Z)$ with $\varphi$ a suitable cut-off.
    We note that $\partial_z w$ satisfies
    \begin{equation}
    \label{w}
        \frac{\partial}{\partial t}\frac{\partial w}{\partial Z}-\partial_{ZZ}\frac{\partial w}{\partial Z}=-u_1^0(t,0)e^{-Z},
    \end{equation}
    with the conditions:
    \begin{equation*}
    \begin{aligned}
        &\partial_{ZZ}w=-u(t,0) \quad \textit{at }\quad  z=0,\\
        &\partial_Zw \to 0 \quad \textit{as } \quad z\to \infty ,\quad \partial_Zw=0 \quad \textit{at } \quad t=0.
        \end{aligned}
    \end{equation*}
    Multiplying (\ref{w}) by $\partial_Zw$ and integrating over $(0,\infty)$ gives:
    \begin{equation*}
        \begin{aligned}
            &\frac{1}{2}\frac{d}{dt}||\partial_Zw||_{L^2(0,+ \infty)}^2+||\partial_{ZZ}w||_{L^2(0,+ \infty)}^2\leq c||\partial_Zw||_{L^2(0,+ \infty)}+|\partial_{ZZ}w\partial_Zw|_{Z=0}\\
            &=c||\partial_Zw||_{L^2(0, + \infty)}+|u_1^0(t,0)||\partial_Zw|_{Z=0}\leq c^2+||\partial_Zw||_{L^2(0,+ \infty)}^2.
        \end{aligned}
    \end{equation*}
   We once again integrate in time and apply Grönwall's inequality:
    \begin{equation*}
        ||\partial_Zw||_{L^\infty(0,T;L^2(0,+ \infty))}\leq C\Rightarrow||\partial_zw||_{L^\infty(0,T;L^2(0,1))}\leq \nu_3^{-\frac{1}{4}}C(T,||u_1^0(\cdot,0)||_{L^\infty(0,T)}).
    \end{equation*}
    To treat the case $p-1$,  we introduce a standard regularization of the absolute value: $F_\lambda(x)=\sqrt{\lambda^2+x^2}$, and use it  too approximate $L^1$-norm:
    \begin{equation}
    \label{l1}
    \begin{aligned}
        &\frac{d}{dt}\int_0^\infty F_\lambda(\partial_Zw)dZ=\int_0^\infty F'_\lambda(\partial_Zw)\frac{\partial}{\partial t}(\partial_Zw)dZ\\
        &\qquad \qquad =-\int_0^\infty F'_\lambda(\partial_Zw)\partial_{ZZ}w-\int_0^\infty F'_\lambda(\partial_Zw)u_1^0(t,0)e^{-Z}dZ.
        \end{aligned}
    \end{equation}
    From the convexity of $F_\lambda$:
    \begin{equation*}
        \partial_{ZZ}(F_\lambda(\partial_Zw))=F'_\lambda(\partial_Zw)\partial_{ZZ}w+F''_\lambda(\partial_Zw)(\partial_{ZZ}w)^2\geq F'_\lambda(\partial_Zw).
    \end{equation*}
    Then integrating by parts the first term on the right-hand-side of (\ref{l1}) gives
    \begin{equation*}
        \int_0^\infty F'_\lambda(\partial_Zw)\partial_{ZZ}w\leq \int_0^\infty \partial_{ZZ}(F_\lambda(\partial_Zw))dZ=F'_\lambda(\partial_Zw)\partial_{ZZ}w|_{Z=0},
    \end{equation*}
    so that  (\ref{l1}) becomes:
    \begin{equation*}
        \frac{d}{dt}\int_0^\infty F_\lambda(\partial_Zw)dZ\leq F'_\lambda(\partial_Zw)\partial_{ZZ}w|_{Z=0}+\int_0^\infty F'_\lambda(\partial_Zw)u_1^0(t,0)e^{-Z}dZ.
    \end{equation*}
    Considering that $|F'_\lambda(x)|\leq 1$ and that the others terms are bounded, we obtain:
    \begin{equation*}
        \frac{d}{dt}\int_0^\infty F_\lambda(\partial_Zw)dZ\leq c,
    \end{equation*}
    with $c$ independent of $\lambda$.
    We again integrate in time and  take the limit for $\lambda\to 0$:
        \begin{equation*}
        \lim_{\lambda\to 0}\int_0^\infty F_\lambda(\partial_Zw)dZ\leq c,
    \end{equation*}
    uniformly in $\nu>0$ for $T$.
    Finally,the Monotone Convergence Theorem implies
    \begin{equation*}
        ||\partial_Zw||_{L^\infty(0,T;
        L^1(0,\infty))}\leq C \Rightarrow||\partial_zw||_{L^\infty(0,T;L^1(0,1))}\leq C(T,||u_1^0(\cdot,0)||_{L^\infty(0,T)})
    \end{equation*}
    from which the results for $\theta_1^0$ follows.
\end{proof}

 We now turn to  $\theta_2^0$, which satisfies the following system:
\begin{equation}
    \begin{cases}
    \label{theta2}
        &\partial_t\theta_2^0+\theta_1^0\partial_x\theta_2^0+u_1^0(t,0)\partial_x\theta_2^0-\partial_{ZZ}\theta_2^0=-\theta_1^0\partial_xu_2^0(t,x,0)\\
        &\theta_2^0|_{Z=0}=-u_2^0(t,x,0), \theta_2^0|_{Z=\infty}=0\\
        &\theta_2^0|_{t=0}=0.
    \end{cases}
\end{equation}
We collect needed estimates for this corrector component in the following result, recalling that his spatial domain is given by $\Omega_\infty=[0,L]\times[0,\infty)$:

\begin{theorem} \label{teo corr2}
Let $u_1^0\in L^\infty(0,T;H^1(0,1))$ and $u_2^0\in L^\infty(0,T;H^2(\Omega))$. Then, for each $l \in \mathbb{Z_+}$, there exists $C_1>0$, $C_2>0$, depending on $||u_1^0||_{L^\infty((0,T)\times(0,1))}, ||u_2^0||_{L^\infty((0,T);H^2(\Omega))}$  such that for $i=0,1,2,3,4$:
\begin{align} 
    &||\langle Z \rangle^l\partial_x^i\theta_2^0||_{L^\infty(0,T;L^2(\Omega_\infty))}+||\langle Z \rangle ^l\partial_x^i\partial_Z\theta_2^0||_{L^2(0,T;L^2(\Omega_\infty))}\leq C_1, \nonumber \\
    &||\langle Z \rangle^l\partial_x^i\theta_2^0||_{L^\infty((0,T)\times \Omega_\infty)}\leq C_2.
    \label{eq:teo corr2}
\end{align}
\end{theorem}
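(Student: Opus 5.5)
The plan is to follow the proof of Theorem \ref{teo corr}: lift the boundary datum, run weighted energy estimates, and then use a comparison argument for the $L^\infty$ bounds. The new ingredients are the transport terms and the inhomogeneous source $-\theta_1^0\partial_x u_2^0(t,x,0)$. First I will remove the boundary datum by setting
\[
w(t,x,Z):=\theta_2^0(t,x,Z)+u_2^0(t,x,0)e^{-Z}.
\]
Then $w$ vanishes at $Z=0$, at $Z=\infty$ and at $t=0$, thanks to the compatibility condition $\mathbf{u}_0=0$ on the wall. It solves
\[
\partial_t w+(\theta_1^0+u_1^0(t,0))\partial_x w-\partial_{ZZ}w=S,
\]
where $S$ collects the following terms: $-\theta_1^0\partial_xu_2^0(t,x,0)$; $e^{-Z}\bigl(\partial_t u_2^0+u_1^0(t,0)\partial_x u_2^0\bigr)(t,x,0)$, which by the Euler equation equals $e^{-Z}f_2(t,x,0)$; $\theta_1^0e^{-Z}\partial_xu_2^0(t,x,0)$; and $-u_2^0(t,x,0)e^{-Z}$. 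Every term in $S$ carries either $e^{-Z}$ or $\theta_1^0$. By Theorem \ref{teo corr}, $\langle Z\rangle^l\theta_1^0$ is bounded in $L^\infty$ and in $L^\infty L^2$, so $\|\langle Z\rangle^l\partial_x^i S\|_{L^2(0,T;L^2(\Omega_\infty))}$ is controlled by traces of $\partial_x^i u_2^0$ and $\partial_x^i\partial_t u_2^0$ on $z=0$. The hypothesis $H^m$, $m>5$, is what makes traces of up to four tangential derivatives available.

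Next comes the key structural point. The transport coefficient $\theta_1^0+u_1^0(t,0)$ is independent of $x$, so $\partial_x$ commutes with the equation. Hence $w_i:=\partial_x^i w$ solves the same equation with source $\partial_x^i S$, for $i=0,\dots,4$. I will multiply by $\langle Z\rangle^{2l}w_i$ and integrate over $\Omega_\infty$. The transport term vanishes after integrating by parts in $x$, using periodicity and the fact that the weight depends on $Z$ only. The commutator from the weight is absorbed exactly as in Theorem \ref{teo corr}: it is bounded by $c_l\|\langle Z\rangle^l w_i\|\|\langle Z\rangle^l\partial_Zw_i\|$ and then Young's inequality applies. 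Gr\"onwall's inequality then gives the $L^\infty L^2$ bound on $\langle Z\rangle^l w_i$ and the $L^2L^2$ bound on $\langle Z\rangle^l\partial_Zw_i$. Returning to $\theta_2^0$ via the triangle inequality, with $\langle Z\rangle^le^{-Z}$ bounded, yields the first line of \eqref{eq:teo corr2}.

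For the $L^\infty$ bound I would try two routes. The first is a comparison argument: the operator $\partial_t+b(t,Z)\partial_x-\partial_{ZZ}$ obeys a maximum principle on $[0,L]_{per}\times[0,\infty)$. Bounding the source by $M e^{-Z}$ with $M$ depending on $T$ and the data, as allowed by the pointwise estimate $|\theta_1^0|\le Ge^{T-Z}$ from the proof of Theorem \ref{teo corr}, I will use barriers of the form $K e^{t-Z}$, as there, possibly with $e^{-Z/2}$ to absorb the source. This gives $|w_i|\lesssim e^{-Z/2}$, so $\langle Z\rangle^l w_i$ is bounded. The second route, if the barrier computation fails, is to apply the anisotropic embedding to $\langle Z\rangle^l w_i$, interpolating the $L^2$ bounds for $w_i$, $\partial_x w_i$ and $\partial_Z w_i$. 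This requires an $L^\infty_t$ bound on $\partial_Z w_i$, obtained by multiplying by $-\langle Z\rangle^{2l}\partial_{ZZ}w_i$.

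I expect the main obstacle to be this $L^\infty$ step, especially at the top order $i=4$. Both routes need one more tangential derivative, or a time derivative, of the trace of $u_2^0$ than the energy estimate does, so the regularity count against $m>5$ has to be checked carefully. A secondary concern is justifying the maximum principle with the $x$-transport term on the unbounded domain, which needs growth restrictions on $w$ at infinity; these are supplied by the weighted $L^2$ bounds already proved.
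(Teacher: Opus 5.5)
Your proposal is correct. For the weighted $L^\infty L^2$ and $L^2L^2$ bounds it is the paper's argument: the same lifting $w=\theta_2^0+u_2^0(t,x,0)e^{-Z}$, the same observation that $\partial_x$ commutes with the equation because $\theta_1^0+u_1^0(t,0)$ is independent of $x$ (the paper says this in one line), and the same weighted energy estimate with the weight commutator absorbed by Young and Gr\"onwall. Your source term is more careful than the paper's $H$: you keep $e^{-Z}f_2(t,x,0)$ and the correct sign of $-e^{-Z}u_2^0(t,x,0)$. You also correctly note that $w|_{t=0}=0$ uses the compatibility condition.

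The real difference is the $L^\infty$ step. The paper tests with $p|w|^{p-2}w$, bounds $\frac{d}{dt}\|w\|_{L^p}$ by $L^p$ norms of the source uniformly in $p$, and lets $p\to\infty$. This avoids any maximum principle, but as written it bounds only the unweighted $\|w\|_{L^\infty}$. Getting the factor $\langle Z\rangle^l$ in the statement would mean rerunning it on $\langle Z\rangle^l w$, whose equation acquires bounded first- and zeroth-order terms in $Z$ that are harmless uniformly in $p$.

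Your comparison argument gives the weight directly, together with pointwise decay. Since $|\theta_1^0|\le Ge^{T-Z}$, you have $|\partial_x^iS|\le Me^{-Z}$. The function $\bar w=\frac43Me^{t-Z/2}$ satisfies $(\partial_t+b\,\partial_x-\partial_{ZZ})\bar w=\frac34\bar w\ge Me^{-Z}$, so $|w_i|\le\frac43Me^{T-Z/2}$ and every weight is controlled at once.

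Your worry about a maximum principle for this degenerate operator on an unbounded domain goes away at the energy level. Test the equation for $w_i-\bar w$ with $(w_i-\bar w)_+$. The transport term integrates to zero by periodicity, precisely because $b$ and $\bar w$ do not depend on $x$. The weighted $L^2$ bounds already justify the integrations by parts. This route is better than your anisotropic-embedding fallback, which would cost a full extra tangential derivative.

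Your regularity count needs one correction. The source for $w_i$ contains $\theta_1^0\,\partial_x^{i+1}u_2^0(t,x,0)$, so $i=4$ needs five tangential derivatives of the trace, not four. They are needed in $L^2(0,L)$ for the energy bound and in $L^\infty(0,L)$ for the sup bound; the paper's $p\to\infty$ step needs exactly the same. Traces of $H^m(\Omega)$ lie in $H^{m-1/2}$, so this requires roughly $m\ge 11/2$ and $m>6$ respectively. Thus $m>5$ is not quite enough, and the stated hypothesis $u_2^0\in L^\infty(0,T;H^2(\Omega))$ does not even cover $i=1$. This is a defect of the statement, which the paper's proof also glosses over (it already uses $\|u_2^0(t,\cdot,0)\|_{H^2(0,L)}$ at $i=0$), not of your argument. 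You should still state explicitly the regularity you actually use.
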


\begin{proof} With slight abuse of notation, we let $w(t,Z):=\theta_2^0(t,x,Z)+e^{-Z}u_2^0(t,x,0)$ and we observe that we have enough regularity on the initial condition such that $u_2^0$ is a classical solution of (\ref{EEPP}), so we can restrict this equation to the boundary $z=0$, in order to obtain an equation satisfies by $u_2^0(t,x,0)$.
The function $w$ satisfies the following system on $\Omega_\infty$:
\begin{equation}
    \begin{cases}
    \label{omega}
        &\partial_tw+\theta_1^0\partial_xw+u_1^0(t,0)\partial_xw-\partial_{ZZ}w=-\theta_1^0\partial_xu_2^0(t,\cdot,0)+H,\\
        &w|_{Z=0}=0, w|_{Z=\infty}=0,\\
        &w|_{t=0}=0,
    \end{cases}
\end{equation}
where $H(t,x,Z):=e^{-Z}(\theta_1^0(t,Z)\partial_xu_2^0(t,x,0)+u_2^0(t,x,0))$.
 Multiplying the first equation in (\ref{omega}) by $\langle Z \rangle ^{2l}w$ and integrating by parts over $\Omega_\infty$ gives:
\begin{equation*}
\begin{aligned}
    &\frac{1}{2}||\langle Z \rangle ^{l}w||_{L^2(\Omega_\infty)}^2+||\langle Z \rangle ^{l}\partial_Zw||_{L^2(\Omega_\infty)}^2\leq ||u_2^0(t,x,0)||_{H^1(0,L)}||\langle Z \rangle ^l\theta_1^0||_{L^2(\Omega_\infty)}||\langle Z \rangle ^l w||_{L^2(\Omega_ \infty)}\\
    &+||u_2^0(t,x,0)||_{H^2(0,L)}||\langle Z \rangle^l e^{-Z}||_{L^2(\Omega_\infty)}||\langle Z \rangle ^lw||_{L^2(\Omega_\infty)}+\frac{c_l^2}{2}||\langle Z \rangle ^{l}w||_{L^2(\Omega_\infty)}^2+\frac{1}{2}||\langle Z \rangle ^{l}\partial_Zw||_{L^2(\Omega_ \infty)}^2.
    \end{aligned}
\end{equation*}
We next use results in \autoref{teo corr} and apply Grönwall's inequality to conclude that:
\begin{equation*}
\begin{aligned}
    &||\langle Z \rangle ^{l}w||_{L^\infty(0,T; L^2(\Omega_\infty))}+||\langle Z \rangle ^{l}\partial_Zw||_{L^\infty(0,T;L^2(\Omega_ \infty))}\\
    &\qquad \qquad \leq C\Big(||u_1^0||_{L^\infty((0,T)\times (0, + \infty))}, ||u_2^0||_{L^\infty((0,T);H^2(\Omega))},T,l \Big).
    \end{aligned}
\end{equation*}
Since the derivatives $\partial_x^i\theta_2^0$ $i=1,2,3,4$ satisfy the same equation as $\theta_2^0$, similar estimates holds.
Similarly, by multiplying both sides of (\ref{omega}) by $p|w|^{p-2}w, p>2$, and integrating by parts, it follows that
\begin{equation*}
\begin{aligned}
    &\frac{d}{dt}||w||^p_{L^p(\Omega_\infty)}+\frac{4(p-1)}{p}||\partial_Z|w|^\frac{p}{2}||_{L^2(\Omega_  \infty)}^2\\
    &\qquad \qquad \leq p||u_2^0(t,.,0)||_{H^2(0,L)}||\theta_1^0||_{L^p(0,+\infty)}||w||^{p-1}_{L^p(\Omega)}+p||H||_{L^p(\Omega_\infty)}||w||_{L^p(\Omega_ \infty)}^{p-1}.
    \end{aligned}
\end{equation*}
We then divide both sides by $||w||_{L^p(\Omega_\infty)}^{p-1}$:
\begin{equation*}
    \frac{d}{dt}||w||_{L^p(\Omega_ \infty)}\leq ||u_2^0(t,.,0)||_{L^\infty(0,L)}||\theta_1^0||_{L^p(0,+ \infty)}+||H||_{L^p(\Omega_ \infty)}.
\end{equation*}
Finally, by taking the limit $p\to +\infty$ and integrating in time we have:
\begin{equation*}
||w||_{L^\infty((0,T)\times \Omega_\infty )}\leq C\Big(||u_1^0||_{L^\infty((0,T)\times (0, + \infty))}, ||u_2^0||_{L^\infty(0,T;H^2(\Omega))},T,l \Big).
\end{equation*}
\end{proof}

 By proceeding similarly to what done for $\theta_1^0$, we can establish higher-order estimates for $\theta_2^0$.
 
\begin{theorem} \label{teo corr3}
   Under the hypotheses of Theorem \ref{teo corr2} for $1< p\leq\infty$:
    \begin{equation} 
        \begin{aligned}
        &||\theta_2^0||_{L^\infty(0,T;L^p(0,1))}\leq \nu_3^\frac{1}{2p}C(||u_1^0||_{L^\infty((0,T)\times (0, + \infty))}, ||u_2^0||_{L^\infty((0,T);H^2(\Omega))},T),\\
    &||\partial_z\theta_2^0||_{L^\infty(0,T;L^2(0,1))}\leq \nu_3^{-\frac{1}{4}}C(||u_1^0||_{L^\infty((0,T)\times (0, + \infty))}, ||u_2^0||_{L^\infty((0,T);H^2(\Omega))},T),\\
    &||\partial_z\theta_2^0||_{L^\infty(0,T;L^1(0,1))}\leq C\Big(||u_1^0||_{L^\infty((0,T)\times (0, + \infty))}, ||u_2^0||_{L^\infty((0,T);H^2(\Omega))},T \Big),
        \end{aligned}
        \label{eq:teo corr3}
    \end{equation}
    where in this case we are considering the integrals respect to $z$ instead of $Z$.
\end{theorem}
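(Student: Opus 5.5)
The plan is to mirror the proof for $\theta_1^0$, working with the lifted function $w(t,x,Z)=\theta_2^0(t,x,Z)+e^{-Z}u_2^0(t,x,0)$, which solves \eqref{omega}. That system has homogeneous Dirichlet data at $Z=0$ and decays at infinity. It differs from \eqref{heat2} in two ways: the transport terms $(\theta_1^0+u_1^0(t,0))\partial_x w$ appear, and the right-hand side is $-\theta_1^0\partial_xu_2^0(t,\cdot,0)+H$. The transport coefficient is independent of $x$. Hence, after multiplying by any function of $w$ (or of $\partial_Z w$) and integrating over the periodic $x$-interval, the transport term integrates to zero. The estimates therefore reduce to the one-dimensional computations already done. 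The forcing terms are bounded in every $L^p(\Omega_\infty)$ uniformly in time by Theorem \ref{teo corr}, since $\theta_1^0$ has exponential-type decay with $\|\langle Z\rangle^l\theta_1^0\|_{L^\infty}\le C$. The same holds for the traces $u_2^0(t,\cdot,0)$ and $\partial_x u_2^0(t,\cdot,0)$ from the $H^2$ hypothesis.

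First, the $L^p$ bound. I will multiply \eqref{omega} by $|w|^{p-2}w$, and the transport term vanishes. Hölder and Grönwall then give $\|w\|_{L^\infty(0,T;L^p(\Omega_\infty))}\le C$, uniformly in $p$, exactly as in the proof of Theorem \ref{teo corr2}. Changing variables $z=\sqrt{\nu_3}Z$ gives $dz=\sqrt{\nu_3}\,dZ$, which produces the factor $\nu_3^{1/(2p)}$ for the cut-off version of $w$ on $(0,1)$. The term $e^{-Z}u_2^0(t,x,0)$ obeys the same scaling, so the triangle inequality gives the first line of \eqref{eq:teo corr3}. The case $p=\infty$ is Theorem \ref{teo corr2}.

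Second, the $L^2$ bound for $\partial_z\theta_2^0$. I will differentiate \eqref{omega} in $Z$. Since $\theta_1^0$ depends on $Z$, this produces the extra term $\partial_Z\theta_1^0\,\partial_x w$. I will test the resulting equation with $\partial_Z w$. The commutator term is controlled by $\|\partial_Z\theta_1^0\|_{L^\infty}\|\partial_x w\|_{L^2}\|\partial_Z w\|_{L^2}$, using the bounds on $\partial_x\theta_2^0$ from Theorem \ref{teo corr2}. For the $L^\infty$ bound on $\partial_Z\theta_1^0$ I will alternatively use parabolic smoothing in the explicit half-line problem, or integrate by parts in $Z$ to move the derivative onto $\partial_x w$ and use the bounds in $L^2_tL^2$ of $\partial_x\partial_Z w$ from \eqref{eq:teo corr2}.

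The boundary term $\partial_{ZZ}w\,\partial_Z w|_{Z=0}$ is the main obstacle. Restricting the equation to $Z=0$, where $w=\partial_x w=\partial_t w=0$, gives $\partial_{ZZ}w|_{Z=0}=\theta_1^0\partial_xu_2^0(t,x,0)-H|_{Z=0}$, which is bounded. The remaining trace $\partial_Z w|_{Z=0}$ I will handle by the one-dimensional trace inequality $|\partial_Z w(0)|^2\le 2\|\partial_Z w\|_{L^2}\|\partial_{ZZ}w\|_{L^2}$ in $Z$ for each $x$, integrated over $x$ and absorbed into the dissipation with Young's inequality. Grönwall then gives $\|\partial_Z w\|_{L^\infty(0,T;L^2(\Omega_\infty))}\le C$. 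Rescaling, with $\partial_z=\nu_3^{-1/2}\partial_Z$ and $dz=\nu_3^{1/2}dZ$, yields the factor $\nu_3^{-1/4}$.

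Third, the $L^1$ bound. I will repeat the regularized computation \eqref{l1} with $F_\lambda(\partial_Z w)$, integrated over $\Omega_\infty$. The transport term again integrates to zero in $x$. Convexity of $F_\lambda$ handles the diffusion term up to the boundary term, which is bounded because $|F'_\lambda|\le1$ and $\partial_{ZZ}w|_{Z=0}$ is bounded. The forcing contributions, including the commutator $\partial_Z\theta_1^0\,\partial_x w$, are bounded in $L^1$ by Cauchy–Schwarz with the weighted bounds from Theorems \ref{teo corr} and \ref{teo corr2}. Letting $\lambda\to0$ via monotone convergence gives $\|\partial_Z w\|_{L^\infty L^1}\le C$. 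The $L^1$ norm of $\partial_z$ is scale-invariant, so $\|\partial_z\theta_2^0\|_{L^\infty(0,T;L^1(0,1))}\le C$ follows once the contribution of $\partial_z(e^{-Z}u_2^0(t,x,0))$ is subtracted, and that contribution is also $O(1)$ in $L^1$.
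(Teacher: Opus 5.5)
Your proposal is correct and follows the route the paper indicates; the paper's only argument is to proceed as for $\theta_1^0$, i.e. lift to $w=\theta_2^0+e^{-Z}u_2^0(t,x,0)$, run the $L^p$, differentiated-$L^2$ and regularized-$L^1$ estimates, and rescale. You also supply the details the paper omits: the $x$-independent transport term integrates to zero, the commutator $\partial_Z\theta_1^0\,\partial_x w$ is controlled, and the boundary term is handled through the trace inequality $|\partial_Z w(0)|^2\le 2\|\partial_Z w\|\,\|\partial_{ZZ}w\|$, which repairs the paper's $\theta_1^0$ computation where $|\partial_Z w|_{Z=0}$ is implicitly bounded by $\|\partial_Z w\|_{L^2}$ alone. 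One small fix, shared with the paper, is needed in the $L^1$ step: use $F_\lambda-\lambda$ instead of $F_\lambda$, since $F_\lambda\ge\lambda$ makes $\int_{\Omega_\infty}F_\lambda(\partial_Z w)$ infinite.
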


Using the fact that the radial coordinate $r$ is bounded above and below in annular domains, we can establish equivalent estimates for the correctors in the case of pipe parallel flows. For brevity, we omit the details.

We conclude this appendix recalling the statement of the Anisotropic Sobolev Lemma, which is  used throughout the paper.

\begin{theorem}
    \label{th:ASL}
    For all $u \in H^1_0(\Omega)$, it holds that
    \begin{equation*}
        \lVert u \rVert_{L^\infty(\Omega)}\leq C(\lVert u\rVert_{L^2(\Omega)}^\frac{1}{2}\lVert \partial_z u\rVert_{L^2(\Omega)}^\frac{1}{2}+\lVert \partial_z u\rVert_{L^2(\Omega)}^\frac{1}{2}\lVert \partial_x u\rVert_{L^2(\Omega)}^\frac{1}{2}+\lVert u\rVert_{L^2(\Omega)}^\frac{1}{2}\lVert \partial_x\partial_z u\rVert_{L^2(\Omega)}^\frac{1}{2}).
    \end{equation*}
\end{theorem}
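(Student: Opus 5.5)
The approach is to combine two one-dimensional Agmon-type inequalities: first in the normal variable $z$, using the vanishing trace at $z=0$, and then in the periodic tangential variable $x$. By density, I would first prove the estimate for smooth $u$ that are $L$-periodic in $x$ and vanish at $z=0,1$. The general case (whenever the right-hand side is finite) then follows by approximation, since both sides are continuous with respect to the norms appearing on the right.

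\emph{Step 1 (normal direction).} Fix $x\in[0,L]$. Since $u(x,0)=0$, the fundamental theorem of calculus gives
\[
|u(x,z)|^2=\int_0^z \partial_z\big(|u(x,z')|^2\big)\,dz'\leq 2\int_0^1 |u(x,z')|\,|\partial_z u(x,z')|\,dz'=:2g(x)
\]
for every $z\in[0,1]$. Hence $\|u\|_{L^\infty(\Omega)}^2\leq 2\sup_{x} g(x)$, and it remains to bound $\sup_x g$.

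\emph{Step 2 (tangential direction).} The function $g$ is $L$-periodic and absolutely continuous in $x$. This holds because $|u|$ is Lipschitz in $x$ with $|\partial_x|u||\leq|\partial_x u|$ a.e., and similarly $|\partial_x|\partial_z u||\leq|\partial_{xz}u|$. Therefore, for any $x,y$,
\[
g(x)\leq g(y)+\int_0^L|\partial_x g|\,dx'.
\]
Averaging in $y$ gives
\[
\sup_x g\leq \frac1L\int_0^L g\,dy+\int_0^L\!\!\int_0^1\big(|\partial_x u|\,|\partial_z u|+|u|\,|\partial_{xz}u|\big)\,dz\,dx .
\]
Applying Cauchy--Schwarz to each term yields
\[
\sup_x g\leq \tfrac1L\|u\|_{L^2}\|\partial_z u\|_{L^2}+\|\partial_x u\|_{L^2}\|\partial_z u\|_{L^2}+\|u\|_{L^2}\|\partial_{xz}u\|_{L^2}.
\]

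\emph{Step 3 (conclusion).} Taking square roots and using $\sqrt{a+b+c}\leq\sqrt a+\sqrt b+\sqrt c$ gives exactly the three products of square roots in Theorem \ref{th:ASL}, with $C$ depending only on $L$.

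The only delicate point is Step 2: justifying that $g$ is differentiable in $x$ almost everywhere with the stated pointwise bound on $|\partial_x g|$, given the absolute values inside the integral. I would handle this by working with smooth $u$ and differentiating the regularized quantity $\sqrt{\lambda^2+u^2}\,\sqrt{\lambda^2+(\partial_z u)^2}$, as was done with $F_\lambda$ earlier in this appendix, and then letting $\lambda\to0$ by dominated convergence.
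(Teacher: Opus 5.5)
Your argument is correct. The paper gives no proof of this lemma; it only recalls the statement at the end of the appendix as a known tool, so your proposal fills a gap rather than reproducing an argument from the paper.

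It is still worth comparing your route with the other common one, which applies the one-dimensional Agmon inequality in the opposite order. For fixed $z$, averaging $|u(x,z)|^2\le|u(y,z)|^2+2\int_0^L|u|\,|\partial_xu|\,dx'$ over $y$ gives
\[
\sup_x|u(x,z)|^2\le \tfrac1L\|u(\cdot,z)\|_{L^2_x}^2+2\|u(\cdot,z)\|_{L^2_x}\|\partial_xu(\cdot,z)\|_{L^2_x}.
\]
Since $u$ and $\partial_xu$ vanish at $z=0$, one also has $\sup_z\|u(\cdot,z)\|_{L^2_x}^2\le2\|u\|_{L^2}\|\partial_zu\|_{L^2}$ and $\sup_z\|\partial_xu(\cdot,z)\|_{L^2_x}^2\le2\|\partial_xu\|_{L^2}\|\partial_{xz}u\|_{L^2}$. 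Together these yield the sharper product form
\[
\|u\|_{L^\infty}\le C\big(\|u\|^{1/2}\|\partial_zu\|^{1/2}+\|u\|^{1/4}\|\partial_zu\|^{1/4}\|\partial_xu\|^{1/4}\|\partial_{xz}u\|^{1/4}\big).
\]
The three-term version in the statement follows from $\sqrt{ab}\le\frac12(a+b)$ with $a=\|u\|\,\|\partial_{xz}u\|$ and $b=\|\partial_xu\|\,\|\partial_zu\|$. Your route integrates the density $|u|\,|\partial_zu|$ in $z$ first and then controls its supremum in $x$ by the fundamental theorem of calculus. It lands directly on the three-term form the paper actually uses. The price is that you obtain the sum of the two cross terms rather than their geometric mean, which is all the paper needs.

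Two small technical comments.
\begin{enumerate}
\item The $F_\lambda$ regularization you mention is unnecessary. For smooth $u$, the inequality $\big||a|-|b|\big|\le|a-b|$ bounds the $x$-difference quotients of $|u|\,|\partial_zu|$ by those of $u$ and $\partial_zu$, which converge uniformly. Hence $g$ is Lipschitz and $|g'|$ obeys your bound wherever $g$ is differentiable.
\item In the density step, the left-hand side is not a priori continuous in the norms on the right. Argue instead via a subsequence converging a.e., so that $\|u\|_{L^\infty}\le\liminf_n\|u_n\|_{L^\infty}$. The approximants must also converge in $\partial_x\partial_z$. Truncations of the expansion in $e^{2\pi i m x/L}\sin(k\pi z)$ work: $\partial_x$ acts diagonally, and, thanks to the zero trace, $\partial_z$ maps the sine expansion of $u$ to the cosine expansion of $\partial_zu$.
\end{enumerate}
The statement is only non-trivial when $\partial_x\partial_zu\in L^2$.
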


\vspace{0.2in}

\printbibliography

\vspace{0.2in}

\end{document}